\newtheorem{theorem}{Theorem}[section]
\newtheorem{lemma}[theorem]{Lemma}
\newtheorem{assumption}{Assumption}
\theoremstyle{definition}
\numberwithin{equation}{section}
\renewcommand{\labelenumi}{\roman{enumi}}
\renewcommand\theenumi\labelenumi
\renewcommand{\leq}{\leqslant}
\renewcommand{\le}{\leqslant}
\renewcommand{\geq}{\geqslant}
\renewcommand{\ge}{\geqslant}
\newcommand{\R}{\mathbb{R}}
\newcommand{\E}{\mathbb{E}}
\newcommand{\W}{\mathbb{W}}
\newcommand{\LL}{\mathcal{L}}
\newcommand{\PP}{\mathbb{P}}
\newcommand{\dtv}{d_{\mathrm{TV}}}
\newcommand{\dif}{\mathrm{d}}
\newcommand{\eup}{\mathrm{e}}
\newcommand*\abs[1]{\left\lvert#1\right\rvert}
\newcommand*\norm[1]{\left\lVert#1\right\rVert}
\newcommand*\opnorm[1]{\left\lVert#1\right\rVert_{\textup{op}}}
\newcommand*\oinorm[1]{\left\lVert#1\right\rVert_{\textup{op},\infty}}
\newcommand*\sca[1]{\left\langle#1\right\rangle}
\title{tamed Euler-Maruyama Method for SDEs with Non-globally Lipschitz Drift and multiplicative Noise}
\author{Xiang Li \textsuperscript{1,3}}
\author{Yingjun Mo \textsuperscript{1,3}}
\author{Haoran Yang \textsuperscript{2}}
\address{\textsuperscript{1} Department of Mathematics, Faculty of Science and Technology, University of Macau, Macau, 999078, China }
\address{\textsuperscript{2} School of Mathematical Sciences, Peking University, Beijing, 100871, China  }
\address{\textsuperscript{3} Zhuhai UM Science, Technology Research Institute, Zhuhai, 519031, China}
\begin{document}

\begin{abstract}
Consider the following stochastic differential equation driven
by multiplicative noise on $\mathbb{R}^d$ with a superlinearly growing drift coefficient,
\begin{align*}
    \mathrm{d} X_t = b (X_t) \, \mathrm{d} t + \sigma (X_t) \, \mathrm{d} B_t.
\end{align*}
It is known that the corresponding explicit Euler schemes may not converge. In this article, we analyze an explicit and easily implementable numerical method for approximating such a stochastic differential equation, i.e.\ its tamed Euler-Maruyama approximation. Under partial dissipation conditions ensuring the ergodicity, we obtain the uniform-in-time convergence rates of the tamed Euler-Maruyama process under $L^{1}$-Wasserstein distance and total variation distance. 

\noindent
\textbf{Keywords:} SDEs with polynomially growing drift, tamed Euler-Maruyama scheme with decreasing step, Wasserstein distance, total variation distance, convergence rate
\end{abstract}

\maketitle

\section{Introduction and Main Results}{\label{Results}}

Consider the following stochastic differential equation (SDE) on $\R^d$:
\begin{align} \label{SDE}
    \dif X_t = b (X_t) \, \dif t + \sigma (X_t) \, \dif B_t, \quad X_0=x_0,
\end{align}
where $b \colon \mathbb{R}^d \rightarrow \mathbb{R}^d$ is a function satisfying polynomial growth, $\sigma \colon \R^d \to \R^{d \times d}$, and $(B_t)_{t \geq 0}$ denotes the $d$-dimensional Brownian motion in a probability space $\left(\Omega, \mathscr{F},\left( \mathscr{F}_t \right)_{t \geqslant 0}, \mathbb{P}\right)$.

It is well-known that the corresponding explicit Euler-Maruyama (EM) schemes of SDEs \eqref{SDE} may not converge with respect to $L^{1}$-Wasserstein distance 
when the drift coefficients are allowed to grow super-linearly; see, for example, \cite[Theorem 2.1]{hutzenthaler2011strong}. As a consequence, many modified EM schemes have been introduced for such SDEs over the past decade, including tamed EM schemes \cite{Sabanis2013EulerAW,sabanis2013note}, adaptive EM  schemes \cite{do2024tamed,fang2017adaptive,giles2020adaptive,lamba2007adaptive}, truncated EM schemes \cite{mao2015truncated,song2022strong}, and implicit EM schemes \cite{mao2013strong}.

We consider a tamed Euler-Maruyama approximation based on Newton method  to numerically approximate SDE \eqref{SDE}: 
\begin{align} \label{EM1}
    Y_{t_{n+1}} = Y_{t_n} +  \frac{b (Y_{t_n})}{1 + \eta_{n+1}^\alpha \opnorm{\nabla b (Y_{t_n})}}\eta_{n+1} + \sigma (Y_{t_n}) (B_{t_{n+1}} - B_{t_n}), \quad n \geq 0,
\end{align}
with $Y_0 = X_0=x_0$, where $\alpha \in (0, 1 / 2)$ is a constant, $\opnorm{\cdot}$ denotes the operator norm, $\{ \eta_n \}_{n \geq 1}$ is a sequence of step sizes, $t_0 := 0$, and $t_n := \sum_{k = 1}^n \eta_k$. The associated continuous time Euler-Maruyama Scheme  of \eqref{EM1} is defined as
\begin{align} \label{EM2}
    \dif Y_t = \frac{b (Y_{t_n})}{1 + \eta_{n+1}^\alpha \opnorm{\nabla b (Y_{t_n})}} \, \dif t + \sigma (Y_{t_n}) \, \dif B_t, \quad t \in [t_n, t_{n+1}], \quad n \geq 0.
\end{align}

In this paper, we aim to study the convergence rate of the tamed Euler-Maruyama process  \eqref{EM1} for large time under $L^{1}$-Wasserstein distance and total variation distance, i.e.
$$
\mathbb{W}_1\left(\LL (X_{t_n} ), \LL (Y_{t_n} )\right), \; \dtv \left(\LL (X_{t_n} ), \LL (Y_{t_n} )\right) \rightarrow 0 \text {  as  } n \rightarrow \infty,
$$
where $\LL(\xi)$ is the distribution of a random variable $\xi$. For $\Pi\left(\mu, \nu\right)$ being the class of all couplings of probability measures $\mu, \nu$ on $\mathbb{R}^{d}$, the $L^{1}$-Wasserstein distance is defined as
\begin{align*}
    \W_{1}(\mu,\nu):=\inf_{\pi\in \Pi (\mu,\nu)} \left\{ \int_{\mathbb{R}^{d}\times \mathbb{R}^{d}}\abs{x-y} \pi(\mathrm{d}x,\mathrm{d}y) \right\},
\end{align*}
while the total variation distance between them is given by 
\begin{align*}
    \dtv(\mu,\nu):=\inf_{\pi\in \Pi (\mu,\nu)} \left\{ \int_{\mathbb{R}^{d}\times \mathbb{R}^{d}} \mathbf{1}_{\{x\neq y\}} \pi(\mathrm{d}x,\mathrm{d}y) \right\}.
\end{align*}
It is well known that, by Kantorovich-Rubinstein theorem \cite{Villani2003},
\begin{align*}
    \W_{1}(\mu,\nu)=\sup_{f\in \mathrm{Lip}(1)} \abs{ \int_{\R^d} f(x) \mu (\dif x) - \int_{\R^d} f(x) \nu (\dif x)},
\end{align*}
and
\begin{align} \label{e:KR-TV}
    \dtv(\mu,\nu)= \frac{1}{2} \sup_{f\in \mathcal{B}_{b}(\R^d), \, \norm{f}_\infty \leq 1} \abs{ \int_{\R^d} f(x) \mu (\dif x) - \int_{\R^d} f(x) \nu (\dif x)},
\end{align}
where $\mathrm{Lip}(1)=\left\{h: \mathbb{R}^d \rightarrow \mathbb{R} ;|h(y)-h(x)| \leq|y-x|\right\}$.

This paper uses tamed Euler-Maruyama approximation to approximate the SDEs with non-globally Lipschitz drift for large time under the $L^{1}$-Wasserstein distance and total variation distance. As we know, \cite[Theorem 2]{Sabanis2013EulerAW} shows that explicit schemes \eqref{EM1} converge in $L^p$ to the solution of the corresponding SDEs \eqref{SDE} in finite time, where the value of $p$ is related to the order of the drift term. In contrast, our paper analyzes the long-term behavior of scheme \eqref{EM1}, and the scheme is applicable to more general variable step sizes. The core methods of this paper are domino decomposition and Malliavin analysis methods.

The paper is organized as follows. In the rest of Section \ref{Results}, under certain assumptions, we provide the estimates for $\mathbb{W}_1 (\LL (X_{t_n}), \LL (Y_{t_n}) )$ and $\dtv (\LL (X_{t_n}), \LL (Y_{t_n}) )$. In Section \ref{proof1}, we present the lemmas required in the proof of the main theorem, including gradient estimates, moment estimates, and one step error estimates. In Section \ref{proof}, we present the proof of the main theorem. In the appendix, we provide proofs for some technical lemmas in Section \ref{proof1} and \ref{proof}.

\subsection{Notations} Throughout the paper, $\R^d$ denotes the $d$-dimensional Euclidean space, with norm $\abs{\cdot}$ and scalar product $\langle \cdot, \cdot \rangle$. The open ball centered at $x \in \R^d$ with a radius of $R > 0$ is denoted by $B (x, R) = \{ y \in \R^d \colon \abs{y - x} < R \}$.
For $q, s \in \R$, we denote $q \lor s = \max \{q, s\}$ and $q \land s = \min \{q, s\}$. 

The operator norm of a tensor $A = (a_{i_1 \cdots i_\kappa})_{i_1, \dots, i_\kappa = 1}^d \in \R^{d^{\otimes \kappa}}$, $\kappa = 1, 2, \dots$ is denoted by
\begin{align*}
    \opnorm{A} := \sup \left\{ \sum_{i_1, \dots, i_\kappa = 1}^d a_{i_1 \cdots i_\kappa} v_{i_1}^{(1)} \cdots v_{i_\kappa}^{(\kappa)} \colon v^{(1)}, \dots, v^{(\kappa)} \in \R^d, \, \abs{v^{(1)}} =  \dots = \abs{v^{(\kappa)}} = 1 \right\}.
\end{align*}

For $\kappa, r = 1, 2, \dots$, the set of bounded measurable tensor-valued functions $f \colon \mathbb{R}^{d} \to \R^{d^{\otimes \kappa}}$ is denoted by $\mathcal{B}_{b} (\R^d; \R^{d^{\otimes \kappa}})$, and the set of functions with $r$-th continuously differentiable components is denoted by $\mathcal{C}^r (\R^d; \R^{d^{\otimes \kappa}})$. Given $f = (f_{i_1 \cdots i_\kappa})_{i_1, \dots, i_\kappa = 1}^d \in \mathcal{C}^1 (\R^d; \R^{d^{\otimes \kappa}})$ and $v \in \R^d$, we denote
\begin{align*}
    \nabla_v f \colon \mathbb{R}^{d} &\longrightarrow \phantom{xxxxx} \R^{d^{\otimes \kappa}}, \\
    x \phantom{x} &\longmapsto ( \sca{\nabla f_{i_1 \cdots i_\kappa} (x), v} )_{i_1, \dots, i_\kappa = 1}^d.
\end{align*}
For $f \in \mathcal{C}^r (\R^d; \R^{d^{\otimes \kappa}})$, we further denote
\begin{align*}
    \oinorm{\nabla^r f} := \sup \left\{ \opnorm{\nabla_{v_1} \dots \nabla_{v_r} f (x)} \colon x, v_1, \dots, v_r \in \R^d; \; \abs{v_1}, \dots, \abs{v_r} \leq 1 \right\},
\end{align*}
and
\begin{align*}
    \mathcal{C}_{b}^r (\R^d; \R^{d^{\otimes \kappa}}) := \left\{ f \in \mathcal{C}^r (\R^d; \R^{d^{\otimes \kappa}}) \colon \oinorm{f}, \oinorm{\nabla f}, \dots, \oinorm{\nabla^r f} < + \infty \right\}.
\end{align*}
Especially, $\oinorm{f} := \sup \{ \opnorm{f (x)} \colon x \in \R^d \}$ for function $f \colon \R^d \to \R^{d^{\otimes \kappa}}$, and $\mathcal{B}_{b} (\R^d) = \mathcal{B}_{b} (\R^d; \R)$, $\mathcal{C}^r (\R^d) = \mathcal{C}^r (\R^d; \R)$, $\mathcal{C}_{b}^r (\R^d) = \mathcal{C}_{b}^r (\R^d; \R)$ for $\kappa = 0$. 

Whenever we want to emphasize the starting point $X_0=x$ for a given $x \in \mathbb{R}^d$, we will write $X_t^x$ instead of $X_t$; we use this also for $Y_k^y$ for a given $y \in \mathbb{R}^d$. Unless otherwise specified, the initial point of $X_t$ and $Y_k$ is assumed to be $x_0$. 

By $P_t, Q_k$ we denote the Markov semigroups of $X_t, Y_k$, respectively, i.e.
$$
P_t f(x)=P_{0,t}f(x)=\mathbb{E} f\left(X_t^{x}\right), \quad Q_k f({x})=Q_{0,k} f({x})=\mathbb{E} f\left(Y_k^{x}\right),
$$
for measurable function $f \colon \mathbb{R}^d \rightarrow \mathbb{R}$ belongs to the domain of $P_{t}$ and $Q_{k}$, $x \in \mathbb{R}^d, t \geq 0$, and $k=0,1,2, \cdots$.

Finally, we remark that $C$ denotes a positive constant which may be different even in a single chain of inequalities.

\subsection{Assumptions and main Results}
Throughout this paper, we introduce the following assumptions.
\begin{assumption} \label{A1}
    Assume $b \in \mathcal{C}^1 (\R^d; \R^d)$, and there exist constants $r \geq 0$ and $L_1, \lambda > 0$ such that for any $x, y \in \R^d$,
    \begin{gather}
        \sca{x, b (x)} \leq L_1 - \lambda \abs{x}^{r+2}, \label{eq:A11} \\
        \abs{b (x)} \leq L_1 (1 + \abs{x} \opnorm{\nabla b (x)}), \label{eq:A12} \\
        \abs{b (x) - b (y)} \leq L_1 \left(1 + \abs{x}^r + \abs{y}^r \right) \abs{x - y}. \label{eq:A13}
    \end{gather}
\end{assumption}

\begin{assumption} \label{A2}
    Assume $\sigma \in \mathcal{C}^2 (\R^d; \R^{d \times d})$, and there exists a constant $L_2$ such that
    \begin{align*}
        \oinorm{\sigma} \lor \oinorm{\sigma^{-1}} \leq L_2, \qquad
        \oinorm{\nabla \sigma} \leq L_2, \qquad
        \oinorm{\nabla^2 \sigma} \leq L_2.
    \end{align*}
\end{assumption}

According to \eqref{eq:A13}, we have $\opnorm{\nabla b (x)} \leq 2 L_1 (1 + \abs{x}^r)$, $\forall x \in \R^d$. Since $\nabla (\sigma^{-1}) = - \sigma^{-1} (\nabla \sigma) \sigma^{-1}$, Assumption \ref{A2} implies that $\oinorm{\nabla (\sigma^{-1})} \leq L_2^3$. 

Under the above assumptions, the SDE \eqref{SDE} is known to have a unique strong solution; see, for example, \cite[Theorem 3.3.1]{prevot2007concise}. 

In practical applications, the step size typically varies with each iteration. To control its behavior, an additional assumption on $\eta_n$ is necessary.
\begin{assumption} \label{A3}
    The sequence of step sizes $\{ \eta_n \}_{n \geq 1}$ is a non-increasing and positive sequence satisfying the following conditions:
    \begin{align*}
        \lim_{n \to \infty} \eta_n = 0, \quad
        \sum_{n = 1}^\infty \eta_n = + \infty, \quad
        \text{and}\quad\eta_{n-1} - \eta_n \leq \theta \eta_n^2, \quad \forall n \geq 2,
    \end{align*}
    for some $\theta > 0$.
\end{assumption}
A typical example is $\eta_n=\eta / n^\gamma$ for some constants $\eta>0$ and $\gamma \in(0,1]$.

Under Assumptions \ref{A1}, \ref{A2} and \ref{A3}, we establish Theorem \ref{thm1} and \ref{thm2}, which show the convergence rate of the tamed EM scheme \eqref{EM1} for large time under the $L^{1}$-Wasserstein distance and the total variation distance. The proofs will be given in Section \ref{proof}.
\begin{theorem} \label{thm1}
    Let $\left(X_t\right)_{t \geq 0}$ and $\left({Y}_k\right)_{k \geq 0}$ be defined by \eqref{SDE} and \eqref{EM1}. Suppose Assumption \ref{A1}, \ref{A2}, and \ref{A3} hold with $\eta_1 \leq \eta$ and $\theta \le  \theta_0$, and $b \in \mathcal{C}^2 (\R^d; \R^d)$ satisfies
    \begin{align*}
        \opnorm{\nabla^2 b (x)} \leq L_1 (1 + \abs{x}^r), \quad \forall x \in \R^d.
    \end{align*}
    Then for any $\alpha \in (0, 1 / 2)$, there exists a constant $C > 0$ such that,  
    \begin{align*}
        \W_1 (\LL (X_{t_n}), \LL (Y_{t_n})) &\leq C \eta_n^\alpha, \quad \forall n \geq 1, \\
        \dtv (\LL (X_{t_n}), \LL (Y_{t_n})) &\leq C \eta_n^\alpha, \quad \forall n \geq 1,
    \end{align*}
    where $C$, $\eta>0$, and $\theta_0>0$ only depend on $x$, $d$, $r$, $L_1$, $L_2$, and $\alpha$.
\end{theorem}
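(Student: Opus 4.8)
The plan is to combine a contraction--regularisation package for the true semigroup $P_t$ with a one-step error analysis organised by a telescoping (``domino'') decomposition, and then to sum the one-step errors using the step constraint in Assumption \ref{A3}; both distances are treated by the same scheme, up to a separate handling of the steps close to $t_n$ in the total variation case. As preliminaries (this is the content of Section \ref{proof1}) I would record three ingredients. (a) Under Assumptions \ref{A1}--\ref{A2}, the dissipativity \eqref{eq:A11} together with the uniform ellipticity of $\sigma$ give an exponential Wasserstein contraction $\W_1(\delta_x P_t,\delta_y P_t)\leq C\eup^{-ct}\abs{x-y}$, by a reflection-type coupling. (b) Both $(X_t)$ and the tamed scheme $(Y_k)$ have all polynomial moments bounded uniformly in time; for $(Y_k)$ this is where taming and the smallness of $\eta_1$ enter, since dividing the drift by $1+\eta_{n+1}^\alpha\opnorm{\nabla b}$ keeps the increments small enough to avoid the moment blow-up of the plain Euler scheme while \eqref{eq:A11} still forces a Lyapunov decay (the parasitic terms being of order $\eta^{1-2\alpha}$, negligible as $\alpha<1/2$). (c) Bismut--Elworthy--Li / Malliavin integration by parts, using Assumption \ref{A2}, $b,\sigma\in\mathcal C^2$ and (a), give the smoothing bounds $\oinorm{\nabla P_t f}\leq C(1\wedge t^{-1/2})\eup^{-ct}\oinorm{f}$ and $\oinorm{\nabla^2 P_t f}\leq C(1\wedge t^{-1})\eup^{-ct}\oinorm{f}$ for bounded $f$, with one power less singularity when $f$ is Lipschitz; the first of these also yields $\dtv(\mu P_t,\nu P_t)\leq C(1\wedge t^{-1/2})\eup^{-ct}\W_1(\mu,\nu)$.

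Next I would set up the domino decomposition. Fix $n$ and a test function $f$ (in $\mathrm{Lip}(1)$ for $\W_1$, bounded by $1$ for $\dtv$), and put $u(t,x):=P_{t_n-t}f(x)$, so that $\partial_t u+\mathcal A u=0$ with $\mathcal A$ the generator of \eqref{SDE} and $u(t_n,\cdot)=f$. Since $X_0=Y_0=x_0$,
\begin{align*}
    \E f(X_{t_n})-\E f(Y_{t_n}) = -\sum_{k=0}^{n-1}\E\bigl[u(t_{k+1},Y_{t_{k+1}})-u(t_k,Y_{t_k})\bigr],
\end{align*}
and It\^o's formula for $s\mapsto u(s,Y_s)$ on $[t_k,t_{k+1}]$ along \eqref{EM2}, together with the Kolmogorov equation, rewrites each summand as
\begin{align*}
    \E\int_{t_k}^{t_{k+1}}\Bigl\{&\bigl[\widetilde b(Y_{t_k})-b(Y_s)\bigr]\cdot\nabla u(s,Y_s) \\
    &+\tfrac12\bigl[(\sigma\sigma^{\top})(Y_{t_k})-(\sigma\sigma^{\top})(Y_s)\bigr]:\nabla^2 u(s,Y_s)\Bigr\}\,\dif s,
\end{align*}
where $\widetilde b(x)=b(x)/(1+\eta_{k+1}^\alpha\opnorm{\nabla b(x)})$. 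The taming part obeys $\abs{\widetilde b(x)-b(x)}\leq\eta_{k+1}^\alpha\opnorm{\nabla b(x)}\abs{b(x)}\leq C\eta_{k+1}^\alpha(1+\abs{x}^{2r+1})$ by \eqref{eq:A12}--\eqref{eq:A13}; the ``freezing'' parts are handled by conditioning on $\mathscr{F}_{t_k}$ and writing $\nabla u(s,Y_s)=\nabla u(t_k,Y_{t_k})+\text{(increment)}$, so that the $\mathscr{F}_{t_k}$-measurable leading term is $O(s-t_k)$ (here the hypothesis on $\nabla^2 b$ is used) and the fluctuation is $O(\sqrt{s-t_k})$. Inserting the moment bounds and the semigroup estimates, the whole thing is dominated by $\sum_{k=0}^{n-1}C\eta_{k+1}^\alpha\int_{t_k}^{t_{k+1}}(1\vee(t_n-s)^{-1/2})\eup^{-c(t_n-s)}\,\dif s$ for $\W_1$, and by an analogous but more singular sum for $\dtv$.

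A step lemma based on $\eta_{n-1}-\eta_n\leq\theta\eta_n^2$ — which implies $\eta_k\leq\eta_n\eup^{\theta(t_n-t_k)}$ — then shows that, for $\theta\leq\theta_0$ with $\theta_0$ small relative to $c$,
\begin{align*}
    \sum_{k=0}^{n-1}\eta_{k+1}^\alpha\int_{t_k}^{t_{k+1}}\bigl(1\vee(t_n-s)^{-1/2}\bigr)\eup^{-c(t_n-s)}\,\dif s \leq C\eta_n^\alpha\int_0^\infty\bigl(1\vee r^{-1/2}\bigr)\eup^{-c'r}\,\dif r = C\eta_n^\alpha,
\end{align*}
which gives the $\W_1$ estimate. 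Equivalently one may phrase this as the recursion $\W_1(\LL(X_{t_n}),\LL(Y_{t_n}))\leq(1-c\eta_n)\W_1(\LL(X_{t_{n-1}}),\LL(Y_{t_{n-1}}))+C\eta_n^{1+\alpha}$ and close an induction on $\W_1(\LL(X_{t_n}),\LL(Y_{t_n}))\leq A\eta_n^\alpha$ using $\abs{\eta_{n-1}^\alpha-\eta_n^\alpha}\leq\alpha\theta\eta_n^{1+\alpha}$; this is exactly where $\eta_1\leq\eta$ and $\theta\leq\theta_0$ enter, and why $C$ depends on $x_0$ through the base case.

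Finally, the total variation bound: the same decomposition gives it for the steps with $t_n-t_{k+1}$ bounded away from $0$, where the $(t_n-s)^{-1}$ coming from $\nabla^2 u$ is harmless. \textbf{The main obstacle is the final block of about $\eta_n^{-1}$ steps}, where that factor is no longer integrable. I would treat the very last step by a direct one-step total variation comparison — the scheme transition is the Gaussian $N(x+\widetilde b(x)\eta_n,(\sigma\sigma^{\top})(x)\eta_n)$ and the exact one a short-time perturbation of $N(x+b(x)\eta_n,(\sigma\sigma^{\top})(x)\eta_n)$, so the taming shift of size $\eta_n^{1+\alpha}$ in the mean yields $\dtv(\delta_xP_{\eta_n},\delta_xQ_{n-1,n})\leq C\eta_n^{1/2+\alpha}(1+\abs{x}^p)$. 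For the intermediate near-$t_n$ steps I would keep the It\^o expansion but exploit that the taming term is paired only with $\nabla u$, whose $(t_n-s)^{-1/2}$ singularity is integrable and telescopes to $O(\eta_n^\alpha)$ over those steps, whereas the freezing terms paired with $\nabla^2 u$ carry the extra factor $s-t_k$ (or $\sqrt{s-t_k}$) after conditioning, so their sums are $O(\eta_n\log(1/\eta_n))$ and $O(\sqrt{\eta_n})$, both $o(\eta_n^\alpha)$ precisely because $\alpha<1/2$; the far steps are absorbed through $\dtv(\LL(X_{t_n}),\LL(Y_{t_n}))\leq\dtv(\LL(X_{t_{n-M}})P_{t_{n-M},t_n},\LL(Y_{t_{n-M}})P_{t_{n-M},t_n})+(\text{last }M\text{ steps})$ with $t_n-t_{n-M}$ bounded, the first term being $\leq C\eup^{-c(t_n-t_{n-M})}\W_1(\LL(X_{t_{n-M}}),\LL(Y_{t_{n-M}}))\leq C\eta_n^\alpha$ by the already proven $\W_1$ bound. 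Collecting the pieces gives $\dtv(\LL(X_{t_n}),\LL(Y_{t_n}))\leq C\eta_n^\alpha$.
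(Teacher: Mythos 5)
Your proposal follows the same overall strategy as the paper — a telescoping (``domino'') decomposition into one-step errors, moment estimates for the scheme and the diffusion, Bismut--Elworthy--Li smoothing bounds for $P_t$, and a step lemma exploiting $\eta_{n-1}-\eta_n\le\theta\eta_n^2$ to sum the errors to $O(\eta_n^\alpha)$. The paper's decomposition $\sum_k Q_{0,t_{k-1}}(P_{t_{k-1},t_k}-Q_{t_{k-1},t_k})P_{t_k,t_n}f(x_0)$ and your It\^o/Kolmogorov version with $u(t,\cdot)=P_{t_n-t}f$ are equivalent, and your observation that conditioning on $\mathscr F_{t_k}$ pushes the leading one-step term to $O(\eta_k^{1+\alpha})$ (using $\nabla^2 b$) is exactly how the paper bounds $\abs{\E\Delta_{t_k}^x}$.

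There are, however, two points where you diverge from the paper in ways you should reconcile. First, a genuine gap: under a superlinearly growing drift the BEL-based gradient estimates are \emph{not} uniform in $x$. Because $\E\abs{R_t^v}^p$ carries an $\eup^{Ct}V(x)$ factor (and $\E\abs{K_t^{v,w}}^p$ a $V(x)$ factor), the paper's Lemma~\ref{le:gradient} reads $\opnorm{\nabla P_tf(x)}\le \frac{C\eup^{-ct}}{\sqrt{t\wedge1}}V(x)\norm{f}_\infty$ and $\opnorm{\nabla^2P_tf(x)}\le\frac{C\eup^{-ct}}{t\wedge1}V(x)^{3/2}\norm{f}_\infty$, not $\oinorm{\nabla P_tf}\le C(1\wedge t^{-1/2})\eup^{-ct}\oinorm f$. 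These weights must be carried through and closed against the exponential moment bounds on $Y_{t_{k-1}}$ (as the paper does with $\E[V(Y_{t_{k-1}})^2]<\infty$); in particular your claimed regularization $\dtv(\mu P_t,\nu P_t)\le C(1\wedge t^{-1/2})\eup^{-ct}\W_1(\mu,\nu)$ does not follow without the weight and would need reworking (e.g.\ via a weighted Wasserstein distance or by first applying Cauchy--Schwarz against $V$-moments). Second, you propose reflection coupling for the long-time contraction and a separate ``far block / near block / very last step'' split for total variation via a $\W_1\!\to\dtv$ regularizing step; the paper instead derives the exponential factor from the Lyapunov function together with strong Feller and irreducibility via Bao--Barbu--type ergodicity, and then treats $\W_1$ and $\dtv$ \emph{simultaneously} by proving a single bound $\abs{\E f(X_{t_n})-\E f(Y_{t_n})}\le C\eta_n^\alpha(\norm f_\infty\wedge\oinorm{\nabla f})$, so the $(t_n-t_k)^{-1}$ singularity from $\nabla^2$ is absorbed by the $\eta_k^{3/2}$ one-step remainder and summed to $O(\eta_n^{1/2}\abs{\ln\eta_n})$ without any block split. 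Finally, your direct Gaussian comparison for the last step overstates the gain: the exact kernel $\delta_xP_{\eta_n}$ differs from the scheme's Gaussian by $O(\sqrt{\eta_n})$ independently of the taming shift, so the last-step $\dtv$ bound is $O(\sqrt{\eta_n})$ (as the paper obtains via Duhamel in Lemma~\ref{le:laststep}), not $O(\eta_n^{1/2+\alpha})$; this is immaterial for the conclusion since $\sqrt{\eta_n}\le\eta_n^\alpha$ when $\alpha<1/2$.
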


For the case $\sigma \equiv \sigma_0 \in \R^{d \times d}$, we have the following conclusion.
\begin{theorem}[Additive case] \label{thm2}
    Let $\left(X_t\right)_{t \geq 0}$ and $\left({Y}_k\right)_{k \geq 0}$ be defined by \eqref{SDE} and \eqref{EM1}. Suppose Assumption \ref{A1}, \ref{A2}, and \ref{A3} hold with $\sigma \equiv \sigma_0 \in \R^{d \times d}$, $\eta_1 \leq \eta$ and $\theta = \theta_0$. 
    
    Then for any $\alpha \in (0, 1 / 2)$, there exists a constant  $C > 0$ such that,  
    \begin{align*}
        \W_1 (\LL (X_{t_n}), \LL (Y_{t_n})) &\leq C \eta_n^\alpha, \quad \forall n \geq 1, \\
        \dtv (\LL (X_{t_n}), \LL (Y_{t_n})) &\leq C \eta_n^\alpha, \quad \forall n \geq 1,
    \end{align*}
    where $C$, $\eta>0$, and $\theta_0>0$ only depend on $x$, $d$, $r$, $L_1$, $L_2$, and $\alpha$.
\end{theorem}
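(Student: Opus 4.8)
The plan is to run the argument behind Theorem \ref{thm1}, the point of the additive, non-degenerate setting $\sigma\equiv\sigma_0$ being precisely that it renders the extra regularity $b\in\mathcal{C}^2$ required there unnecessary. The backbone is a \emph{domino (telescoping) decomposition}: writing $P_{s,t}$ for the transition operator of \eqref{SDE} from time $s$ to time $t$ and using $Q_{0,n}=Q_{n-1,n}\circ\cdots\circ Q_{0,1}$, for every bounded or Lipschitz $f$ one has
\begin{align*}
    \int_{\R^d} f\,\dif\LL(X_{t_n}) - \int_{\R^d} f\,\dif\LL(Y_{t_n}) = \sum_{k=0}^{n-1} \E\Big[ \big(P_{t_{k+1},t_n}f\big)\big(X_{t_{k+1}}^{Y_{t_k}}\big) - \big(P_{t_{k+1},t_n}f\big)\big(Y_{t_{k+1}}^{Y_{t_k}}\big) \Big],
\end{align*}
where in the $k$-th term $X^{Y_{t_k}}$ is the solution of \eqref{SDE} restarted from $Y_{t_k}$ at time $t_k$ and $Y^{Y_{t_k}}$ is one step of \eqref{EM1} (equivalently, \eqref{EM2} on $[t_k,t_{k+1}]$) from the same point. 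Combined with the Kantorovich--Rubinstein duality and the dual formula \eqref{e:KR-TV}, this reduces both distances to bounding a sum of \emph{one-step} errors, each read through the test function $g:=P_{t_{k+1},t_n}f$, i.e.\ after a remaining time $t_n-t_{k+1}$ of smoothing and contraction under the true dynamics.

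For the $L^1$-Wasserstein bound take $f\in\mathrm{Lip}(1)$. Since the noise is additive, $X_{t_{k+1}}^{y}-Y_{t_{k+1}}^{y}=\int_{t_k}^{t_{k+1}}\!\big(b(X_s^{y})-b(y)/(1+\eta_{k+1}^\alpha\opnorm{\nabla b(y)})\big)\,\dif s$ contains no stochastic integral, so the $k$-th term is at most $\oinorm{\nabla g}\,\E\abs{X_{t_{k+1}}^{Y_{t_k}}-Y_{t_{k+1}}^{Y_{t_k}}}$. The exponential ergodicity established in Section \ref{proof1} gives $\oinorm{\nabla g}\le C\eup^{-c(t_n-t_{k+1})}$ (up to a polynomial weight in the restart point that is absorbed later), while the drift difference splits into a freezing part $b(X_s^{y})-b(y)$, of size $O(\eta_{k+1})$ by \eqref{eq:A13} and $\E\abs{X_s^{y}-y}\lesssim\sqrt{\eta_{k+1}}$, and a taming part $b(y)\,\eta_{k+1}^\alpha\opnorm{\nabla b(y)}/(1+\eta_{k+1}^\alpha\opnorm{\nabla b(y)})$, of size $O(\eta_{k+1}^\alpha(1+\abs{y}^{q}))$ by \eqref{eq:A12} and $\opnorm{\nabla b(y)}\le 2L_1(1+\abs{y}^r)$. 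The one-step error lemma of Section \ref{proof1} together with the uniform-in-time moment bound $\sup_k\E\abs{Y_{t_k}}^p<\infty$ then gives $\E\abs{X_{t_{k+1}}^{Y_{t_k}}-Y_{t_{k+1}}^{Y_{t_k}}}\le C\eta_{k+1}^{1+\alpha}$, so the Wasserstein error is at most $C\sum_{k=0}^{n-1}\eup^{-c(t_n-t_{k+1})}\eta_{k+1}^{1+\alpha}$.

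For the total variation bound use \eqref{e:KR-TV} and the same decomposition, but treat the last summand $k=n-1$ (where $g=f$ is merely bounded) separately. Because $\sigma\equiv\sigma_0$ is constant and invertible, on $[t_{n-1},t_n]$ the processes $X^{y}$ and $Y^{y}$ are diffusions with the same diffusion coefficient started at $y$, so Girsanov's theorem bounds the relative entropy of their laws by $\tfrac12\E\int_{t_{n-1}}^{t_n}\abs{\sigma_0^{-1}\big(b(X_s^{y})-b(y)/(1+\eta_n^\alpha\opnorm{\nabla b(y)})\big)}^2\dif s\le C(1+\abs{y}^{q})\eta_n^{1+2\alpha}$, and Pinsker's inequality gives $\dtv\big(\LL(X_{t_n}^{y}),\LL(Y_{t_n}^{y})\big)\le C(1+\abs{y}^{q})\eta_n^{1/2+\alpha}$; averaging over $y\sim\LL(Y_{t_{n-1}})$ and invoking the moment bound turns the last summand into $C\eta_n^{1/2+\alpha}$. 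For $k\le n-2$ one has $t_n-t_{k+1}\ge\eta_n>0$, so the Malliavin/Bismut-type smoothing estimate of Section \ref{proof1} applies, $\oinorm{\nabla g}\le C\eup^{-c(t_n-t_{k+1})}\big((t_n-t_{k+1})\wedge1\big)^{-1/2}$, and the same one-step analysis bounds the remaining part of the total variation error by $C\sum_{k=0}^{n-2}\eup^{-c(t_n-t_{k+1})}\big((t_n-t_{k+1})\wedge1\big)^{-1/2}\eta_{k+1}^{1+\alpha}$.

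The last step is the summation. From Assumption \ref{A3}, $\eta_{j-1}\le\eta_j(1+\theta\eta_j)\le\eta_j\eup^{\theta\eta_j}$, hence $\eta_{k+1}\le\eta_n\eup^{\theta(t_n-t_{k+1})}$ and $\eta_{k+1}^\alpha\le\eta_n^\alpha\eup^{\alpha\theta(t_n-t_{k+1})}$; with $\theta=\theta_0$ small enough that $c-\alpha\theta_0>0$, each of the two sums equals $\eta_n^\alpha$ times a Riemann sum (mesh tending to $0$) for $\int_0^{\infty}\Phi(u)\eup^{-(c-\alpha\theta_0)u}\,\dif u<\infty$, where $\Phi\equiv1$ in the Wasserstein case and $\Phi(u)=(u\wedge1)^{-1/2}$ in the total variation case; the $u^{-1/2}$ singularity at the origin is integrable, and no argument below $\eta_n$ enters once the last step has been peeled off, so the associated discretization error is again controlled by Assumption \ref{A3}. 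This yields $\W_1(\LL(X_{t_n}),\LL(Y_{t_n}))\le C\eta_n^\alpha$ and $\dtv(\LL(X_{t_n}),\LL(Y_{t_n}))\le C\eta_n^\alpha$ for every $n\ge1$. I expect the genuine difficulty to sit entirely in the Section \ref{proof1} ingredients: the \emph{uniform-in-time} moment bounds for the tamed scheme --- which amount to closing a contractive Lyapunov recursion of the type $\E\abs{Y_{t_{k+1}}}^p\le(1-c\eta_{k+1})\E\abs{Y_{t_k}}^p+C\eta_{k+1}$, where the dissipativity \eqref{eq:A11}, the taming, and the constraint $\alpha<1/2$ must be used in combination --- and the \emph{uniform-in-time} gradient and smoothing estimates for $P_t$ under a merely polynomially dissipative, non-globally Lipschitz drift, which is where Malliavin calculus enters and where the polynomial weights then absorbed by the moment bounds arise.
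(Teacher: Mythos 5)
Your proposal is correct in substance and follows the paper's architecture almost exactly: the same domino (telescoping) decomposition \eqref{eq:thm1pr1}, the observation that additive noise removes the stochastic integral from the one-step difference and yields the improved one-step error $\eta_k^{1+\alpha}$ (Lemma \ref{le:onestep}), the Bismut--Elworthy--Li first-order gradient estimates with exponential decay and $1/\sqrt{t\wedge 1}$ singularity (Lemma \ref{le:gradient}(i)) so that no $\nabla^2 P_t f$ estimate, hence no $b\in\mathcal{C}^2$, is needed, the uniform-in-time exponential moments of the tamed scheme (Lemma \ref{le:Ymoment}) to absorb the weights, and a weighted summation controlled by Assumption \ref{A3} with $\theta_0$ small. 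The one genuinely different ingredient is your treatment of the final step in total variation: you use Girsanov plus Pinsker on $[t_{n-1},t_n]$, exploiting that $X^{y}$ and $Y^{y}$ share the constant diffusion $\sigma_0$, to get $C(1+|y|^{q})\eta_n^{1/2+\alpha}$, whereas the paper invokes Lemma \ref{le:laststep}, a Duhamel/generator comparison giving $C\sqrt{\eta_n}(1+|x|^{2r+1})V(x)\norm{f}_\infty$; your route is cleaner and even slightly sharper in the additive case, but it does not extend to multiplicative noise (the paper's lemma is shared with Theorem \ref{thm1}, which is why the paper reuses it), and it requires a word justifying the entropy/Girsanov formula despite the superlinear drift (localization or Liptser--Shiryaev-type arguments), which you should not leave implicit. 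Two further small points to tighten: the weight in the gradient bound is $V(x)=\eup^{|x|}$, exponential rather than ``polynomial'', though it is indeed absorbed since Lemma \ref{le:Ymoment} controls $\E[V(Y_{t_k})^3]$; and your Riemann-sum comparison for $\sum_k \eta_{k+1}^{1+\alpha}\eup^{-c(t_n-t_{k+1})}\Phi(t_n-t_{k+1})$ evaluates a decreasing integrand at left endpoints, so to get an upper bound you need the step-comparability $\eta_{k+1}\leq \eup^{\theta}\eta_{k+2}$ from Assumption \ref{A3} (exactly the bookkeeping the paper packages into Lemma \ref{le:A1}); with that fix the summation yields $C\eta_n^{\alpha}$ as claimed.
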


\section{Auxiliary Lemmas}\label{proof1}
In this section, we provide some useful auxiliary lemmas for proving main theorems, including moment estimates and one step error estimates for $\left(X_t\right)_{t \geq 0}$, $\left(Y_k\right)_{k \geq 0}$, and gradient estimates for the Markov semigroups of $\left(X_t\right)_{t \geq 0}$.

We will frequently use the smooth function $V \colon \R^d \to [1, + \infty)$ such that, 
\begin{align}{\label{Lyapu}}
V (x) = \eup^{\abs{x}}, \quad \text{for } x\in \R^d \setminus B (\mathbf{0}, 1).
\end{align}
\subsection{Moment estimates}
In this section, we provide the moment estimators for $\left(X_t\right)_{t \geq 0}$ and $\left(Y_k\right)_{k \geq 0}$, as given in Lemma \ref{le:Xmoment} and Lemma \ref{le:Ymoment} below.
\begin{lemma}[Moment estimates for $X_t$]\label{le:Xmoment}
    Suppose Assumption \ref{A1} and \ref{A2} hold. For any $p\ge 1$, there exists a constant $C_{p} > 0$ not depending on $t$ such that
    \begin{align*}
        \E \left[V (X_t)^{p}\right] \leq \eup^{-\lambda t} \E\left[V (X_0)^{p}\right] + C_{p}, \quad \forall t \geq 0,
    \end{align*}
    and 
    \begin{align*}
        \E \abs{X_{t}}^{p}\le \eup^{-\lambda t} \E \abs{X_0}^p + C_{p}, \quad \forall t\ge 0.
    \end{align*}
    where $V(x)$ is a smooth function defined in \eqref{Lyapu}.
\end{lemma}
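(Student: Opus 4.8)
The plan is to use the standard Lyapunov-function argument: produce a drift inequality for the generator $\LL := \langle b, \nabla\,\cdot\,\rangle + \tfrac12\mathrm{tr}(\sigma\sigma^\top\nabla^2\,\cdot\,)$ of \eqref{SDE}, then combine Itô's formula with a localisation and Gronwall's inequality. The heart of the proof is the drift inequality
\begin{align*}
 \LL(V^p)(x) \le -\lambda\, V(x)^p + C_p, \qquad \forall x \in \R^d,
\end{align*}
with $C_p$ depending only on $p, d, r, L_1, L_2, \lambda$. On $\R^d \setminus B(\mathbf 0,1)$ we have $V^p(x) = \eup^{p\abs x}$, so, writing $\hat x = x/\abs x$,
\begin{align*}
 \nabla V^p(x) = p\,\eup^{p\abs x}\,\hat x, \qquad \nabla^2 V^p(x) = p\,\eup^{p\abs x}\bigl(p\,\hat x\hat x^\top + \tfrac1{\abs x}(I - \hat x\hat x^\top)\bigr),
\end{align*}
and since $\sigma\sigma^\top$ is positive semidefinite, Assumption \ref{A2} gives $\langle\sigma\sigma^\top(x)x,x\rangle \le L_2^2\abs x^2$ and $\mathrm{tr}(\sigma\sigma^\top(x)) \le dL_2^2$. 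Combining this with the dissipativity bound \eqref{eq:A11}, for $\abs x > 1$,
\begin{align*}
 \LL(V^p)(x) \le p\,\eup^{p\abs x}\Bigl(\tfrac{L_1}{\abs x} - \lambda\abs x^{r+1} + \tfrac{pL_2^2}{2} + \tfrac{dL_2^2}{2\abs x}\Bigr) \le \eup^{p\abs x}\bigl(A_p - p\lambda\abs x^{r+1}\bigr)
\end{align*}
for a suitable constant $A_p \ge 0$; since $r \ge 0$, this is $\le -\lambda V^p(x)$ once $\abs x$ exceeds some $R_0$ and is bounded by $\eup^{pR_0}A_p$ on the annulus $1 < \abs x \le R_0$. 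On the compact ball $\overline{B(\mathbf 0,1)}$, $V$ is smooth and $b,\sigma$ are continuous, hence bounded, so $\LL(V^p)$ is bounded there; putting these together gives the displayed inequality.

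Given this, I would localise and apply Itô's formula to $\eup^{\lambda t}V(X_{t\wedge\tau_R})^p$ with $\tau_R := \inf\{t \ge 0 : \abs{X_t} \ge R\}$: using the drift inequality,
\begin{align*}
 \eup^{\lambda(t\wedge\tau_R)}V(X_{t\wedge\tau_R})^p = V(X_0)^p + \int_0^{t\wedge\tau_R}\eup^{\lambda s}\bigl(\lambda V^p + \LL V^p\bigr)(X_s)\,\dif s + M_{t\wedge\tau_R},
\end{align*}
where $M$ is a continuous local martingale which is a true martingale after stopping at $\tau_R$. Taking expectations and bounding $\lambda V^p + \LL V^p \le C_p$ yields $\E[\eup^{\lambda(t\wedge\tau_R)}V(X_{t\wedge\tau_R})^p] \le \E[V(X_0)^p] + \tfrac{C_p}{\lambda}\eup^{\lambda t}$. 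The same computation without the exponential weight shows $\eup^{pR}\,\PP(\tau_R \le t) \le \E[V(X_0)^p] + C_p t$ (using $\inf_{\abs x = R}V(x)^p = \eup^{pR}$ and pathwise continuity), hence $\tau_R \to \infty$ a.s., i.e.\ non-explosion. Letting $R \to \infty$, Fatou's lemma gives $\eup^{\lambda t}\E[V(X_t)^p] \le \E[V(X_0)^p] + \tfrac{C_p}{\lambda}\eup^{\lambda t}$, and dividing by $\eup^{\lambda t}$ and renaming $C_p/\lambda$ proves the first inequality.

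For the bound on $\E\abs{X_t}^p$ I would rerun the same scheme with a fixed smooth function $W_p\colon\R^d \to [0,\infty)$ chosen so that $W_p(x) = \abs x^p$ for $\abs x \ge 1$ and $\abs x^p \le W_p(x) \le \abs x^p + M_p$ everywhere. For $\abs x \ge 1$,
\begin{align*}
 \LL(W_p)(x) = p\abs x^{p-2}\langle x,b(x)\rangle + \tfrac p2\abs x^{p-2}\mathrm{tr}(\sigma\sigma^\top) + \tfrac{p(p-2)}2\abs x^{p-4}\langle\sigma\sigma^\top x,x\rangle,
\end{align*}
and \eqref{eq:A11} together with Assumption \ref{A2} again makes the term $-p\lambda\abs x^{r+p}$ dominate; with boundedness on $\overline{B(\mathbf 0,1)}$ this gives $\LL(W_p) \le -\lambda W_p + C_p$ on $\R^d$. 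The previous argument then yields $\E[W_p(X_t)] \le \eup^{-\lambda t}\E[W_p(X_0)] + C_p/\lambda$, and the sandwich $\abs x^p \le W_p(x) \le \abs x^p + M_p$ converts this into $\E\abs{X_t}^p \le \eup^{-\lambda t}\E\abs{X_0}^p + C_p'$. The only non-routine point is the drift inequality: one must check that the decay rate is \emph{exactly} $\lambda$, i.e.\ that the contribution of \eqref{eq:A11}, after weighting, produces a term $-p\lambda\abs x^{r+1}\eup^{p\abs x}$ (resp.\ $-p\lambda\abs x^{r+p}$) that absorbs uniformly all the positive terms — the $L_1$ term, the second-order terms of order $p^2L_2^2\eup^{p\abs x}$ coming from $\sigma$, and the $+\lambda V^p$ being subtracted; the region near the origin is controlled by the smoothness of $V$ (resp.\ $W_p$) and continuity of $b,\sigma$, and the localisation/Fatou step is needed only because $V^p$ and $b$ are unbounded.
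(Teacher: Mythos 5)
Your proposal is correct and follows essentially the same route as the paper: a Lyapunov drift inequality $\mathcal{A}^{P}(V^p)\le -\lambda V^p + C_p$ obtained from Itô's formula together with \eqref{eq:A11}, Assumption \ref{A2} and the explicit form of $V$ outside the unit ball, followed by taking expectations and a Grönwall-type argument (your $\eup^{\lambda t}$-weighting is just the integrated form the paper uses). The only differences are presentational: you spell out the localisation/Fatou step and the analogous $W_p$-argument for $\E\abs{X_t}^p$, which the paper respectively suppresses and omits as ``analogous''.
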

\begin{proof}
    Since $V$ is smooth, without loss of generality, we assume 
    \begin{align}{\label{Gra-V1}}
        \sup_{x\in B (\mathbf{0}, 1)}\opnorm{\nabla^{\kappa} V(x)}\le c_{1}, \quad \text{and}\quad \sup_{x\in B (\mathbf{0}, 1)} V(x)\le c_{1},
    \end{align}
    for $\kappa=1,2$ and some $c_{1}>0$. Notice 
    \begin{align}{\label{DoubleGra-V}}
        \opnorm{\nabla^{2} V(x)}=\opnorm{\frac{1}{|x|}I_{d}+\frac{xx^{T}}{|x|^{2}}-\frac{xx^{T}}{|x|^{3}} }V(x)
        \le 3V(x), \quad \forall |x|\ge 1,
    \end{align}
    where $I_{d}$ is the $d\times d$ identity matrix. 
    
    Hence, for $\tilde{V}_{p}(x) := V(x)^{p}$, it can be easily verified that, for $\kappa=1,2$,
    \begin{align*}
        \sup_{x\in B (\mathbf{0}, 1)}\opnorm{\nabla^{\kappa} \tilde{V}_{p}(x)}\le p c_{1}^{p}, \quad \text{and}\quad \sup_{x\in B (\mathbf{0}, 1)} \tilde{V}_{p}(x)\le c_{1}^{p},
    \end{align*}
    and
    \begin{align*}
        \opnorm{\nabla^{2} \Tilde{V}_{p}(x)} \le 3p^{2}\Tilde{V}_{p}(x), \quad \forall |x|\ge 1.
    \end{align*}
    It follows from Itô's formula, Assumption \ref{A1} and \ref{A2} that
    \begin{align*}
        &\dif \tilde{V}_{p}(X_{t})=\left[\langle\nabla \tilde{V}_{p}(X_{t}),b(X_{t})  \rangle+\frac{1}{2}\langle \nabla^{2} \tilde{V}_{p}(X_{t}), \sigma(X_{t}) \sigma(X_{t})^{T}
         \rangle_{\mathrm{HS}}\right] \dif t+\dif M_{t}\\
         &=\left[\frac{\tilde{V}_{p}(X_{t})}{|X_{t}|}\langle X_{t},b(X_{t})  \rangle+\frac{1}{2}\langle \nabla^{2} \tilde{V}_{p}(X_{t}), \sigma(X_{t}) \sigma(X_{t})^{T}
         \rangle_{\mathrm{HS}}\right] \mathbf{1}_{\{|X_{t}|\ge 1\}} \dif t\\
         &\quad +\left[\langle\nabla \tilde{V}_{p}(X_{t}),b(X_{t})  \rangle+\frac{1}{2}\langle \nabla^{2} \tilde{V}_{p}(X_{t}), \sigma(X_{t}) \sigma(X_{t})^{T}
         \rangle_{\mathrm{HS}}\right] \mathbf{1}_{\{|X_{t}| < 1\}} \dif t+\dif M_{t}\\
         &\le \left[\frac{L_{1}}{|X_{t}|}-\lambda |X_{t}|^{1+r}+\frac{3p^{2}}{2}\norm{\sigma(X_{t}) \sigma(X_{t})^{T}}_{\mathrm{HS}}\right]\tilde{V}_{p}(X_{t}) \mathbf{1}_{\{|X_{t}|\ge 1\}} \dif t\\
         &\quad +(p+1)c_{1}^{p}\left[|b(X_{t})|+\frac{1}{2} \norm{\sigma(X_{t}) \sigma(X_{t})^{T}}_{\mathrm{HS}}
         \right] \mathbf{1}_{\{|X_{t}| < 1\}} \dif t+\dif M_{t}\\
         &\le \left[-\lambda |X_{t}|^{1+r}+c_{2} \right]\tilde{V}_{p}(X_{t}) \dif t+\dif M_{t}\\
         &\le [-\lambda \tilde{V}_{p}(X_{t})+c_{3}]\dif t +\dif M_{t},
    \end{align*}
    where the last inequality is obtained by choosing a large enough $c_{3}$ such that $(-\lambda |x|^{1+r}+c_{2})\Tilde{V}_{p}(x)\le -\lambda \Tilde{V}_{p}(x)+c_{3}$ holds for any $x\in \mathbb{R}^{d}$ and $M_{t}$ is the martingale term. The proof of the first result is completed by taking the expectation on both side and then using the Grönwall's inequality.\par
    The second result can be proved analogously, so we omit the proof.
\end{proof}
Before providing the moment estimates for $Y_{t_n}$, we state the following useful lemma first, which will be proved in Appendix \ref{appendix}.
\begin{lemma} \label{le:A2}
    For a $d$-dimensional random vector with non-degenerate Gaussian distribution $\xi \sim \mathcal{N} (\mu, \eta \Sigma)$, if $\eta \opnorm{\Sigma} \leq 1 / 6$, there exists a constant $C > 0$ only depending on $\opnorm{\Sigma}$ and $d$, such that

    \noindent (i) $\E \left[ \eup^{\abs{\xi}} \mathbf{1}_{\R^d \setminus B (\mu, 1 / 3)} (\xi) \right] \leq C \eta \eup^{\abs{\mu}}$.

    \noindent (ii) $\E \left[ \eup^{\abs{\xi}} \mathbf{1}_{B (\mu, 1 / 3)} (\xi) \right] \leq \eup^{\abs{\mu} + C \eta}$ for $\abs{\mu} \geq 2 / 3$.
\end{lemma}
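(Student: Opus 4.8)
The plan is to reduce both parts to standard Gaussian tail estimates by centering at $\mu$ and exploiting the subexponential moment of the Gaussian. Write $\xi = \mu + \zeta$ with $\zeta \sim \mathcal{N}(0, \eta\Sigma)$, so that $\abs{\xi} \leq \abs{\mu} + \abs{\zeta}$, giving the pointwise bound $\eup^{\abs{\xi}} \leq \eup^{\abs{\mu}} \eup^{\abs{\zeta}}$. The whole lemma then follows from controlling $\E[\eup^{\abs{\zeta}} \mathbf{1}_{\abs{\zeta} \geq 1/3}]$ for part (i) and $\E[\eup^{\abs{\zeta}} \mathbf{1}_{\abs{\zeta} < 1/3}]$ for part (ii), uniformly under the smallness condition $\eta\opnorm{\Sigma} \leq 1/6$.

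For part (i): I would bound $\abs{\zeta}$ by $\opnorm{\Sigma^{1/2}} \sqrt{\eta} \, \abs{Z}$ where $Z \sim \mathcal{N}(0, I_d)$ is standard, so $\abs{Z}^2$ is $\chi^2_d$. On the event $\{\abs{\zeta} \geq 1/3\}$ we have $\abs{Z} \geq c \eta^{-1/2}$ for $c = (3\opnorm{\Sigma^{1/2}})^{-1} \geq \sqrt{6}/3 \cdot$ (something), and the key is that for large $\abs{Z}$ the Gaussian density $\eup^{-\abs{Z}^2/2}$ crushes the factor $\eup^{\sqrt{\eta}\opnorm{\Sigma^{1/2}}\abs{Z}}$. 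Concretely I would write $\E[\eup^{c_0\sqrt{\eta}\abs{Z}} \mathbf{1}_{\abs{Z} \geq c\eta^{-1/2}}]$, complete the square in the exponent, and use a tail bound of the form $\PP(\abs{Z} \geq t) \leq C t^{d-1} \eup^{-t^2/2}$ (or integrate the $\chi^2_d$ density directly); the resulting bound has the shape $C \eta^{(d-1)/2} \eup^{-c'/\eta}$, and since $\eup^{-c'/\eta} \leq C'' \eta$ for all $\eta$ in a bounded range, one gets the claimed $\leq C\eta \eup^{\abs{\mu}}$ after reinserting the $\eup^{\abs{\mu}}$ factor. The constant depends only on $d$ and $\opnorm{\Sigma}$, as required. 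I should be a little careful that the estimate is uniform for all $\eta$ with $\eta\opnorm{\Sigma}\le 1/6$, including small $\eta$, but that is exactly the regime where $\eup^{-c'/\eta}$ is tiny, so there is no issue.

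For part (ii): on $\{\abs{\zeta} < 1/3\}$ I would simply use $\abs{\xi} \leq \abs{\mu} + 1/3$ as a crude bound is too lossy for the $\eup^{C\eta}$ form; instead the sharp statement needs the hypothesis $\abs{\mu} \geq 2/3$. The trick is that for $\abs{\mu} \geq 2/3$ and $\abs{\zeta} < 1/3$, the map $x \mapsto \abs{x}$ is smooth near $\mu$ with gradient $\mu/\abs{\mu}$, so $\abs{\xi} = \abs{\mu} + \sca{\mu/\abs{\mu}, \zeta} + O(\abs{\zeta}^2)$, but more cleanly one can use convexity of $\abs{\cdot}$: $\abs{\xi} \le \abs{\mu} + \sca{\xi/\abs{\xi}, \zeta}$ when $\xi \ne 0$ — actually the cleanest route is $\abs{\mu+\zeta} - \abs{\mu} \leq \sca{\mu/\abs{\mu}, \zeta} + \abs{\zeta}^2/\abs{\mu} \cdot (\text{const})$ valid when $\abs{\zeta} \leq \abs{\mu}/2$, which holds here since $\abs{\zeta} < 1/3 \le \abs{\mu}/2$ is not quite automatic — $\abs{\mu}\ge 2/3$ gives $\abs{\mu}/2 \ge 1/3 > \abs{\zeta}$, so it is fine. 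Then $\E[\eup^{\abs{\xi}}\mathbf{1}_{\abs{\zeta}<1/3}] \leq \eup^{\abs{\mu}} \E[\eup^{\sca{\mu/\abs{\mu},\zeta}} \eup^{C\abs{\zeta}^2}]$. The linear term contributes $\eup^{\eta \sca{\Sigma \mu/\abs{\mu}, \mu/\abs{\mu}}/2} \leq \eup^{\eta\opnorm{\Sigma}/2}$ by the Gaussian MGF, and the quadratic correction $\eup^{C\abs{\zeta}^2}$ has expectation $\leq 1 + C'\eta$ under the smallness condition (this is where $\eta\opnorm{\Sigma} \le 1/6$ keeps the Gaussian quadratic-exponential moment finite and close to $1$); combining, the product is $\leq \eup^{C\eta}$. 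I expect the quadratic remainder term — getting the precise Taylor-type inequality for $\abs{\cdot}$ and then bounding $\E[\eup^{C\abs{\zeta}^2}]$ via the formula $\det(I - 2C\eta\Sigma)^{-1/2}$ — to be the main obstacle, since one must track that $C\eta\opnorm{\Sigma}$ stays bounded below $1/2$; the hypothesis $\eta\opnorm{\Sigma}\le 1/6$ is presumably calibrated exactly so this works with room to spare.
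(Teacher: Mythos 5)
Your proposal is essentially the same strategy as the paper's, and it is correct. For (i), the paper changes variables to standardize $\xi$ and completes the square; your version bounds $\abs{\zeta}$ by $\opnorm{\Sigma}^{1/2}\sqrt{\eta}\abs{Z}$ with $Z$ standard Gaussian and does the same tail calculation, and the observation that $\eup^{-c'/\eta} \leq C\eta$ on a bounded $\eta$-range is exactly how the paper closes. For (ii), the key pointwise bound the paper uses is $\abs{\xi} \leq \sca{\xi,\mu}/\abs{\mu} + \tfrac{3}{2}\abs{\xi-\mu}^2$ on $B(\mu,1/3)$ when $\abs{\mu}\ge 2/3$, derived from the algebraic identity $\abs{\xi}\abs{\mu}-\sca{\xi,\mu} = (\abs{\xi}^2\abs{\mu}^2-\sca{\xi,\mu}^2)/(\abs{\xi}\abs{\mu}+\sca{\xi,\mu})$; your second-order Taylor argument for $t\mapsto\abs{\mu+t\zeta}$ with $f''(t)\leq\abs{\zeta}^2/\abs{\mu+t\zeta}\leq 3\abs{\zeta}^2$ on the relevant event gives exactly the same $\tfrac{3}{2}\abs{\zeta}^2$ remainder, so the two derivations are interchangeable. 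The one place where you should be a bit more careful is the last step of (ii): your phrasing (separate bounds for the linear MGF and for $\E\eup^{C\abs{\zeta}^2}$) reads as if the two factors decouple, but they do not, and if you try to separate them via Cauchy--Schwarz you need $\E\eup^{3\abs{\zeta}^2}$, which is on the boundary of finiteness when $\eta\opnorm{\Sigma}=1/6$. The clean route (the one the paper takes) is to compute the joint Gaussian integral with exponent $\sca{\mu/\abs{\mu},\zeta}+\tfrac{3}{2}\abs{\zeta}^2-\tfrac{1}{2\eta}\abs{\Sigma^{-1/2}\zeta}^2$ directly; then only $\tfrac{3}{2}\cdot 2\eta\opnorm{\Sigma}=3\eta\opnorm{\Sigma}\le 1/2<1$ is needed, and the normalization factor $(1-3\opnorm{\Sigma}\eta)^{-d/2}\le\eup^{C\eta}$ gives the stated bound. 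With that adjustment your argument coincides with the paper's.
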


\begin{lemma}[Moment estimates for $Y_{t_n}$] \label{le:Ymoment}
    For any $\alpha \in (0, 1 / 2)$, there exist constants $C, \eta, \lambda' > 0$ not depending on $n$ such that, if Assumption \ref{A1}, \ref{A2}, and \ref{A3} hold with $\eta_1 \leq \eta$, we have
    \begin{align*}
        \E [V (Y_{t_n})^3] \leq \eup^{-\lambda' t_n} \E [V (Y_0)^3] + C, \quad \forall n \geq 0.
    \end{align*}
    where $V(x)$ is  a smooth function defined in \eqref{Lyapu}.
\end{lemma}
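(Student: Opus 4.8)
The plan is to mimic the proof of Lemma \ref{le:Xmoment}, but since the scheme \eqref{EM1} is discrete I would derive a one-step recursion for $\E[V(Y_{t_{n+1}})^3 \mid \mathscr{F}_{t_n}]$ and then iterate. Write the update as $Y_{t_{n+1}} = \xi_{n+1}$, where conditionally on $\mathscr{F}_{t_n}$ the vector $\xi_{n+1}$ is Gaussian with mean $\mu_{n+1} := Y_{t_n} + \tilde b_{n+1}\eta_{n+1}$ (here $\tilde b_{n+1} := b(Y_{t_n})/(1+\eta_{n+1}^\alpha\opnorm{\nabla b(Y_{t_n})})$ is the tamed drift) and covariance $\eta_{n+1}\sigma(Y_{t_n})\sigma(Y_{t_n})^T$. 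Since $\oinorm{\sigma}\le L_2$, the covariance has operator norm $\le \eta_{n+1}L_2^2$, so for $\eta$ small enough the hypothesis $\eta\opnorm{\Sigma}\le 1/6$ of Lemma \ref{le:A2} is met. Splitting $\E[\eup^{3|\xi_{n+1}|}\mid\mathscr{F}_{t_n}]$ over the events $\{\xi_{n+1}\in B(\mu_{n+1},1/3)\}$ and its complement (applying Lemma \ref{le:A2} with $\mu=3\mu_{n+1}$ or rather rescaling: note $\eup^{3|\xi|}=\eup^{|3\xi|}$ and $3\xi\sim\mathcal N(3\mu,9\eta\Sigma)$, so we need $9\eta L_2^2\le 1/6$), I get, when $|\mu_{n+1}|\ge 2/3$,
\begin{align*}
    \E[V(Y_{t_{n+1}})^3\mid\mathscr{F}_{t_n}] \le \eup^{3|\mu_{n+1}| + C\eta_{n+1}} + C\eta_{n+1}\eup^{3|\mu_{n+1}|},
\end{align*}
and a similar (bounded) bound on the small-$|\mu_{n+1}|$ region, plus a contribution handling the region where $Y_{t_n}\in B(\mathbf 0,1)$ so that $V$ is merely smooth rather than $\eup^{|\cdot|}$.

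The heart of the matter is then controlling $|\mu_{n+1}| = |Y_{t_n} + \tilde b_{n+1}\eta_{n+1}|$. Using \eqref{eq:A11} and \eqref{eq:A12}, for $|Y_{t_n}|\ge 1$ one estimates
\begin{align*}
    |\mu_{n+1}|^2 = |Y_{t_n}|^2 + 2\eta_{n+1}\sca{Y_{t_n}, \tilde b_{n+1}} + \eta_{n+1}^2|\tilde b_{n+1}|^2,
\end{align*}
where $\sca{Y_{t_n},\tilde b_{n+1}} \le \sca{Y_{t_n}, b(Y_{t_n})} \le L_1 - \lambda|Y_{t_n}|^{r+2}$ because the taming denominator is $\ge 1$, and $|\tilde b_{n+1}| \le L_1(1+|Y_{t_n}|\opnorm{\nabla b(Y_{t_n})})/(1+\eta_{n+1}^\alpha\opnorm{\nabla b(Y_{t_n})}) \le L_1(1+\eta_{n+1}^{-\alpha}+|Y_{t_n}|)$ by \eqref{eq:A12}, so $\eta_{n+1}^2|\tilde b_{n+1}|^2 \lesssim \eta_{n+1}^{2-2\alpha} + \eta_{n+1}^2|Y_{t_n}|^2$. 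Since $r\ge 0$, the dissipative term $-2\lambda\eta_{n+1}|Y_{t_n}|^{r+2} \le -2\lambda\eta_{n+1}|Y_{t_n}|^2$ dominates, giving $|\mu_{n+1}|^2 \le |Y_{t_n}|^2 - \lambda\eta_{n+1}|Y_{t_n}|^2 + C\eta_{n+1}$ for $\eta$ small. Hence $|\mu_{n+1}| \le |Y_{t_n}| - c\eta_{n+1}|Y_{t_n}| + C\eta_{n+1} \le |Y_{t_n}| - c'\eta_{n+1}$ once $|Y_{t_n}|$ is large, using $\sqrt{a^2-b}\le a - b/(2a)$, and I feed this back into the exponential bound: $\eup^{3|\mu_{n+1}|} \le \eup^{-3c'\eta_{n+1}}V(Y_{t_n})^3$. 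Combined with the $\eup^{C\eta_{n+1}}$ and $C\eta_{n+1}$ factors from Lemma \ref{le:A2}, for $\eta$ small enough this yields
\begin{align*}
    \E[V(Y_{t_{n+1}})^3\mid\mathscr{F}_{t_n}] \le \eup^{-\lambda'\eta_{n+1}}V(Y_{t_n})^3 + C
\end{align*}
for a suitable $\lambda' > 0$, on the large-$|Y_{t_n}|$ region; on the complementary bounded region everything is bounded by a constant, so the same inequality holds after enlarging $C$.

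Taking expectations and iterating gives $\E[V(Y_{t_{n+1}})^3] \le \eup^{-\lambda'\eta_{n+1}}\E[V(Y_{t_n})^3] + C$, hence by induction $\E[V(Y_{t_n})^3] \le \prod_{k=1}^n\eup^{-\lambda'\eta_k}\,\E[V(Y_0)^3] + C\sum_{k=1}^n\prod_{j=k+1}^n\eup^{-\lambda'\eta_j}$. The first term equals $\eup^{-\lambda' t_n}\E[V(Y_0)^3]$ exactly. For the sum, bound $\sum_{k=1}^n\eup^{-\lambda'(t_n - t_k)}$: since $t_n - t_k = \sum_{j=k+1}^n\eta_j$ and the $\eta_j$ are bounded, a geometric-series comparison (using $\eup^{-\lambda'\eta_j} \le 1 - c\eta_j$ for small $\eta_j$, or more simply splitting the sum according to whether $t_n - t_k$ exceeds a threshold) shows $\sum_{k=1}^n \eta_k\eup^{-\lambda'(t_n-t_k)}$ is bounded uniformly in $n$, which after absorbing a harmless factor gives the uniform constant $C$; this is where Assumption \ref{A3}'s requirement $\eta_n\to 0$ with $\sum\eta_n=\infty$ gets used. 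I expect the main obstacle to be the careful bookkeeping in the Gaussian tail step — correctly tracking the $\eta^\alpha$ versus $\eta$ powers through Lemma \ref{le:A2} and ensuring the constant in the $\eup^{C\eta_{n+1}}$ term is genuinely dominated by the dissipation $\eup^{-3c'\eta_{n+1}}$ for all small $\eta$, rather than only after some step-dependent threshold; the decreasing step size and the uniform smallness $\eta_1\le\eta$ are what make this work uniformly in $n$.
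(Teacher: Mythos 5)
Your overall plan — a one-step recursion for the conditional expectation given $\mathscr{F}_{t_n}$, control of the mean $|\mu_{n+1}|$ via the dissipation in \eqref{eq:A11}, Gaussian tail estimates via Lemma \ref{le:A2} after rescaling by $3$, and then iterating — is exactly the paper's strategy. Two steps, however, are wrong as written and both matter.

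First, the inequality $\sca{Y_{t_n},\tilde b_{n+1}} \le \sca{Y_{t_n}, b(Y_{t_n})}$ ``because the taming denominator is $\geq 1$'' is false precisely in the regime you need it, namely when $\sca{Y_{t_n},b(Y_{t_n})}<0$: dividing a negative number by $1+\eta_{n+1}^\alpha\opnorm{\nabla b}\geq 1$ pushes it \emph{towards} zero, so $\sca{Y_{t_n},\tilde b_{n+1}} \geq \sca{Y_{t_n},b(Y_{t_n})}$ there, not $\leq$. You have silently replaced the tamed dissipation by the full dissipation of $b$, which is an overestimate of the damping. The correct move (as in the paper's display \eqref{eq:mu2}) is to split off the positive $L_1$ part and then bound the denominator from above using $\opnorm{\nabla b(x)}\leq 2L_1(1+|x|^r)$, which yields the weaker but still quadratic dissipation $-\tfrac{2\lambda\eta_{n+1}}{1+2L_1}|Y_{t_n}|^2$. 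Your final conclusion $|\mu_{n+1}|\leq (1-c\eta_{n+1})|Y_{t_n}|+C\eta_{n+1}$ is recoverable, but only after redoing this step; the reasoning you gave does not support it. (A minor related slip: $\eta_{n+1}^2|\tilde b_{n+1}|^2 \lesssim \eta_{n+1}^2 + \eta_{n+1}^{2-2\alpha}|Y_{t_n}|^2$, not $\eta_{n+1}^{2-2\alpha} + \eta_{n+1}^2|Y_{t_n}|^2$; the $\eta^{2-2\alpha}|Y|^2$ term is exactly what forces $\alpha<1/2$ and $\eta_1$ small.)

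Second, and more seriously, your one-step recursion $\E[V(Y_{t_{n+1}})^3\mid\mathscr{F}_{t_n}] \leq \eup^{-\lambda'\eta_{n+1}}V(Y_{t_n})^3 + C$ has the wrong remainder. Iterating this produces $C\sum_{k=1}^n \eup^{-\lambda'(t_n-t_k)}$, and this sum is \emph{not} uniformly bounded when $\eta_n\to 0$: with $\eta_n = n^{-\gamma}$, roughly $n^{\gamma}$ of the terms satisfy $t_n - t_k \leq 1$, so the sum is of order $n^{\gamma}\to\infty$. The recursion must have remainder $+C\eta_{n+1}$; only then does the telescoping $\sum_k \eta_k\eup^{-\lambda'(t_n-t_k)}\leq C$ close the argument (this is exactly the paper's \eqref{eq:le2pr1} and the Lemma \ref{le:A1}-type bound). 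Your handling of the bounded region — ``everything is bounded by a constant, so the same inequality holds after enlarging $C$'' — delivers a genuine $O(1)$ remainder there, not $O(\eta_{n+1})$, and that is why you end up with the unboundable sum. The paper fixes this by replacing $V^3$ with the auxiliary function $U$ of \eqref{eq:U}, which equals $9\eup|x|^2$ near the origin: on the bounded region the recursion then rides on $\E[|Y_{t_n}|^2\mid Y_{t_{n-1}}]\leq(1-\lambda'\eta_n)|Y_{t_{n-1}}|^2 + C\eta_n$ and the Gaussian tail contribution from Lemma \ref{le:A2} is already $O(\eta_n)$, so the remainder genuinely scales with $\eta_n$. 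You either need this device or some other argument that yields an $\eta_{n+1}$-scaled remainder near the origin; the ``enlarge $C$'' shortcut does not work.
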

\begin{proof}
For the convenience of the proof, we define
   \begin{align}{\label{eq:U}}
        U (x) := \begin{cases}
            \eup^{3 \abs{x}}, & \abs{x} \geq \frac{1}{3}; \\
            9 \eup \abs{x}^2, & \abs{x} < \frac{1}{3}.
        \end{cases}
    \end{align}
    Since $\abs{U (x) - V (x)^3} \leq C$, the desired result is equivalent to
    \begin{align*}
        \E U (Y_{t_n}) \leq \eup^{-\lambda' t_n} \E U (Y_0) + C, \quad \forall n \geq 0,
    \end{align*}
    which follows from
    \begin{align} \label{eq:le2pr1}
        \E U (Y_{t_n}) \leq \eup^{-\lambda' \eta_n} \E U (Y_{t_{n-1}}) + C \eta_n, \quad \forall n \geq 1.
    \end{align}
    In fact, applying \eqref{eq:le2pr1} recursively implies that
    \begin{align*}
        \E U (Y_{t_n})
        &\leq \eup^{-\lambda' t_n} \E U (Y_0) + C \sum_{k = 1}^n \eta_k \eup^{-\lambda' (t_n - t_k)} \\
        &\leq \eup^{-\lambda' t_n} \E U (Y_0) + C \sum_{k = 1}^n (1 - \eup^{-\lambda' \eta_k}) \eup^{-\lambda' (t_n - t_k)}\\
        &\leq \eup^{-\lambda' t_n} \E U (Y_{0}) + C\eup^{-\lambda't_n} \int_{ 0}^{t_n}  \eup^{\lambda' x}\dif x\\
        &\leq \eup^{-\lambda' t_n} \E U (Y_0) + C.
    \end{align*}

    It remains to prove \eqref{eq:le2pr1}. Recall that 
    \begin{align*}
        Y_{t_{n}} = Y_{t_{n-1}} + \eta_{n} \frac{b (Y_{t_{n-1}})}{1 + \eta_{n}^\alpha \opnorm{\nabla b (Y_{t_{n-1}})}} + \sigma (Y_{t_{n-1}}) (B_{t_{n}} - B_{t_{n-1}}),
    \end{align*}
    so the conditional distribution of $Y_{t_n}$ with respect to $Y_{t_{n-1}}$ is the normal distribution $\mathcal{N} (\mu, \Sigma)$, where
    \begin{align*}
        \mu = Y_{t_{n-1}} + \eta_{n} \frac{b (Y_{t_{n-1}})}{1 + \eta_{n}^\alpha \opnorm{\nabla b (Y_{t_{n-1}})}}, \qquad
        \Sigma = \eta_n \sigma (Y_{t_{n-1}})\sigma (Y_{t_{n-1}})^{T}.
    \end{align*}

    By Assumption \ref{A1} and the fact that $\frac{x^{r}}{1 + L_1 \left( 1 + x^r \right)}\geq \frac{1}{ 1 + 2L_1 }$ for $x \geq 1$, we have
    \begin{gather} \label{eq:mu2}
    \begin{split}
        \abs{\mu}^2
        =& \abs{Y_{t_{n-1}}}^2 + \left( \frac{\eta_n \abs{b (Y_{t_{n-1}})}}{1 + \eta_{n}^\alpha \opnorm{\nabla b (Y_{t_{n-1}})}} \right)^2 + \frac{2 \eta_n \sca{Y_{t_{n-1}}, b (Y_{t_{n-1}})}}{1 + \eta_{n}^\alpha \opnorm{\nabla b (Y_{t_{n-1}})}} \\
        \leq& (1 + 2 L_1^2 \eta_n^{2 - 2 \alpha}) \abs{Y_{t_{n-1}}}^2 + 2 L_1^2 \eta_n^2 + 2 L_1 \eta_n - \frac{2 \lambda \eta_n \abs{Y_{t_{n-1}}}^{r+2}}{1 + L_1 \left( 1 + \abs{Y_{t_{n-1}}}^r \right)} \\
        \leq& \left[ 1 + 2 L_1^2 \eta_n^{2 - 2 \alpha} - \frac{2 \lambda \eta_n}{ 1 + 2L_1 } \right] \abs{Y_{t_{n-1}}}^2 + 2 L_1 \eta_n + 2 L_1^2 \eta_n^2\\
        &+\left(\frac{2 \lambda \eta_n}{ 1 + 2L_1 }-\frac{2 \lambda \eta_n \abs{Y_{t_{n-1}}}^{r}}{1 + L_1 \left( 1 + \abs{Y_{t_{n-1}}}^r \right)}\right)\abs{Y_{t_{n-1}}}^{2}\mathbf{1}_{\abs{Y_{t_{n-1}}}\leq 1}\\
        \leq& \left[ 1 + 2 L_1^2 \eta_n^{2 - 2 \alpha} - \frac{2 \lambda \eta_n}{ 1 + 2L_1 } \right] \abs{Y_{t_{n-1}}}^2 + 2 L_1 \eta_n + 2 L_1^2 \eta_n^2+\frac{2 \lambda \eta_n}{ 1 + 2L_1 }.
        \end{split}
    \end{gather}
    So there exist constants $C, \lambda' > 0$ such that,  for $\eta_n \leq \eta$ sufficiently small, 
    \begin{align} \label{eq:le2pr3}
        \abs{\mu} \leq (1 - \lambda' \eta_n) \abs{Y_{t_{n-1}}} + C \eta_n.
    \end{align}
    
    If $\lvert Y_{t_{n-1}} \rvert \geq 1 / 3$, By \eqref{eq:U}, we have
    \begin{align*}
    U (x) \leq \eup^{3 \abs{x}} \mathbf{1}_{\R^d \setminus B (\mu, 1 /9)} (x) + \eup^{3 \abs{x}}\mathbf{1}_{ B (\mu, 1 /9)} (x)=J_1+J_2.
    \end{align*}
    For the first term, according to Lemma \ref{le:A2}, we have
    \begin{align*}
        \E ( \eup^{3 \abs{Y_{t_n}}} \mathbf{1}_{\R^d \setminus B (\mu, 1 /9)} (Y_{t_n}) \vert Y_{t_{n-1}} )
        \leq  C \eta_n \eup^{3 \abs{\mu}}.
    \end{align*}
    For the second term, it follows from \eqref{eq:A12} that, for $\eta_n \leq \eta$ sufficiently small,
    \begin{align*}
        \abs{3 \mu}
        \geq 3 \abs{Y_{t_{n-1}}} - \frac{3 \eta_n \abs{b (Y_{t_{n-1}})}}{1 + \eta_{n}^\alpha \opnorm{\nabla b (Y_{t_{n-1}})}}
        \geq 3 \abs{Y_{t_{n-1}}} (1 - L_1 \eta_n^{1 - \alpha}) - 3 L_1 \eta_n
        \geq \frac{2}{3},
    \end{align*}
    According to Lemma \ref{le:A2}, we have
    \begin{align*}
        \E ( \eup^{3 \abs{Y_{t_n}}} \mathbf{1}_{ B (\mu, 1 /3)} (Y_{t_n}) \vert Y_{t_{n-1}} )
        \leq  \eup^{C \eta_n} \eup^{3 \abs{\mu}}.
    \end{align*}
So we get that, for $\lvert Y_{t_{n-1}} \rvert \geq 1 / 3$,
    \begin{align} \label{eq:le2pr2}
    \begin{split}
        \E ( U (Y_{t_n}) \vert Y_{t_{n-1}} )
        &\leq (C \eta_n + \eup^{C \eta_n}) \eup^{3 \abs{\mu}}\\
        &\leq  \eup^{3 (1 - \lambda' \eta_n) \lvert Y_{t_{n-1}} \rvert} \eup^{C \eta_n}\\
        &\leq (1 - \lambda' \eta_n) U (Y_{t_{n-1}}) + C \eta_n\\
        &\leq \eup^{- \lambda' \eta_n}U (Y_{t_{n-1}}) + C \eta_n, 
        \end{split}
    \end{align}
    where the second inequality follows from \eqref{eq:le2pr3} and the fact that $\eup^{2C \eta_n}\geq\eup^{C \eta_n}(1+C \eta_n)\geq\eup^{C \eta_n} +C \eta_n $, the last-to-second inequality follows from Young's inequality.

    If $\lvert Y_{t_{n-1}} \rvert < 1 / 3$, it follows from \eqref{eq:A12} that, for $\eta_n \leq \eta$ sufficiently small,
    \begin{align*}
        \abs{\mu}
        \leq \abs{Y_{t_{n-1}}} + \frac{\eta_n \abs{b (Y_{t_{n-1}})}}{1 + \eta_{n}^\alpha \opnorm{\nabla b (Y_{t_{n-1}})}}
        \leq (1 + L_1 \eta_n^{1 - \alpha}) \abs{Y_{t_{n-1}}} + L_1 \eta_n
        \leq \frac{4}{9},
    \end{align*}
    which implies that 
    \begin{align}{\label{J3J4}}
    U (x) \leq \eup^{3 \abs{x}} \mathbf{1}_{\R^d \setminus B (\mu, 1 / 9)} (x) + 9 \eup \abs{x}^2.
    \end{align} 
    For the first term, according to Lemma \ref{le:A2}, we have
    \begin{align*}
        \E ( \eup^{3 \abs{Y_{t_n}}} \mathbf{1}_{\R^d \setminus B (\mu, 1 / 9)} (Y_{t_n}) \vert Y_{t_{n-1}} )
        \leq C \eta_n.
    \end{align*}
    For the second term, assumption \ref{A2} and \eqref{eq:le2pr3} imply that
    \begin{align*}
        \E ( \abs{Y_{t_n}}^2 \vert Y_{t_{n-1}} )
        \leq \abs{\mu}^2 + L_2^2 \E \abs{B_{t_{n}} - B_{t_{n-1}}}^2
        \leq (1 - \lambda' \eta_n) \abs{Y_{t_{n-1}}}^2 + C \eta_n.
    \end{align*}
    So we get that, for $\lvert Y_{t_{n-1}} \rvert < 1 / 3$, 
    \begin{align} \label{eq:le2pr5}
    \begin{split}
        \E ( U (Y_{t_n}) \vert Y_{t_{n-1}} )
        &\leq 9 \eup (1 - \lambda' \eta_n) \abs{Y_{t_{n-1}}}^2 + C \eta_n\\
        &= (1 - \lambda' \eta_n) U (Y_{t_{n-1}}) + C \eta_n\\
        &\leq \eup^{- \lambda' \eta_n}U (Y_{t_{n-1}}) + C \eta_n.
        \end{split}
    \end{align}     

    Combining \eqref{eq:le2pr2}, \eqref{eq:le2pr5}, we can get the desired result.
\end{proof}

\subsection{One step error estimates}
 In this section, by Lemma \ref{le:Xmoment} and Lemma \ref{le:Ymoment}, we provide the moment estimates for the one step error of $\left(X_t\right)_{t \geq 0}$, and $\left(Y_k\right)_{k \geq 0}$, which is  given in Lemma \ref{le:onestep} below. 

For any $x\in \R^d$ and $k\in\mathbb Z^+,$ let $\{ Y_{t_k,t}^{x} \}_{t\in [t_k,t_{k+1}]}$ solve the SDE  
\begin{align}{\label{oneY}}
\dif Y_{t_k,t}^{x}= \frac{b (x)}{1 + \eta_{k+1}^\alpha \opnorm{\nabla b (x)}}\dif t+\sigma(x) \dif B_t,\ \ \ X_{t_k,t_k}^{x}=Y_{t_k,t_k}^{x}=x, \ \ \  t\in [t_k, t_{k+1}]. 
\end{align} 
Define 
  \begin{align}  \label{SQ}
  Q_{t_k, t_{k+1}}f(x):= \E[f(Y_{t_k,t_{k+1}}^{x})],\ \  Q_{t_k,t_n} := Q_{t_k, t_{k+1}} Q_{t_{k+1}, t_{k+2}}\cdots Q_{t_{n-1}, t_n},\ n\ge k+1.
  \end{align}   

Correspondingly, for any $s\ge 0$ and $x\in \R^d$, let $\{X_{s,t}^{x}\}_{t\ge s} $ solve the SDE
\begin{align}{\label{oneX}}
\dif X_{s,t}^{x}= b(X_{s,t}^{x})\dif t+\sigma(X_{s,t}^{x}) \dif B_t,\ \ X_{s,s}^{x}=x,\ \ t\ge s.
\end{align}
Then the Markov semigroup $P_t$ associated with \eqref{SDE} satisfies 
\begin{equation}\label{SP}   
P_{t-s}f(x)= P_{s,t}f(x):=\E[f(X_{s,t}^{x})],\ \  \ t\ge s\ge 0.
\end{equation}   
Let $Q_{0,0}=P_0$ be the identity operator.

\begin{lemma} \label{le:onestep}
    Suppose Assumption \ref{A1} and \ref{A2} hold.  
    
    (i) For any $p\ge 1$, there exists a constant $C_{p} > 0$ such that for any $n \geq 1$ and $t\in [t_{n-1},t_{n}]$, 
    \begin{align} \label{eq:onestepXY}
        \E\abs{X_{t_{n-1},t}^{x} -x}^{p}\le C_{p}\eta_{n}^{\frac{p}{2}}(1+|x|^{r+1})^{p}, \quad 
        \E\abs{Y_{t_{n-1},t}^{x}-x}^{p}\le C_{p}\eta_{n}^{\frac{p}{2}}(1+|x|^{r+1})^{p};
    \end{align}
    
    (ii) There exists a constant $C> 0$ such that for any $n \geq 1$ and $t\in [t_{n-1},t_{n}]$,  
    \begin{align} \label{eq:onestepX-Y}
        \E \abs{X_{t_{n-1},t}^{x} - Y_{t_{n-1},t}^{x}}^4 \leq C\eta_{n}^{4}(1+|x|^{2r+1})^{4}.
    \end{align}
    Furthermore, if $\sigma \equiv \sigma_0 \in \R^{d \times d}$, we have
    \begin{align*}
        \E \abs{X_{t_{n-1},t}^{x} - Y_{t_{n-1},t}^{x}}^4  \leq C \eta_n^{4 + 4 \alpha}(1+|x|^{2r+1})^{4} .
    \end{align*}
\end{lemma}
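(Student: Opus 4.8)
The plan is to treat the three estimates in turn, using Itô's formula together with the moment bounds from Lemma \ref{le:Xmoment} and Lemma \ref{le:Ymoment}, and the structural Assumptions \ref{A1}--\ref{A2}. For part (i), I would write, for $t\in[t_{n-1},t_n]$,
\begin{align*}
    X_{t_{n-1},t}^{x}-x = \int_{t_{n-1}}^{t} b(X_{t_{n-1},s}^{x})\,\dif s + \int_{t_{n-1}}^{t}\sigma(X_{t_{n-1},s}^{x})\,\dif B_s,
\end{align*}
take $L^p$-norms, and apply the Burkholder--Davis--Gundy inequality to the martingale part (bounded by $C\eta_n^{p/2}$ since $\oinorm{\sigma}\le L_2$) and Jensen/H\"older to the drift part. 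The drift contributes $\eta_n^{p-1}\int_{t_{n-1}}^t\E\abs{b(X_{t_{n-1},s}^{x})}^p\dif s$, and by \eqref{eq:A13} together with $b(0)$ being a fixed vector one has $\abs{b(y)}\le C(1+\abs{y}^{r+1})$; combined with the moment estimate $\E\abs{X_{t_{n-1},s}^{x}}^{p(r+1)}\le C(1+\abs{x}^{p(r+1)})$ on the short interval (a standard short-time moment bound, or Lemma \ref{le:Xmoment} applied with the right exponent), this yields $C_p\eta_n^{p}(1+\abs{x}^{r+1})^p$, which is dominated by the claimed $C_p\eta_n^{p/2}(1+\abs{x}^{r+1})^p$ for $\eta_n\le 1$. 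For $Y_{t_{n-1},t}^{x}$ the argument is even simpler because the drift is the constant vector $b(x)/(1+\eta_{n}^\alpha\opnorm{\nabla b(x)})$, whose norm is $\le\abs{b(x)}\le C(1+\abs{x}^{r+1})$, so no moment propagation is needed at all.

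For part (ii) I would set $D_t := X_{t_{n-1},t}^{x}-Y_{t_{n-1},t}^{x}$ and note $D_{t_{n-1}}=0$ and
\begin{align*}
    D_t = \int_{t_{n-1}}^{t}\Bigl(b(X_{t_{n-1},s}^{x})-\tfrac{b(x)}{1+\eta_{n}^\alpha\opnorm{\nabla b(x)}}\Bigr)\dif s + \int_{t_{n-1}}^{t}\bigl(\sigma(X_{t_{n-1},s}^{x})-\sigma(x)\bigr)\dif B_s.
\end{align*}
Raising to the fourth power, using BDG on the stochastic integral and Jensen on the drift integral, the martingale term is controlled by $C\eta_n\int_{t_{n-1}}^t\E\opnorm{\sigma(X_{t_{n-1},s}^{x})-\sigma(x)}^4\dif s\le C\eta_n\cdot\eta_n\cdot\sup_s\E\abs{X_{t_{n-1},s}^{x}-x}^4$, which by part (i) is $\le C\eta_n^{2}\cdot\eta_n^{2}(1+\abs{x}^{r+1})^4 = C\eta_n^4(1+\abs{x}^{r+1})^4$. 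For the drift term, I would split $b(X_{t_{n-1},s}^{x})-b(x)/(1+\eta_{n}^\alpha\opnorm{\nabla b(x)})$ into $\bigl(b(X_{t_{n-1},s}^{x})-b(x)\bigr)$ plus the taming defect $b(x)-b(x)/(1+\eta_{n}^\alpha\opnorm{\nabla b(x)}) = b(x)\,\eta_{n}^\alpha\opnorm{\nabla b(x)}/(1+\eta_{n}^\alpha\opnorm{\nabla b(x)})$. The first piece is bounded via \eqref{eq:A13} by $C(1+\abs{X_{t_{n-1},s}^{x}}^r+\abs{x}^r)\abs{X_{t_{n-1},s}^{x}-x}$, whose fourth moment after integrating is $\le C\eta_n^4(1+\abs{x}^{2r+1})^4$ by part (i) and Cauchy--Schwarz in the moments; the taming defect, using $\abs{b(x)}\le L_1(1+\abs{x}\opnorm{\nabla b(x)})$ from \eqref{eq:A12} and $\opnorm{\nabla b(x)}\le 2L_1(1+\abs{x}^r)$, is bounded by $C\eta_n^\alpha(1+\abs{x})(1+\abs{x}^r)^2\le C\eta_n^\alpha(1+\abs{x})^{2r+1}$, and it is multiplied by the length $\eta_n$ of the interval and then raised to the fourth, contributing $C\eta_n^{4+4\alpha}(1+\abs{x})^{4(2r+1)}$ — which is absorbed into $C\eta_n^4(1+\abs{x}^{2r+1})^4$. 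In the additive case $\sigma\equiv\sigma_0$ the martingale term vanishes entirely and the first drift piece also vanishes (since then $X$ and $Y$ share the same noise and $b(X_{t_{n-1},s}^{x})-b(x)$ is controlled using part (i), but more carefully: the leading $O(\eta_n^2)$ contribution from $b(X)-b(x)$ — coming from $\nabla b(x)\cdot(X_s-x)$ with $X_s-x\approx\sigma_0(B_s-B_{t_{n-1}})$ having zero mean — needs a second-order Itô expansion to reveal it is actually $O(\eta_n^{3/2})$ in $L^4$ hence $O(\eta_n^3)$ after the time integral and power, while the taming defect remains $O(\eta_n^{4+4\alpha})$), so that the bound improves to $C\eta_n^{4+4\alpha}(1+\abs{x}^{2r+1})^4$ because $4+4\alpha < 6$ for $\alpha<1/2$.

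The main obstacle I anticipate is the additive-case refinement: getting the sharper exponent $4+4\alpha$ rather than just $4$ requires exploiting cancellation in $\E[b(X_{t_{n-1},s}^{x})-b(x)]$ via a Taylor expansion of $b$ (here the hypothesis $b\in\mathcal C^2$ with $\opnorm{\nabla^2 b(x)}\le L_1(1+\abs{x}^r)$ that appears in Theorem \ref{thm1} is the relevant input, though Lemma \ref{le:onestep} as stated only assumes \ref{A1}--\ref{A2}, so I would either invoke that extra regularity or argue that in the additive case the martingale-free structure already forces the drift difference to be $O(\eta_n)$ pointwise with the $\eta_n^{3/2}$ only in expectation) — keeping track of which term is genuinely $O(\eta_n^{\alpha})$-small versus merely $O(\eta_n)$-small is the delicate bookkeeping. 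Everything else is routine application of BDG, H\"older, and the already-established moment bounds.
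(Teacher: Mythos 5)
Your treatment of part (i) and of the non-additive part of (ii) is essentially the same as the paper's: split into drift and noise integrals, apply Jensen/H\"older to the drift and BDG (or, for $Y$, the exact Gaussian structure) to the noise, and invoke the moment bound from Lemma~\ref{le:Xmoment}. That is all correct and matches the paper in both decomposition and technique.

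The one place your reasoning goes off track is the additive-case refinement. You suspect that achieving the exponent $4+4\alpha$ requires a second-order Taylor/It\^o expansion of $b$ to exploit cancellation in $\E\bigl[b(X_{t_{n-1},s}^x)-b(x)\bigr]$, and hence the $\mathcal{C}^2$ hypothesis from Theorem~\ref{thm1}. Neither is needed, and the lemma is indeed correct under A1--A2 alone. The crude estimate you already wrote for the non-additive case already gives the piece $\int_{t_{n-1}}^{t}\bigl(b(X_{t_{n-1},s}^x)-b(x)\bigr)\,\dif s$ a fourth moment of order $\eta_n^{6}$: by Jensen,
\begin{align*}
\E\Bigl|\int_{t_{n-1}}^{t}\bigl(b(X_{t_{n-1},s}^x)-b(x)\bigr)\,\dif s\Bigr|^{4}
\leq \eta_n^{3}\int_{t_{n-1}}^{t}\E\bigl|b(X_{t_{n-1},s}^x)-b(x)\bigr|^{4}\,\dif s,
\end{align*}
and \eqref{eq:A13} together with part (i), Cauchy--Schwarz, and Lemma~\ref{le:Xmoment} give $\E\bigl|b(X_{t_{n-1},s}^x)-b(x)\bigr|^{4}\lesssim \eta_n^{2}(1+|x|^{2r+1})^{4}$, so the whole contribution is $\lesssim \eta_n^{3}\cdot\eta_n\cdot\eta_n^{2}=\eta_n^{6}\leq\eta_n^{4+4\alpha}$ since $\alpha<1/2$. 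The taming defect then supplies the dominant $\eta_n^{4+4\alpha}$ term exactly as you computed, and there is no martingale term to worry about. So once you drop the unnecessary cancellation argument and the worry about extra regularity, your proof and the paper's coincide. (As a minor aside: your parenthetical ``$O(\eta_n^{3/2})$ in $L^4$ hence $O(\eta_n^3)$ after the time integral and power'' has the arithmetic off --- a random variable with $L^4$ norm $O(\eta_n^{3/2})$ has fourth moment $O(\eta_n^6)$, not $O(\eta_n^3)$; fortunately the correct $\eta_n^6$ is still dominated by $\eta_n^{4+4\alpha}$.)
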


\begin{proof}
    (i) By Jensen's inequality, it suffices to consider $p\ge 2$. For any $t\in [t_{n-1},t_{n}]$, (\ref{oneX}) and Hölder's inequality imply that
    \begin{align*}
        \E\abs{ X_{t_{n-1},t}^{x} - x }^{p}
        &=\E \abs{\int_{t_{n-1}}^{t} b(X_{t_{n-1},s}^{x}) \dif s +\int_{t_{n-1}}^{t} \sigma(X_{t_{n-1},s}^{x}) \dif B_{s}    }^{p}\\
        &\le 2^{p-1} \E \abs{\int_{t_{n-1}}^{t} b(X_{t_{n-1},s}^{x}) \dif s   }^{p}+2^{p-1}\E\abs{\int_{t_{n-1}}^{t} \sigma(X_{t_{n-1},s}^{x}) \dif B_{s}    }^{p}\\
        &\le 2^{p-1} \eta_{n}^{p-1} \int_{t_{n-1}}^{t} \E\abs{b(X_{t_{n-1},s}^{x})}^{p} \dif s   +2^{p-1}\eta_{n}^{\frac{p}{2}-1}\int_{t_{n-1}}^{t} \E\norm{\sigma(X_{t_{n-1},s}^{x})}_{\mathrm{HS}}^{p} \dif s   
    \end{align*}
    where the first inequality is a consequence of the inequality $|A+B|^{p}\le 2^{p-1}\left(|A|^{p}+|B|^{p} \right)$. 
    
    It follows from Assumption \ref{A1}, \ref{A2} and Lemma \ref{le:Xmoment} that
    \begin{align*} 
        \E\abs{ X_{t_{n-1},t}^{x} - x }^{p}
        \le C_{p}\left[\eta_{n}^{p-1} \int_{t_{n-1}}^{t} \E \abs{X_{t_{n-1},s}^{x}}^{(r+1)p} \dif s   +\eta_{n}^{\frac{p}{2}}\right]
        \le C_{p}\eta_{n}^{\frac{p}{2}}(1+|x|^{r+1})^{p},
    \end{align*}
    holds for some positive constant $C_{p}$. 
    
    Now we turn to prove the second inequality in \eqref{eq:onestepXY}. Notice that for any $t\in [t_{n-1},t_{n}]$, 
    \begin{align*}
        Y_{t_{n-1},t}^{x}-x\sim \mathcal{N}\left(\frac{b (x)(t-t_{n-1})}{1 + \eta_{n}^\alpha \opnorm{\nabla b (x)}} , \sigma(x)\sigma(x)^{T}(t-t_{n-1})\right).
    \end{align*}
    So, as a consequence of Assumption \ref{A1} and \ref{A2}, we have
    \begin{align*}
        \E \abs{Y_{t_{n-1},t}^{x}-x}^{p}
        &\le 2^{p-1}(t-t_{n-1})^{p}\abs{\frac{b (x)}{1 + \eta_{n}^\alpha \opnorm{\nabla b (x)}}}^{p} +2^{p-1}(t-t_{n-1})^{\frac{p}{2}} \|\sigma(x)\sigma(x)^{T}\|_{\mathrm{HS}}^{\frac{p}{2}}\E|B_{1}|^{p}\\
        &\le C_{p}\left[(t-t_{n-1})^{p}(1+|x|^{r+1})^{p}+(t-t_{n-1})^{\frac{p}{2}}\right] \le C_{p}(t-t_{n-1})^{\frac{p}{2}}(1+|x|^{r+1})^{p}.
    \end{align*}
    
    (ii) It follows from Assumption \ref{A1} that, for any $y\in \R^{d}$,
    \begin{align} \label{eq:drift}
        \begin{split}
        &\mathrel{\phantom{=}} \abs{b (y)-\frac{b(x)}{1+\eta_{n}^{\alpha} \opnorm{\nabla b(x)}} }\\
        &\le \abs{b (y)-b(x)}+ \frac{\eta_{n}^{\alpha}\opnorm{\nabla b(x)}}{1+\eta_{n}^{\alpha} \opnorm{\nabla b(x)}} \abs{b(x)}\\
        &\le L_{1} (1+|x|^{r}+|y|^r) \abs{y-x}+C\eta_{n}^{\alpha}\left(1+|x|^{2r+1}\right).
        \end{split}
    \end{align}
    Together with \eqref{oneY} and Assumption \ref{A2}, we have for any $t\in (t_{n-1},t_{n}]$,
    \begin{align*}
        &\mathrel{\phantom{=}} \E\abs{ X_{t_{n-1},t}^{x} - Y_{t_{n-1},t}^{x} }^{4}\\
        &=\E \abs{ \int_{t_{n-1}}^{t} \left( b(X_{t_{n-1},s}^{x})- \frac{b(x)}{1+\eta_{n}^{\alpha} \opnorm{\nabla b(x)}} \right) \dif s +\int_{t_{n-1}}^{t} (\sigma(X_{t_{n-1},s}^{x})-\sigma(x)) \dif B_{s} }^{4}\\
        &\le 8\eta_{n}^{3} \int_{t_{n-1}}^{t} \E\abs{b(X_{t_{n-1},s}^{x})- \frac{b(x)}{1+\eta_{n}^{\alpha} \opnorm{\nabla b(x)}} }^{4}\dif s +8\eta_{n}   \int_{t_{n-1}}^{t} \E\abs{\sigma(X_{t_{n-1},s}^{x})-\sigma(x)}^{4} \dif s   \\
        &\le C\eta_{n}^{3}\left[\int_{t_{n-1}}^{t} \E\left[(1+|x|^{4r}+|X_{t_{n-1},t}^{x}|^{4r})\abs{X_{t_{n-1},t}^{x}-x}^{4} \right]\dif s +\eta_{n}^{4\alpha+1}(1+|x|^{2r+1})^{4}     \right]\\
        &\quad +C\eta_{n} \int_{t_{n-1}}^{t} \E\abs{X_{t_{n-1},s}^{x}-x}^{4} \dif s\\
        &\le C\eta_{n}^{4}(1+|x|^{2r+1})^{4}.
    \end{align*}
where the last inequality comes from Lemma \ref{le:Xmoment} and \eqref{eq:onestepXY}.
    
    If furthermore $\sigma \equiv \sigma_0 \in \R^{d \times d}$, then
    \begin{align*}
        \E\abs{ X_{t_{n-1},t}^{x} - Y_{t_{n-1},t}^{x} }^{4}\
        =\E \abs{ \int_{t_{n-1}}^{t} \left( b(X_{t_{n-1},s}^{x})- \frac{b(x)}{1+\eta_{n}^{\alpha} \opnorm{\nabla b(x)}} \right) \dif s }^{4}.
    \end{align*}
    So the result can be obtained using the same method and the proof is complete.
\end{proof}

\subsection{Gradient estimate for the semigroups of $X_t$}
In this section, we mainly use Lemma \ref{le:Xmoment}, \ref{le:onestep} and the Bismut–Elworthy–Li formula (see Lemma \ref{le:BEL} and \ref{le:BEL1} below) to provide gradient estimates for the Markov semigroups of $X_t$, which shows in Lemma \ref{le:gradient}. 

For any $v,w\in \R^{d}$ and fixed $t>0$, we can define
\begin{gather}  \label{def:R,K}
    \begin{split}
    R^{v}_{t}&:=\nabla_{v}X_{t}^{x}:= \lim_{\epsilon\to 0} \frac{X_{t}^{x+\epsilon v}-X_{t}^{x}}{\epsilon},\\
    K^{v,w}_{t}&:= \nabla_{v}\nabla_{w} X_{t}^{x}:= \lim_{\epsilon \to 0} \frac{\nabla_{v}X_{t}^{x+\epsilon w}-\nabla_{v}X_{t}^{x}}{\epsilon}. 
\end{split}
\end{gather}
Combining above definitions with \eqref{SDE}, it is not difficult to see that $R^{v}_{t}$ and $K^{v,w}_{t}$ solve the following equations:
\begin{align}{\label{Rv}}
    \dif R_t^v = \nabla_{R_t^v} b (X_t^x) \, \dif t + \nabla_{R_t^v} \sigma (X_t^x) \, \dif B_t,\quad R_0^v = v
\end{align}
and
\begin{gather} {\label{Kvw}}
    \begin{split}
    \dif K_t^{v,w} = &\left( \nabla_{K_t^{v,w}} b (X_t^x) + \nabla_{R_t^v} \nabla_{R_t^w} b (X_t^x) \right) \dif t \\
    &+ \left( \nabla_{K_t^{v,w}} \sigma (X_t^x) + \nabla_{R_t^v} \nabla_{R_t^w} \sigma (X_t^x) \right) \dif B_t,\quad K_{0}^{v,w}=0. 
    \end{split}
\end{gather}

The proof of following Bismut–Elworthy–Li formula is standard and classical. We refer to \cite{Bi84, EL94} for more details. 
\begin{lemma}[Bismut–Elworthy–Li formula] \label{le:BEL}
    Let $\{X_{t}\}_{t\ge 0}$ be the solution of \eqref{SDE}. Then for any $t > 0$, $ v \in \R^d$ and $f \in \mathcal{C}_b^1 (\R^d)$, we have
    \begin{align} \label{eq:BEL}
        \nabla_v P_t f (x) = \frac{1}{t} \E \left[ f (X_t^x) \int_0^t \sca{\sigma^{-1} (X_t^x) R_t^v, \dif B_t} \right].
    \end{align}
  \end{lemma}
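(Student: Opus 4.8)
The plan is to derive \eqref{eq:BEL} from the Malliavin integration-by-parts (duality) formula; Bismut's original Girsanov-based argument gives an equivalent route. First, since $f \in \mathcal{C}_b^1(\R^d)$, I would differentiate under the expectation to write $\nabla_v P_t f(x) = \E[\sca{\nabla f(X_t^x), R_t^v}]$, where $R^v$ is the derivative flow solving \eqref{Rv}; this step is justified once $R^v$ is shown to have finite moments of all orders on $[0,t]$, which follows from a Grönwall estimate on the linear SDE \eqref{Rv} using the polynomial growth of $\nabla b$ and $\nabla\sigma$ (Assumptions~\ref{A1}, \ref{A2}) together with the moment bounds of Lemma~\ref{le:Xmoment}.

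Next I would bring in the Jacobian flow $J_{s\to t}$ of \eqref{SDE}, for which $R_t^v = J_{0\to t} v$ and the cocycle identity $J_{s\to t} J_{0\to s} = J_{0\to t}$ holds, and use the standard expression for the Malliavin derivative, $D_s X_t^x = J_{s\to t}\,\sigma(X_s^x)$ for $0 \le s \le t$, so that $D_s\big(f(X_t^x)\big) = \big(\nabla f(X_t^x)\big)^{\!\top} J_{s\to t}\,\sigma(X_s^x)$. The crucial choice is the adapted process $h_s := t^{-1}\sigma^{-1}(X_s^x) R_s^v$, which is in $L^2(\Omega\times[0,t])$ because $\oinorm{\sigma^{-1}}\le L_2$ and $R^v$ has finite second moments; the cocycle identity then collapses $\int_0^t \sca{D_s(f(X_t^x)), h_s}\,\dif s$ to exactly $\sca{\nabla f(X_t^x), R_t^v}$. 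Taking expectations and applying the duality $\E[\sca{DF,h}_{L^2}] = \E[F\,\delta(h)]$, in which the divergence $\delta(h)$ equals the Itô integral $\int_0^t \sca{h_s,\dif B_s}$ since $h$ is adapted, and then substituting the definition of $h$, yields \eqref{eq:BEL}.

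The only genuine difficulty I anticipate is the integrability bookkeeping: checking that $X^x$, $R^v$ and the associated Malliavin derivative have enough finite moments to legitimise differentiation under the expectation, to place $h$ in the domain of $\delta$, and to ensure $f(X_t^x)\,\delta(h)\in L^1$. All of these reduce to $L^p$ bounds for the linear equation \eqref{Rv}, obtained from Assumptions~\ref{A1} and \ref{A2}, Lemma~\ref{le:Xmoment}, and the Burkholder--Davis--Gundy inequality; since $f$ is bounded, the last integrability claim is then immediate from $\delta(h)\in L^2$. If one wishes to bypass Malliavin calculus, Bismut's method gives the same formula: perturb the driving noise by $B_s \mapsto B_s + \epsilon\int_0^s h_u\,\dif u$, observe via Girsanov that the perturbed process has the same law, differentiate the resulting identity in $\epsilon$ at $0$, and identify the $\epsilon$-derivative of the Radon--Nikodym density with $\int_0^t \sca{h_s,\dif B_s}$ --- the flow computation being identical. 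Full details are in \cite{Bi84, EL94}.
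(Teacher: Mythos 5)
The paper gives no proof of this lemma, deferring entirely to \cite{Bi84, EL94}, and your Malliavin-duality derivation (with the Girsanov/Bismut perturbation as an alternative) is exactly the classical argument those references contain: the cocycle collapse $J_{s\to t}R_s^v = R_t^v$ with $h_s = t^{-1}\sigma^{-1}(X_s^x)R_s^v$ and the identification of $\delta(h)$ with the It\^o integral for adapted $h$ are correct. One small caution: the moment bounds for $R^v$ do not follow from a naive Gr\"onwall estimate on \eqref{Rv} using only the polynomial growth of $\nabla b$ and Lemma \ref{le:Xmoment} (that route would demand exponential integrability of $\int_0^t \opnorm{\nabla b(X_s^x)}\,\dif s$); in this paper they are obtained through the Lyapunov-weighted argument of Lemma \ref{le:BEL1}, which is the estimate you should invoke for the integrability bookkeeping.
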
  
  \begin{lemma}  \label{le:BEL1}
   Let $\{X_{t}\}_{t\ge 0}$ be the solution of \eqref{SDE}.  Suppose Assumption \ref{A1} and \ref{A2} hold. Then for any $p\ge 2$, 
     
    \noindent (i) There exists a constant $C> 0$ such that 
    \begin{align}\label{ineq:Rmoment1}
    \E \abs{R_t^v}^p \leq \eup^{Ct} \abs{v}^p V (x), \qquad \forall t > 0.
\end{align}

\noindent (ii) Further assume $b \in \mathcal{C}^2 (\R^d; \R^d)$ and $\opnorm{\nabla^2 b (x)} \leq L_1 (1 + \abs{x}^r)$, $\forall x \in \R^d$, there exists a constant $C> 0$ such that 
        \begin{align}\label{ineq:Rmoment2}
    \E \abs{K_t^{v,w}}^p \leq \eup^{Ct} \abs{v}^p\abs{w}^p V (x), \qquad \forall t > 0,
\end{align}
where $V(x)$ is a smooth function defined in \eqref{Lyapu}.
\end{lemma}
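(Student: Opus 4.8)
The plan is to prove both bounds by a Lyapunov-type argument for the products $|R_t^v|^p V(X_t^x)$ and $|K_t^{v,w}|^p V(X_t^x)$. The decisive structural fact is that the super-dissipativity $\sca{x,b(x)}\le L_1-\abs{x}^{r+2}\lambda$ --- which, via the computation carried out in the proof of Lemma~\ref{le:Xmoment}, yields the drift estimate $\dif V(X_t^x)\le(-\lambda\abs{X_t^x}^{1+r}+c)V(X_t^x)\,\dif t+\dif M_t$ --- contributes a term of order $-\lambda\abs{X_t^x}^{1+r}V(X_t^x)$ that dominates the at most polynomial-in-$\abs{X_t^x}$ growth coming from $\opnorm{\nabla b}$ and $\opnorm{\nabla^2 b}$; hence the two products have bounded effective exponential growth rate, which is exactly what \eqref{ineq:Rmoment1}--\eqref{ineq:Rmoment2} assert.

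For (i): from \eqref{eq:A13} we have $\opnorm{\nabla b(x)}\le 2L_1(1+\abs{x}^r)$ and from Assumption~\ref{A2} $\oinorm{\nabla\sigma}\le L_2$, so It\^o's formula applied to $|R_t^v|^p$ through \eqref{Rv} (regularising by $(|R_t^v|^2+\varepsilon)^{p/2}$ and letting $\varepsilon\downarrow0$) gives $\dif|R_t^v|^p\le C_p(1+\abs{X_t^x}^r)|R_t^v|^p\,\dif t+\dif N_t$ for a local martingale $N$. Combining this with the drift estimate for $V(X_t^x)$ recalled above and bounding the covariation $\dif\sca{|R^v|^p,V(X^x)}_t$ by $C_p|R_t^v|^pV(X_t^x)\,\dif t$ (using $\oinorm{\sigma},\oinorm{\nabla\sigma}\le L_2$ and the elementary bounds on $\nabla V$), the It\^o product rule gives
\[
\dif\bigl(|R_t^v|^pV(X_t^x)\bigr)\le\bigl(C_p(1+\abs{X_t^x}^r)-\lambda\abs{X_t^x}^{1+r}+c'\bigr)|R_t^v|^pV(X_t^x)\,\dif t+\dif(\text{loc.\ mart.}).
\]
Since $u\mapsto C_p(1+u^r)-\lambda u^{1+r}+c'$ is bounded above on $[0,\infty)$ by a constant $C_\ast=C_\ast(p,r,\lambda,L_1,L_2,d)$, localising at $\tau_N=\inf\{t:\abs{X_t^x}+|R_t^v|\ge N\}$, taking expectations, applying Fatou's lemma and Grönwall's inequality give $\E[|R_t^v|^pV(X_t^x)]\le\eup^{C_\ast t}|v|^pV(x)$; since $V\ge1$ this is \eqref{ineq:Rmoment1}. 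Re-running the computation with $2p$ in place of $p$ gives $\E[|R_t^v|^{2p}V(X_t^x)]\le\eup^{Ct}|v|^{2p}V(x)$ and, keeping half of the dissipative term, $\int_0^t\E[\abs{X_s^x}^{1+r}|R_s^v|^{2p}V(X_s^x)]\,\dif s\le C\eup^{Ct}|v|^{2p}V(x)$; both facts feed into (ii).

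For (ii): now also $\opnorm{\nabla^2 b(x)}\le L_1(1+\abs{x}^r)$ and $\oinorm{\nabla^2\sigma}\le L_2$. It\^o's formula for $|K_t^{v,w}|^p$ through \eqref{Kvw}, with Young's inequality absorbing the cross terms $|K_t^{v,w}|^{p-1}|R_t^v||R_t^w|$ and $|K_t^{v,w}|^{p-2}|R_t^v|^2|R_t^w|^2$, gives $\dif|K_t^{v,w}|^p\le C_p(1+\abs{X_t^x}^r)\bigl(|K_t^{v,w}|^p+|R_t^v|^p|R_t^w|^p\bigr)\,\dif t+\dif(\text{loc.\ mart.})$; forming the product with $V(X_t^x)$ as in (i) yields
\[
\dif\bigl(|K_t^{v,w}|^pV(X_t^x)\bigr)\le C_\ast|K_t^{v,w}|^pV(X_t^x)\,\dif t+C_p(1+\abs{X_t^x}^r)|R_t^v|^p|R_t^w|^pV(X_t^x)\,\dif t+\dif(\text{loc.\ mart.}),
\]
whence, after localisation and expectation, $\E[|K_t^{v,w}|^pV(X_t^x)]\le C_\ast\int_0^t\E[|K_s^{v,w}|^pV(X_s^x)]\,\dif s+C_p\int_0^t\E[(1+\abs{X_s^x}^r)|R_s^v|^p|R_s^w|^pV(X_s^x)]\,\dif s$. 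To handle the last integral I would apply Cauchy--Schwarz to bound $(1+\abs{X_s^x}^r)|R_s^v|^p|R_s^w|^pV(X_s^x)$ against $(1+\abs{X_s^x}^r)|R_s^v|^{2p}V(X_s^x)$ and $(1+\abs{X_s^x}^r)|R_s^w|^{2p}V(X_s^x)$, then use $u^r\le u^{1+r}+1$ together with the two estimates from (i) to get $\int_0^t\E[(1+\abs{X_s^x}^r)|R_s^v|^p|R_s^w|^pV(X_s^x)]\,\dif s\le C\eup^{Ct}|v|^p|w|^pV(x)$. Grönwall's inequality then gives $\E[|K_t^{v,w}|^pV(X_t^x)]\le\eup^{Ct}|v|^p|w|^pV(x)$, and \eqref{ineq:Rmoment2} follows from $V\ge1$.

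The main obstacle is precisely this source term in (ii): estimating $(1+\abs{X_s^x}^r)V(X_s^x)$ crudely by $V(X_s^x)^{1+\delta}$ would leave a factor $V(x)^{1+\delta}$ on the right and miss the claimed $V(x)$, so one must instead retain the surplus dissipation $-\tfrac{\lambda}{2}\abs{X_s^x}^{1+r}$ available in the $|R^v|^{2p}V(X^x)$ estimate of step (i) in order to absorb the polynomial weight while keeping exactly the first power of $V$. The remaining ingredient --- well-posedness and finiteness of moments of $R_t^v$ and $K_t^{v,w}$, which legitimise the localisation --- is routine given Lemma~\ref{le:Xmoment} and the polynomial growth of $\nabla b,\nabla^2 b,\nabla\sigma,\nabla^2\sigma$.
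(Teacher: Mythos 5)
Your proposal is correct, and the overall strategy --- apply It\^o's formula to the products $|R_t^v|^pV(X_t^x)$ and $|K_t^{v,w}|^pV(X_t^x)$, use the dissipation $\sca{x,b(x)}\le L_1-\lambda|x|^{r+2}$ to dominate the polynomial growth of $\opnorm{\nabla b}$ and $\opnorm{\nabla^2 b}$, then apply Gr\"onwall and $V\ge1$ --- is exactly the paper's. The one place where you genuinely diverge is the treatment of the source term $\E\bigl[(1+|X_t^x|^r)\,|R_t^v|^p|R_t^w|^p\,V(X_t^x)\bigr]$ in part (ii). The paper simply bounds $1+|X|^r\lesssim V(X)$ and thereby estimates the source by $\bigl(\E|R_t^v|^{2p}V(X_t^x)^2\bigr)^{1/2}\bigl(\E|R_t^w|^{2p}V(X_t^x)^2\bigr)^{1/2}\le\eup^{Ct}|v|^p|w|^pV(x)^2$; Gr\"onwall then yields $\E|K_t^{v,w}|^p\le\eup^{Ct}|v|^p|w|^pV(x)^2$, and the final power $V(x)$ is recovered by a closing H\"older trick: $\E|K_t^{v,w}|^p\le\sqrt{\E|K_t^{v,w}|^{2p}}\le\eup^{Ct/2}|v|^p|w|^pV(x)$. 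You instead keep half of the dissipation $-\lambda|X_t^x|^{1+r}$ in the step-(i) It\^o identity as a reserve, producing the time-integrated bound $\int_0^t\E\bigl[|X_s^x|^{1+r}|R_s^v|^{2p}V(X_s^x)\bigr]\dif s\le C\eup^{Ct}|v|^{2p}V(x)$, and use $u^r\le u^{1+r}+1$ together with Cauchy--Schwarz (applied both in $\omega$ and then in $s$, or by normalising $|v|=|w|=1$) to absorb the polynomial weight directly, landing on $\E[|K_t^{v,w}|^pV(X_t^x)]\le\eup^{Ct}|v|^p|w|^pV(x)$ without ever introducing $V(x)^2$. Both routes are valid; yours avoids the closing square-root at the cost of more careful bookkeeping of the surplus dissipation, while the paper's is shorter to state but must re-run the argument at exponent $2p$ (implicitly, since $p\ge2$ was arbitrary) to undo the extra power of $V$.
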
   

\begin{proof}
    (i) 
    By \eqref{Rv}, \eqref{SDE} and It\^o's formula, we have
    \begin{gather} {\label{RtV}}
    \begin{split}
        \dif \abs{R_t^v}^p=&  p|R_t^v|^{p-2}\langle R_t^v,\nabla_{R_t^v} b (X_t^x) \rangle \dif t +\frac{1}{2}p(p-2)|R_t^v|^{p-4}|R_t^v \nabla_{R_t^v} \sigma (X_t^x)|^2 \dif t \\ 
        &+\frac12p|R_t^v|^{p-2}\norm{\nabla_{R_t^v} \sigma (X_t^x)}_{\mathrm{HS}}^{2} \dif t+ p|R_t^v|^{p-2}\langle R_t^v, \nabla_{R_t^v} \sigma (X_t^x) \dif B_{t}\rangle,
        \end{split}
    \end{gather}
    and
    \begin{gather}{\label{VXt}}
    \begin{split}
        \dif V(X_{t}^{x})= &\langle \nabla V(X_{t}^{x}), b(X_{t}^{x}) \rangle \dif t+\frac{1}{2}\langle \nabla^{2} V(X_{t}^{x}), \sigma(X_{t}^{x})\sigma(X_{t}^{x})^{T}  \rangle_{\mathrm{HS}}\dif t\\
        &+\langle \nabla V(X_{t}^{x}), \sigma(X_{t}^{x}) \dif B_t\rangle.
        \end{split}
    \end{gather}
    It follows from \eqref{RtV} and \eqref{VXt} that
    \begin{gather} \label{eq:ItoRV}
        \begin{split}
            &\dif (\abs{R_t^v}^p  V(X_{t}^{x}))\\
        =&\bigg{[}p|R_t^v|^{p-2}V(X_{t}^{x})\langle R_t^v,\nabla_{R_t^v} b (X_t^x) \rangle+\frac{p}{2}(p-2)V(X_{t}^{x})|R_t^v|^{p-4}|R_t^v \nabla_{R_t^v} \sigma (X_t^x)|^2 \\
        &+\frac12pV(X_{t}^{x})|R_t^v|^{p-2}\norm{\nabla_{R_t^v} \sigma (X_t^x)}_{\mathrm{HS}}^{2}+\abs{R_t^v}^p\langle \nabla V(X_{t}^{x}), b(X_{t}^{x}) \rangle   \\
        & +\frac{\abs{R_t^v}^p}{2}\langle \nabla^{2} V(X_{t}^{x}), \sigma(X_{t}^{x})\sigma(X_{t}^{x})^{T}  \rangle_{\mathrm{HS}}\\
        & +p|R_t^v|^{p-2}\langle \nabla_{R_t^v} \sigma (X_t^x) R_t^v,  \sigma(X_{t}^{x})\nabla V(X_{t}^{x})  \rangle \bigg{]} \dif t +\dif M_{t},
        \end{split}
    \end{gather}
    where $M_{t}$ is the martingale term. 
    
    For any $x\in \mathbb{R}^{d}$, by \eqref{Gra-V1}, \eqref{DoubleGra-V}, we know there exists some constant $c$ such that
    \begin{align*}
        \abs{\nabla V(x)}\le cV(x),\quad \text{and}\quad \opnorm{\nabla^{2} V(x)}\le cV(x).
    \end{align*}
    Together with Assumption \ref{A1} and \ref{A2}, and the fact that $\|\nabla_{R_t^v} \sigma (X_t^x)\|_{\mathrm{op}} \leq\|\nabla_{R_t^v} \sigma (X_t^x)\|_{\mathrm{HS}}$, we have the estimates for the first three terms in the right side of \eqref{eq:ItoRV}, i.e.
    \begin{align*}
        &p|R_t^v|^{p-2}V(X_{t}^{x})\langle R_t^v,\nabla_{R_t^v} b (X_t^x) \rangle+\frac{1}{2}p(p-2)V(X_{t}^{x})|R_t^v|^{p-4}|R_t^v \nabla_{R_t^v} \sigma (X_t^x)|^2\\
        &+\frac12pV(X_{t}^{x})|R_t^v|^{p-2}\norm{\nabla_{R_t^v} \sigma (X_t^x)}_{\mathrm{HS}}^{2}\\
        \le&p|R_t^v|^{p-2}V(X_{t}^{x})\langle R_t^v,\nabla_{R_t^v} b (X_t^x) \rangle+\frac{1}{2}p(p-1)V(X_{t}^{x})|R_t^v|^{p-2}\norm{\nabla_{R_t^v} \sigma (X_t^x)}_{\mathrm{HS}}^{2}\\
        \le &p \abs{R_t^v}^pV(X_{t}^{x}) \opnorm{\nabla b(X_{t}^{x})}+\frac{1}{2}p(p-1)dL_{2}^{2}\abs{R_t^v}^pV(X_{t}^{x})\\
        \le& C \abs{R_t^v}^pV(X_{t}^{x}) \left( 1+|X_{t}^{x}|^{r} \right).
    \end{align*}
    Further notice that for $|x|\ge 1$, $\nabla V(x)=\frac{x}{|x|} V(x)$, then, we have
    \begin{align*}
        &\abs{R_t^v}^p\langle \nabla V(X_{t}^{x}), b(X_{t}^{x}) \rangle+\frac{\abs{R_t^v}^p}{2}\langle \nabla^{2} V(X_{t}^{x}), \sigma(X_{t}^{x})\sigma(X_{t}^{x})^{T}  \rangle_{\mathrm{HS}}\\
        \le &\abs{R_t^v}^pV(X_{t}^{x}) \left[\left\langle \frac{X_{t}^{x}}{\abs{X_{t}^{x}}}, b(X_{t}^{x})  \right\rangle +\frac{cdL_{2}^{2}}{2}\right] \mathbf{1}_{\{|X_{t}^{x}|\ge 1\}}\\
        & +c \abs{R_t^v}^pV(X_{t}^{x})\left[ \abs{b(X_{t}^{x})}+\frac{dL_{2}^{2}}{2}  \right] \mathbf{1}_{\{|X^{x}_{t}|<1\}}\\
        \le &C\abs{R_t^v}^pV(X_{t}^{x}) \left[\left(L_{1}+\frac{cdL_{2}^{2}}{2}-\lambda |X_{t}^{x}|^{r+1}\right) \mathbf{1}_{\{|X_{t}^{x}|\ge 1\}} + \mathbf{1}_{\{|X_{t}^{x}|<1\}}\right],
    \end{align*}
    and
    \begin{align*}
        &p|R_t^v|^{p-2}\langle \nabla_{R_t^v} \sigma (X_t^x) R_t^v,  \sigma(X_{t}^{x})\nabla V(X_{t}^{x})  \rangle \\
        \le& cp\abs{R_t^v}^pV(X_{t}^{x}) \oinorm{\sigma} \oinorm{\nabla \sigma}\le C\abs{R_t^v}^pV(X_{t}^{x}).
    \end{align*}
    
    Combining all these estimates with \eqref{eq:ItoRV} gives
    \begin{align*}
        \dif \E (\abs{R_t^v}^p  V(X_{t}^{x}))
        &\le C \E \left(\abs{R_t^v}^p  V(X_{t}^{x}) \left[ \left(C'-\lambda|X_{t}^{x}|^{r+1}+|X_{t}^{x}|^{r} \right) \mathbf{1}_{\{|X_{t}^{x}|\ge 1\}} + \mathbf{1}_{\{|X_{t}^{x}|<1\}}   \right] \right)\\
        &\le C\E\left[\abs{R_t^v}^p  V(X_{t}^{x})\right].
    \end{align*}
    Since $V(x)\ge 1$ for any $x\in \R^{d}$, it follows from Grönwall's inequality that,
    \begin{align}{\label{resultRV}}
    \E \abs{R_t^v}^p \leq \E (\abs{R_t^v}^p  V(X_{t}^{x})) \leq  \eup^{Ct} \abs{v}^p V (x).
    \end{align}
    
    \noindent (ii) By \eqref{Kvw} and It\^o's formula,
    \begin{gather}{\label{KKvw}}
    \begin{split}
    \dif \abs{K_t^{v,w}}^p=&  p|K_t^{v,w}|^{p-2}\langle K_t^{v,w},\nabla_{K_t^{v,w}} b (X_t^x) + \nabla_{R_t^v} \nabla_{R_t^w} b (X_t^x)\rangle \dif t \\
    &+\frac{1}{2}p(p-2)|K_t^{v,w}|^{p-4}|K_t^{v,w} (\nabla_{K_t^{v,w}} \sigma (X_t^x)+ \nabla_{R_t^v} \nabla_{R_t^w} \sigma (X_t^x))|^2 \dif t  \\ 
        &+\frac12p|K_t^{v,w}|^{p-2}\norm{\nabla_{K_t^{v,w}} \sigma (X_t^x)+ \nabla_{R_t^v} \nabla_{R_t^w} \sigma (X_t^x)}_{\mathrm{HS}}^{2} \dif t \\
        &+ p|K_t^{v,w}|^{p-2}\langle K_t^{v,w}, (\nabla_{K_t^{v,w}} \sigma (X_t^x)+ \nabla_{R_t^v} \nabla_{R_t^w} \sigma (X_t^x)) \dif B_{t}\rangle. 
        \end{split}
    \end{gather}
    It follows from \eqref{KKvw} and \eqref{VXt} that
    \begin{align} \label{eq:ItoKV}
        \begin{aligned}
            &\dif (\abs{K_t^{v,w}}^p  V(X_{t}^{x}))
        =\bigg{[}p|K_t^{v,w}|^{p-2}V(X_{t}^{x})\langle K_t^{v,w},\nabla_{K_t^{v,w}} b (X_t^x) + \nabla_{R_t^v} \nabla_{R_t^w} b (X_t^x)\rangle  \\
    &\qquad+\frac{1}{2}p(p-2)V(X_{t}^{x})|K_t^{v,w}|^{p-4}|K_t^{v,w} (\nabla_{K_t^{v,w}} \sigma (X_t^x)+ \nabla_{R_t^v} \nabla_{R_t^w} \sigma (X_t^x))|^2 \\ 
        &\qquad+\frac12pV(X_{t}^{x})|K_t^{v,w}|^{p-2}\norm{\nabla_{K_t^{v,w}} \sigma (X_t^x)+ \nabla_{R_t^v} \nabla_{R_t^w} \sigma (X_t^x)}_{\mathrm{HS}}^{2}  \\
        &\qquad+\abs{K_t^{v,w}}^p\langle \nabla V(X_{t}^{x}), b(X_{t}^{x}) \rangle  +\frac{\abs{K_t^{v,w}}^p}{2}\langle \nabla^{2} V(X_{t}^{x}), \sigma(X_{t}^{x})\sigma(X_{t}^{x})^{T}  \rangle_{\mathrm{HS}}\\
        & \qquad+p|K_t^{v,w}|^{p-2}\langle K_t^{v,w} (\nabla_{K_t^{v,w}} \sigma (X_t^x)+ \nabla_{R_t^v} \nabla_{R_t^w} \sigma (X_t^x)),  \sigma(X_{t}^{x})\nabla V(X_{t}^{x})  \rangle \bigg{]} \dif t +\dif M_{t},
        \end{aligned}
    \end{align}
    where $M_{t}$ is the martingale term. 

    By Assumption \ref{A1} and \ref{A2}, and $\opnorm{\nabla^2 b (x)} \leq L_1 (1 + \abs{x}^r)$, we have 
\begin{align*}
&p|K_t^{v,w}|^{p-2}V(X_{t}^{x})\langle K_t^{v,w},\nabla_{K_t^{v,w}} b (X_t^x) + \nabla_{R_t^v} \nabla_{R_t^w} b (X_t^x)\rangle  \\
    & +\frac{1}{2}p(p-2)V(X_{t}^{x})|K_t^{v,w}|^{p-4}|K_t^{v,w} (\nabla_{K_t^{v,w}} \sigma (X_t^x)+ \nabla_{R_t^v} \nabla_{R_t^w} \sigma (X_t^x))|^2  \nonumber\\ 
        & +\frac12pV(X_{t}^{x})|K_t^{v,w}|^{p-2}\norm{\nabla_{K_t^{v,w}} \sigma (X_t^x)+ \nabla_{R_t^v} \nabla_{R_t^w} \sigma (X_t^x)}_{\mathrm{HS}}^{2}\\
        \le  &p|K_t^{v,w}|^{p-1}V(X_{t}^{x})(|K_t^{v,w}|  \opnorm{\nabla b (X_t^x)} + | {R_t^v}||  {R_t^w}| \opnorm{\nabla^2 b (X_t^x)} )  \\
        & +\frac12pV(X_{t}^{x})|K_t^{v,w}|^{p-1} (|K_t^{v,w}|\norm{\nabla\sigma (X_t^x)}_{\mathrm{HS}}^{2}+ |{R_t^v}||R_t^w|\norm{\nabla^2 \sigma (X_t^x)}_{\mathrm{HS}}^{2})\\
        \le &C \abs{K_t^{v,w}}^{p-1}\left( |K_t^{v,w}|+ | {R_t^v}||  {R_t^w}|\right)V(X_{t}^{x}) \left( 1+|X_{t}^{x}|^{r} \right)\\
        \le &C \left( |K_t^{v,w}|^{p}+ (| {R_t^v}||  {R_t^w}|)^{p }\right)V(X_{t}^{x}) \left( 1+|X_{t}^{x}|^{r} \right).
    \end{align*}
    where the last inequality comes from Young's inequality. 

    Through calculations similar to those in (i), we have
    \begin{align*}
    &\abs{K_t^{v,w}}^p\langle \nabla V(X_{t}^{x}), b(X_{t}^{x}) \rangle  +\frac{\abs{K_t^{v,w}}^p}{2}\langle \nabla^{2} V(X_{t}^{x}), \sigma(X_{t}^{x})\sigma(X_{t}^{x})^{T}  \rangle_{\mathrm{HS}}\\
        \le &C\abs{K_t^{v,w}}^pV(X_{t}^{x}) \left[\left(L_{1}+\frac{cdL_{2}^{2}}{2}-\lambda |X_{t}^{x}|^{r+1}\right) \mathbf{1}_{\{|X_{t}^{x}|\ge 1\}} + \mathbf{1}_{\{|X_{t}^{x}|<1\}}\right],
    \end{align*}
    and
\begin{align*}
    &p|K_t^{v,w}|^{p-2}\langle K_t^{v,w} (\nabla_{K_t^{v,w}} \sigma (X_t^x)+ \nabla_{R_t^v} \nabla_{R_t^w} \sigma (X_t^x)),  \sigma(X_{t}^{x})\nabla V(X_{t}^{x})  \rangle\\
        \le& cp\abs{K_t^{v,w}}^{p-1}V(X_{t}^{x}) \oinorm{\sigma} (\abs{K_t^{v,w}}\oinorm{\nabla \sigma}+| {R_t^v}||  {R_t^w}|\oinorm{\nabla^2 \sigma})\\
        \le& C(\abs{K_t^{v,w}}^p+(| {R_t^v}||  {R_t^w}|)^{p })V(X_{t}^{x}).
    \end{align*}
    Combining all these estimates with \eqref{eq:ItoKV} and  the Cauchy-Schwarz inequality, we have 
     \begin{align*}
        &\E (\abs{K_t^{v,w}}^p  V(X_{t}^{x}))\\
        \le& C \E \left(\abs{K_t^{v,w}}^p  V(X_{t}^{x}) \left[ \left(C'-\lambda|X_{t}^{x}|^{r+1}+|X_{t}^{x}|^{r} \right) \mathbf{1}_{\{|X_{t}^{x}|\ge 1\}} + \mathbf{1}_{\{|X_{t}^{x}|<1\}}   \right] \right)\\
        &+\E\left[(| {R_t^v}||  {R_t^w}|)^{p } V(X_{t}^{x}) ( 1+|X_{t}^{x}|^{r} )\right]\\
        \le& C\E\left[\abs{K_t^{v,w}}^p  V(X_{t}^{x})\right]+\E\left[(| {R_t^v}||  {R_t^w}|)^{p } (V(X_{t}^{x}))^2\right]\\
        \le& C\E\left[\abs{K_t^{v,w}}^p  V(X_{t}^{x})\right]+ \left[\E | {R_t^v}| ^{2p } (V(X_{t}^{x}))^2\right]^{\frac12}\left[\E |  {R_t^w}| ^{2p } (V(X_{t}^{x}))^2\right]^{\frac12}.
    \end{align*}
    By using the same method as in the proof of \eqref{resultRV}, one can show that
    \begin{align*}
        \E \left[| {R_t^v}| ^{2p } (V(X_{t}^{x}))^2 \right]\le e^{Ct} |v|^{2p}\left(V(x)\right)^{2}, \quad \forall |v|\le 1.
    \end{align*}
    So it follows that
    \begin{align*}
         \E (\abs{K_t^{v,w}}^p  V(X_{t}^{x}))
        \le& C\E\left[\abs{K_t^{v,w}}^p  V(X_{t}^{x})\right]+ e^{C't}|v|^p|w|^p \left(V(x)\right)^{2}.
    \end{align*}
    Since $K_{0}^{v,w}=0$ and $V(x)\ge 1$, it follows from Grönwall's inequality that,
    \begin{align*} 
    \E \abs{K_t^{v,w}}^p \leq \E (\abs{K_t^{v,w}}^p  V(X_{t}^{x})) \le  \eup^{Ct} \abs{v}^p \abs{w}^p \left(V (x)\right)^{2} , \qquad \forall t > 0,
\end{align*}
Since this holds for any $p\ge 2$, by Hölder's inequality,
\begin{align*}
     \E \abs{K_t^{v,w}}^p \leq \sqrt{\E \abs{K_t^{v,w}}^{2p}}\le \sqrt{e^{Ct}|v|^{2p} |w|^{2p}V(x)^{2}}=e^{\frac{C}{2}t}|v|^p |w|^p V(x) , \qquad \forall t > 0,
\end{align*}
The proof is complete.
\end{proof}

We have the following property for the SDE \eqref{SDE}, which will be proved in Appendix \ref{appendix}.
\begin{lemma}{\label{ergodic}}
    Suppose Assumption \ref{A1} and \ref{A2} hold. Then the Markov semigroup $\{ P_t \}_{t \geq 0}$ is strongly Feller and irreducible, i.e.
    
    \noindent (a) For any $t > 0$ and $f \in \mathcal{B}_b (\R^d)$, $P_t f \in \mathcal{C}_b (\R^d)$.

    \noindent (b) For any $t > 0$, $x \in \R^d$ and nonempty open set $U \subseteq \R^d$, $P_t \mathbf{1}_{U} (x) > 0$.    
\end{lemma}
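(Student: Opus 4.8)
The plan is to derive (a) from the Bismut--Elworthy--Li formula together with the gradient/moment estimates already at hand, and (b) from a localization that reduces matters to the classical support theorem for a uniformly elliptic diffusion with bounded regular coefficients.

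\emph{Strong Feller.} For $f\in\mathcal{C}_b^1(\R^d)$ with $\|f\|_\infty\le1$ and $v\in\R^d$ with $\abs{v}\le1$, the Bismut--Elworthy--Li formula \eqref{eq:BEL}, the Cauchy--Schwarz and It\^o isometries, the bound $\oinorm{\sigma^{-1}}\le L_2$ from Assumption \ref{A2}, and the estimate $\E\abs{R_s^v}^2\le\eup^{Cs}V(x)$ from Lemma \ref{le:BEL1}(i) give
\begin{align*}
\abs{\nabla_v P_tf(x)}\le\frac1t\left(\int_0^t\E\abs{\sigma^{-1}(X_s^x)R_s^v}^2\,\dif s\right)^{1/2}\le c(t)\sqrt{V(x)},
\end{align*}
with $c(t)<\infty$ independent of $f,v$. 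Since $V$ is locally bounded, $P_tf$ is Lipschitz on each ball $B(\mathbf0,R)$ with constant $c(t,R):=c(t)\sup_{B(\mathbf0,R)}\sqrt V$ not depending on such $f$, and $\|P_tf\|_\infty\le\|f\|_\infty$; hence $P_tf\in\mathcal{C}_b(\R^d)$ for $f\in\mathcal{C}_b^1$. To extend to $f\in\mathcal{B}_b(\R^d)$ I would proceed in two steps: first, for $f\in\mathcal{C}_b$ the mollifications $f_\varepsilon:=f*\rho_\varepsilon\in\mathcal{C}_b^1$ satisfy $\|f_\varepsilon\|_\infty\le\|f\|_\infty$ and $f_\varepsilon\to f$ locally uniformly, so $P_tf_\varepsilon\to P_tf$ pointwise and the Lipschitz estimate persists in the limit; second, since the total variation distance of two probability measures on $\R^d$ is already attained over continuous bounded test functions, the bound $\abs{P_tf(x)-P_tf(x')}\le c(t,R)\abs{x-x'}$ for $f\in\mathcal{C}_b$, $\|f\|_\infty\le1$, $x,x'\in B(\mathbf0,R)$ yields $\dtv(P_t(x,\cdot),P_t(x',\cdot))\le\tfrac12c(t,R)\abs{x-x'}$ there, whence $\abs{P_tf(x)-P_tf(x')}\le c(t,R)\|f\|_\infty\abs{x-x'}$ for all $f\in\mathcal{B}_b$, which is (a).

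\emph{Irreducibility.} The mechanism is that $\sigma$ is invertible everywhere, so every smooth curve issuing from $x$ lies in the support of the path law; to accommodate the merely locally Lipschitz, superlinear drift I would truncate. Fix $R>0$, take $\chi\in\mathcal{C}_c^\infty$ with $\chi\equiv1$ on $B(\mathbf0,R)$, put $b_R:=\chi b$ (bounded, globally Lipschitz, $\mathcal{C}^1$), and let $X^{x,R}$ solve $\dif X=b_R(X)\,\dif t+\sigma(X)\,\dif B$. For this SDE the coefficients satisfy the hypotheses of the Stroock--Varadhan support theorem, and since $\sigma(\cdot)^{-1}$ exists, any $\mathcal{C}^1$ path $\psi\colon[0,t]\to\R^d$ with $\psi(0)=x$ is reproduced by the control $\dot h(s)=\sigma(\psi(s))^{-1}(\dot\psi(s)-b_R(\psi(s)))$, hence lies in the topological support of $\LL((X^{x,R}_s)_{s\in[0,t]})$ in $\mathcal{C}([0,t];\R^d)$. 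Given a nonempty open $U$, choose $y\in U$ and $\varepsilon\in(0,1)$ with $B(y,\varepsilon)\subseteq U$, a $\mathcal{C}^1$ path $\psi$ from $x$ to $y$ on $[0,t]$, and $R$ so large that $\psi([0,t])\subseteq B(\mathbf0,R-1)$; then $\PP(\sup_{s\le t}\abs{X^{x,R}_s-\psi(s)}<\varepsilon)>0$, and on that event $X^{x,R}$ never leaves $B(\mathbf0,R)$, so it coincides with $X^x$ on $[0,t]$ by pathwise uniqueness up to the exit time of $B(\mathbf0,R)$ (using the non-explosion of $X^x$, cf.\ Lemma \ref{le:Xmoment}), whence $X_t^x\in B(y,\varepsilon)\subseteq U$. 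Therefore $P_t\mathbf1_U(x)\ge\PP(\sup_{s\le t}\abs{X^{x,R}_s-\psi(s)}<\varepsilon)>0$.

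The moment bookkeeping in (a) and the mollification/regularization are routine; the main obstacle is (b), namely making the support-theorem argument legitimate for a non-globally Lipschitz, superlinearly growing drift, which is precisely what the truncation combined with non-explosion handles. (An alternative for (b) avoiding the support theorem: after the same truncation, strip off the drift of $X^{x,R}$ by a Girsanov transformation, legitimate since $\sigma^{-1}b_R$ is bounded, reducing to the driftless uniformly elliptic diffusion $\dif Z=\sigma(Z)\,\dif B$, whose transition density is smooth and strictly positive by standard parabolic theory, so $\PP(Z_t\in U)>0$; equivalence of the two laws transfers this to $X^{x,R}$, and the localization finishes the argument.)
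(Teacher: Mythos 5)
Your proposal is correct, and for part (b) it takes a genuinely different route from the paper. For (a) you and the paper argue essentially the same way: the Bismut--Elworthy--Li formula \eqref{eq:BEL} together with the bound $\E\abs{R_s^v}^2\le \eup^{Cs}V(x)$ from Lemma \ref{le:BEL1} gives a locally uniform gradient bound for $f\in\mathcal{C}_b^1$; the only difference is the passage to $f\in\mathcal{B}_b$, which the paper does by approximating $f$ almost everywhere by $\mathcal{C}_b^1$ functions, while you mollify and then use that $\dtv$ is attained on $\mathcal{C}_b$ test functions --- your version is arguably cleaner, since it avoids any discussion of null sets of the transition kernel. For (b) the paper gives a self-contained Girsanov argument: it steers the process toward $y$ on a short terminal window $[t_0,T]$ by adding the explicit interpolation drift $\bar b^\epsilon$, truncates $X^x_{t_0}$ so that $\sigma^{-1}\bar b^\epsilon$ is bounded (making the Girsanov exponential a true martingale), and concludes with a second-moment estimate plus Chebyshev; you instead truncate the drift, invoke the Stroock--Varadhan support theorem for the resulting bounded, uniformly elliptic SDE, and remove the truncation by pathwise uniqueness on the tube event. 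Your route is shorter and yields the stronger conclusion of full support of the path law, at the price of importing a classical theorem; the paper's route is elementary and quantitative. Two small caveats on your side: the control should be written with the Stratonovich-corrected drift (immaterial here, since $\sigma^{-1}$ exists and the correction is continuous, so every $\mathcal{C}^1$ path is still attainable); and in your parenthetical alternative, positivity of $\PP(X_t^{x,R}\in U)$ alone does not feed into the localization --- you need positive probability of $\{X_t^{x,R}\in U\}$ jointly with confinement in $B(\mathbf{0},R)$ up to time $t$, which is exactly what the tube event in your main argument (or positivity of the Dirichlet heat kernel on $B(\mathbf{0},R)$) provides.
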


Combining Lemma \ref{le:BEL} and Lemma \ref{ergodic}, we can obtain the following gradient estimates.
\begin{lemma}[Gradient estimates] \label{le:gradient}
    Suppose Assumption \ref{A1} and \ref{A2} hold. There exist constants $C, c > 0$ such that
    
    \noindent (i) For any $t > 0$, $x \in \R^d$ and $f \in \mathcal{C}_b^1 (\R^d)$,
    \begin{align}
        \opnorm{\nabla P_t f (x)}
        &\leq \frac{C \eup^{-c t}}{\sqrt{t \land 1}} V (x) \norm{f}_\infty,  \label{eq:gra1_tot1}\\
        \opnorm{\nabla P_t f (x)}
        &\leq C \eup^{-c t} V (x) \oinorm{\nabla f}.\label{eq:gra1_tot2}
    \end{align}
    
    \noindent (ii) Further assume $b \in \mathcal{C}^2 (\R^d; \R^d)$ and $\opnorm{\nabla^2 b (x)} \leq L_1 (1 + \abs{x}^r)$, $\forall x \in \R^d$, then for any $t > 0$, $x \in \R^d$ and $f \in \mathcal{C}_b^2 (\R^d)$,
    \begin{align} 
        \opnorm{\nabla^2 P_t f (x)}
        &\leq \frac{C \eup^{-c t}}{t \land 1} V (x)^{\frac{3}{2}} \norm{f}_\infty, \label{eq:gra21}\\
        \opnorm{\nabla^2 P_t f (x)}
        &\leq \frac{C \eup^{-c t}}{\sqrt{t \land 1}} V (x)^{\frac{3}{2}} \oinorm{\nabla f}, \label{eq:gra22}
    \end{align}
     where $V(x)$ is the smooth function defined in \eqref{Lyapu}. 
\end{lemma}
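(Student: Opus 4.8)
The plan is to prove the four gradient estimates by combining the Bismut–Elworthy–Li formula (Lemma~\ref{le:BEL}) with the moment bounds on the Jacobian and Hessian flows (Lemma~\ref{le:BEL1}), the Lyapunov moment estimate (Lemma~\ref{le:Xmoment}), and the semigroup property together with strong Feller/irreducibility (Lemma~\ref{ergodic}) to upgrade from short-time to exponential-in-time decay. I would first establish the bounds for $t\le 1$ and then bootstrap to all $t>0$.

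\textbf{Step 1: short-time bounds for $\nabla P_t f$.} For $t\in(0,1]$ I apply Lemma~\ref{le:BEL}: $\nabla_v P_t f(x)=\frac1t\E\big[f(X_t^x)\int_0^t\sca{\sigma^{-1}(X_s^x)R_s^v,\dif B_s}\big]$. By Cauchy–Schwarz and the Itô isometry, $\abs{\nabla_v P_t f(x)}\le \frac1t\norm{f}_\infty\big(\E\int_0^t\abs{\sigma^{-1}(X_s^x)R_s^v}^2\dif s\big)^{1/2}\le \frac{C}{\sqrt t}\norm{f}_\infty(\E\sup_{s\le t}\abs{R_s^v}^2)^{1/2}$, using $\oinorm{\sigma^{-1}}\le L_2$. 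Then \eqref{ineq:Rmoment1} with $p=2$ (and $t\le 1$, so $\eup^{Ct}\le\eup^C$) gives $\le \frac{C}{\sqrt t}V(x)^{1/2}\abs{v}\norm{f}_\infty$. To get the cleaner exponent $V(x)$ one can instead note $V(x)^{1/2}\le V(x)$ since $V\ge 1$; this yields \eqref{eq:gra1_tot1} for $t\le 1$. For \eqref{eq:gra1_tot2} with $f\in\mathcal{C}_b^1$, instead of BEL I use the direct representation $\nabla_v P_t f(x)=\E[\sca{\nabla f(X_t^x),R_t^v}]$, so $\abs{\nabla_v P_t f(x)}\le \oinorm{\nabla f}\,\E\abs{R_t^v}\le \oinorm{\nabla f}\,(\E\abs{R_t^v})$, and \eqref{ineq:Rmoment1} with $p=2$ plus Jensen gives $\le \eup^{C}V(x)^{1/2}\abs{v}\oinorm{\nabla f}\le C V(x)\oinorm{\nabla f}$ for $t\le1$.

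\textbf{Step 2: exponential decay for large $t$.} For $t>1$ write $P_t f=P_{t-1}(P_1 f)$; by the semigroup property $\nabla P_t f(x)=\nabla P_{t-1}(P_1 f)(x)$. Now I need the key ingredient: an $L^\infty$-contraction of the form $\norm{P_s f-\pi(f)}_\infty\le C\eup^{-cs}$ (or an oscillation bound), coming from the existence of a unique invariant measure $\pi$ with exponential ergodicity in total variation, which follows from the Lyapunov function $V$ (Lemma~\ref{le:Xmoment}) together with strong Feller and irreducibility (Lemma~\ref{ergodic}) via Harris' theorem / Doeblin-type arguments. Applying Step~1 to the function $g=P_1 f-\pi(f)$ (replacing $f$ by $g$, noting $\nabla P_{t-1}g=\nabla P_{t-1}(P_1f)$ since constants have zero gradient, and $\norm{g}_\infty\le C$) and $\oinorm{\nabla g}\le\oinorm{\nabla P_1 f}$ bounded by Step~1, then using the semigroup once more to split $t-1=(t-2)+1$ and invoking exponential ergodicity on the first factor, I obtain the decaying factor $\eup^{-c(t-2)}\le \eup^2\eup^{-ct}$. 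Combining with the $1/\sqrt{t\wedge1}$ and $V(x)$ factors from Step~1 (applied on the last unit time interval) gives \eqref{eq:gra1_tot1} and \eqref{eq:gra1_tot2} for all $t>0$.

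\textbf{Step 3: second-order estimates.} For \eqref{eq:gra21}--\eqref{eq:gra22} I differentiate the BEL formula once more, or equivalently apply BEL to $\nabla P_{t/2}f$: write $P_t f=P_{t/2}(P_{t/2}f)$ and use $\nabla^2 P_t f(x)=\nabla\big(\nabla P_{t/2}(P_{t/2}f)\big)(x)$. Applying \eqref{eq:gra1_tot2} to the outer gradient with $F=P_{t/2}f$ gives $\opnorm{\nabla^2 P_t f(x)}\le C\eup^{-ct/2}V(x)\oinorm{\nabla(\nabla P_{t/2}f)}$—but that circular; instead, apply the outer estimate \eqref{eq:gra1_tot1}-type bound to $\nabla P_{t/2}f$ treated componentwise: $\opnorm{\nabla^2 P_t f(x)}=\opnorm{\nabla P_{t/2}(\nabla P_{t/2}f)(x)}\le \frac{C\eup^{-ct/2}}{\sqrt{(t/2)\wedge1}}V(x)\norm{\nabla P_{t/2}f}_\infty$, then bound $\norm{\nabla P_{t/2}f}_\infty$ by \eqref{eq:gra1_tot1} ($\le\frac{C\eup^{-ct/2}}{\sqrt{(t/2)\wedge1}}\oinorm{V}\norm f_\infty$—but $V$ is unbounded, so this needs care). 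To avoid the unbounded $V$, I instead carry $V(x)$ through directly by using a BEL formula with the second-order correction: differentiating \eqref{eq:BEL} in $x$ produces two terms, one with $K_t^{v,w}$ and one with $R_t^v\otimes R_t^w$ and an extra stochastic integral. Bounding these via Cauchy–Schwarz, \eqref{ineq:Rmoment1}, \eqref{ineq:Rmoment2} with $p$ slightly above $2$, and Lemma~\ref{le:Xmoment}, and tracking the powers of $V$ (each Jacobian factor contributes $V^{1/2}$, the Hessian factor $V^{1/2}$, so together $V^{3/2}$), gives the short-time bounds with $V(x)^{3/2}$ and $1/t$ (two copies of $1/\sqrt t$ from the two time-integrations when one splits $[0,t]=[0,t/2]\cup[t/2,t]$). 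The large-time decay is then obtained exactly as in Step~2 via the semigroup property and exponential ergodicity.

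\textbf{Main obstacle.} The delicate point is Step~2: converting the short-time polynomial-in-$V$ bounds into genuinely exponentially decaying bounds uniformly in $x$. This requires an exponential ergodicity statement (total-variation or $V$-weighted) for $P_t$, which is not stated explicitly in the excerpt but follows from the Lyapunov estimate of Lemma~\ref{le:Xmoment} combined with the strong Feller property and irreducibility of Lemma~\ref{ergodic}; one must be careful that the ergodic contraction is in a norm compatible with the factor $V(x)$ already present, and that splitting $t$ into unit intervals does not accumulate constants. The second subtlety is in Step~3: keeping the power of $V$ at exactly $3/2$ (rather than $2$) requires the Hölder trick already used at the end of Lemma~\ref{le:BEL1}(ii), choosing the moment exponent in \eqref{ineq:Rmoment2} just above $2$ and balancing against the $V$-moment from Lemma~\ref{le:Xmoment}.
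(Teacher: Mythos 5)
Your strategy is essentially the paper's: BEL plus the Jacobian/Hessian moments for the short-time bounds, semigroup factorisation plus ergodicity for the exponential decay, and second-order BEL for the Hessian. Step~1 is correct (and your direct route to \eqref{eq:gra1_tot2} via $\nabla_v P_t f(x)=\E\sca{\nabla f(X_t^x),R_t^v}$ and \eqref{ineq:Rmoment1} is a legitimate, slightly simpler variant of the paper's $f(X_t^x)-f(x)$ trick inside BEL). Step~3's overall shape — split $P_t=P_{t/2}P_{t/2}$, differentiate the BEL representation across the first half to produce a $\nabla_{R_{t/2}^v}P_{t/2}f$ term, a $\nabla(\sigma^{-1})$ term and a $K_s^{v,w}$ term, then balance the $V$-powers by H\"older — is exactly the paper's $I_1,I_2,I_3$ computation, and your observation that one cannot simply iterate the gradient bound because $V$ is unbounded is the right reason the paper does not do that.

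However, Step~2 contains a genuine gap. You invoke an $L^\infty$-contraction $\norm{P_sf-\pi(f)}_\infty\le C\eup^{-cs}$ and then propose to apply Step~1 to $g=P_{t-1}f-\pi(f)$ and exploit $\norm{g}_\infty$. Under Assumptions~\ref{A1}--\ref{A2} that contraction is false: the process is not uniformly (Doeblin) ergodic, only $V$-uniformly ergodic, so the decay is of the form $\abs{P_sf(x)-\pi(f)}\le C\eup^{-cs}(1+\abs{x}^2)(\norm f_\infty\wedge\oinorm{\nabla f})$ and depends on $x$; the unweighted supremum $\norm{P_sf-\pi(f)}_\infty$ stays of order $\norm f_\infty$ and does not decay. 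Feeding $\norm g_\infty$ into your Step~1 bound therefore produces $CV(x)\norm f_\infty$ with no $\eup^{-ct}$ factor. The paper resolves this by not treating the ergodic input as an $L^\infty$ bound at all: it uses that the BEL stochastic integral $\int_0^1\sca{\sigma^{-1}(X_s^x)R_s^v,\dif B_s}$ has mean zero to insert the constant $\pi(f)$ \emph{inside} the expectation, obtaining $\nabla_v P_tf(x)=\E\bigl[(P_{t-1}f(X_1^x)-\pi(f))\int_0^1\sca{\sigma^{-1}R^v,\dif B}\bigr]$, and then applies the \emph{pointwise} weighted ergodic bound at the random point $X_1^x$, followed by Cauchy--Schwarz against $\sqrt{\E(1+\abs{X_1^x}^4)}$ and the It\^o isometry — the polynomial weight is absorbed by Lemma~\ref{le:Xmoment} rather than by a (nonexistent) uniform bound. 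The same fix is needed in your Step~3 large-time part, as it inherits the Step~2 argument. If you replace your $\norm{g}_\infty$ plug-in by this mean-zero insertion and pointwise-weighted-bound-plus-Cauchy--Schwarz mechanism, your proof closes.
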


\begin{proof}
\noindent (i) For $0 < t < 1$, Lemma \ref{le:BEL} and \ref{le:BEL1}, Assumption \ref{A2} show that for any $v \in \R^d$, $\abs{v} \leq 1$,
\begin{gather} \label{eq:le6pr1}
    \begin{split}
    \abs{\nabla_v P_t f (x)}
    &= \frac{1}{t} \abs{\E \left[ f (X_t^x) \int_0^t \sca{\sigma^{-1} (X_s^x) R_s^v, \dif B_s} \right]} \\
    &\leq \frac{1}{t} \norm{f}_\infty \sqrt{\E \abs{\int_0^t \sca{\sigma^{-1} (X_s^x) R_s^v, \dif B_s}}^2} \\
    &\leq \frac{C}{\sqrt{t}} \sqrt{V (x)} \norm{f}_\infty.
    \end{split}
\end{gather}
Combining Lemma \ref{le:onestep} and \ref{le:BEL}, and Assumption \ref{A2}, for any $v \in \R^d$, $\abs{v} \leq 1$, we have
\begin{gather} \label{eq:le6pr2}
    \begin{split}
    \abs{\nabla_v P_t f (x)}
    &= \frac{1}{t} \abs{\E \left[ \left( f (X_t^x) - f (x) \right) \int_0^t \sca{\sigma^{-1} (X_s^x) R_s^v, \dif B_s} \right]} \\
    &\leq \frac{1}{t} \oinorm{\nabla f} \sqrt{\E \abs{X_t^x - x}^2} \sqrt{\E \abs{\int_0^t \sca{\sigma^{-1} (X_s^x) R_s^v, \dif B_s}}^2} \\
    &\leq C (1 + \abs{x}^{r+1}) \sqrt{V (x)} \oinorm{\nabla f}.
    \end{split}
\end{gather}

Then we turn to the case $t \geq 1$. According to Lemma \ref{le:BEL},
\begin{align} \label{eq:Bismut}
   \begin{split}
    \nabla_v P_t f (x)
    &= \nabla_v P_1 (P_{t-1} f) (x) \\
    &= \E \left[ P_{t-1} f (X_1^x) \int_0^1 \sca{\sigma^{-1} (X_s^x) R_s^v, \dif B_s} \right] \\
    &= \E \left[ \left( P_{t-1} f (X_1^x) - \int_{\R^d} f (y) \mu (\dif y) \right) \int_0^1 \sca{\sigma^{-1} (X_s^x) R_s^v, \dif B_s} \right],
   \end{split}
\end{align}
where $\mu$ denotes the stationary distribution of $\{ X_t^x \}_{t \geq 0}$. It follows from Lemma \ref{le:Xmoment} that $\E \abs{X_t^x}^2 \leq \eup^{-\lambda t} \abs{x}^2 + C$, $\forall t > 0$, so Lemma \ref{ergodic} and \cite[Theorem 2.5 (a)]{BB2006} shows
\begin{align*}
    \abs{P_{t-1} f (X_1^x) - \int_{\R^d} f (y) \mu (\dif y)}
    \leq C \eup^{-ct} (1 + \abs{X_1^x}^2) \sup_{z \in \R^d} \frac{\abs{f (z)}}{1 + \abs{z}^2}.
\end{align*}
Notice that the left-hand side of above inequality does not change if we replace $f$ with $f - f (0)$, and
\begin{align*}
    \sup_{z \in \R^d} \frac{\abs{f (z)}}{1 + \abs{z}^2} \leq \norm{f}_\infty, \qquad
    \sup_{z \in \R^d} \frac{\abs{f (z) - f (0)}}{1 + \abs{z}^2} \leq \frac{1}{2} \oinorm{\nabla f}.
\end{align*}
Hence, it follows that
\begin{align*}
    \abs{P_{t-1} f (X_1^x) - \int_{\R^d} f (y) \mu (\dif y)}
    \leq C \eup^{-ct} (1 + \abs{X_1^x}^2)   \left( \norm{f}_\infty \land \oinorm{\nabla f}  \right),
\end{align*}
which, together with \eqref{eq:Bismut}, Lemma \ref{le:BEL} and \ref{le:Xmoment}, implies that
\begin{gather} \label{eq:le6pr3}
    \begin{split}
    &\abs{\nabla_v P_t f (x)}\\
    \leq& C \eup^{-ct} \left( \norm{f}_\infty \land \oinorm{\nabla f}  \right) \E \left[ (1 + \abs{X_1^x}^2) \abs{\int_0^1 \sca{\sigma^{-1} (X_s^x) R_s^v, \dif B_s}} \right] \\
    \leq &C \eup^{-ct}  \left( \norm{f}_\infty \land \oinorm{\nabla f}  \right) \sqrt{1 + \E \abs{X_1^x}^4} \sqrt{\E \abs{\int_0^1 \sca{\sigma^{-1} (X_s^x) R_s^v, \dif B_s}}^2} \\
    \leq& C \eup^{-ct} (1 + \abs{x}^2) \sqrt{V (x)}  \left( \norm{f}_\infty \land \oinorm{\nabla f}  \right),
    \end{split}
\end{gather}
for any $v \in \R^d$, $\abs{v} \leq 1$ and $t \geq 1$. 

Now, the proof of (i) is finished by combining \eqref{eq:le6pr3} with \eqref{eq:le6pr1} and \eqref{eq:le6pr2}.

 \noindent (ii) According to Lemma \ref{le:BEL}, for $0 < t < 1$ and any $v, w \in \R^d$, $\abs{v}, \abs{w} \leq 1$,
\begin{align*}
    \nabla_v \nabla_w P_t f (x)
    &= \nabla_v \left[ \nabla_w P_{\frac{t}{2}} \left( P_{\frac{t}{2}} f \right) \right] (x) \\
    &= \frac{2}{t} \nabla_v \E \left[ P_{\frac{t}{2}} f \left( X_{\frac{t}{2}}^x \right) \int_0^{\frac{t}{2}} \sca{\sigma^{-1} (X_s^x) R_s^w, \dif B_s} \right] \\
    &= \frac{2}{t} \E \left[ \nabla_{R_{\frac{t}{2}}^v} P_{\frac{t}{2}} f \left( X_{\frac{t}{2}}^x \right) \int_0^{\frac{t}{2}} \sca{\sigma^{-1} (X_s^x) R_s^w, \dif B_s} \right] \\
    &\quad + \frac{2}{t} \E \left[ P_{\frac{t}{2}} f \left( X_{\frac{t}{2}}^x \right) \int_0^{\frac{t}{2}} \sca{\nabla_{R_s^v} (\sigma^{-1}) (X_s^x) R_s^w, \dif B_s} \right] \\
    &\quad + \frac{2}{t} \E \left[ P_{\frac{t}{2}} f \left( X_{\frac{t}{2}}^x \right) \int_0^{\frac{t}{2}} \sca{\sigma^{-1} (X_s^x) K_s^{v,w}, \dif B_s} \right] \\
    &=: I_1 + I_2 + I_3,
\end{align*}
where $R^{w}_{s}$ and $K^{v,w}_{s}$ are defined as in \eqref{def:R,K}.

Let us prove \eqref{eq:gra21} first. For $I_{1}$, it follows from \eqref{eq:le6pr1} and the Cauchy-Schwarz inequality,
\begin{align*}
    \abs{I_1}
    &\leq \frac{C}{t \sqrt{t}} \norm{f}_\infty \E \left[ \abs{R_{\frac{t}{2}}^v} \sqrt{V \left( X_{\frac{t}{2}}^x \right)} \abs{\int_0^{\frac{t}{2}} \sca{\sigma^{-1} (X_s^x) R_s^w, \dif B_s}} \right] \\
    &\leq \frac{C}{t \sqrt{t}} \norm{f}_\infty \sqrt{\E \left[ \abs{R_{\frac{t}{2}}^v}^2 V \left( X_{\frac{t}{2}}^x \right) \right]} \sqrt{\E \abs{\int_0^{\frac{t}{2}} \sca{\sigma^{-1} (X_s^x) R_s^w, \dif B_s}}^2} \\
    &\leq \frac{C}{t} V (x) \norm{f}_\infty.
\end{align*}
For $I_{2}$ and $I_{3}$, by \eqref{ineq:Rmoment1} and \eqref{ineq:Rmoment2}, we have
\begin{gather*}
    \abs{I_2}
    \leq \frac{2}{t} \norm{P_{\frac{t}{2}} f}_\infty \sqrt{\E \abs{\int_0^\frac{t}{2} \sca{\nabla_{R_s^v} (\sigma^{-1}) (X_s^x) R_s^w, \dif B_s}}^2}
    \leq \frac{C}{\sqrt{t}} \sqrt{V (x)} \norm{f}_\infty, \\
    \abs{I_3}
    \leq \frac{2}{t} \norm{P_{\frac{t}{2}} f}_\infty \sqrt{\E \abs{\int_0^\frac{t}{2} \sca{\sigma^{-1} (X_s^x) K_s^{v,w}, \dif B_s}}^2}
    \leq \frac{C}{\sqrt{t}} \sqrt{V (x)} \norm{f}_\infty.
\end{gather*}
Combining above estimates of $I_1$, $I_2$, and $I_3$ derives
\begin{align} \label{eq:le6pr4}
    \abs{\nabla_v \nabla_w P_t f (x)}
    \leq \frac{C}{t} V (x) \norm{f}_\infty.
\end{align}
\par
 Now, let us prove \eqref{eq:gra22} for $0 < t < 1$. For $I_{1}$, it follows from \eqref{eq:le6pr2}, the Cauchy-Schwarz inequality and the inequality $1+|x|^{r+1}\le CV(x)$ for some constant $C$ that  
\begin{align*}
    \abs{I_{1}} 
    &\le \frac{2}{t} \E \left[\abs{ \nabla_{R_{\frac{t}{2}}^v} P_{\frac{t}{2}} f \left( X_{\frac{t}{2}}^x \right)} \abs{\int_0^{\frac{t}{2}} \sca{\sigma^{-1} (X_s^x) R_s^w, \dif B_s}} \right] \\
    &\le  \frac{C}{t} \oinorm{\nabla f} \E \left[\left(1+\abs{X_{\frac{t}{2}}^x}^{r+1} \right) \sqrt{V(X_{\frac{t}{2}}^x)} \abs{\int_0^{\frac{t}{2}} \sca{\sigma^{-1} (X_s^x) R_s^w, \dif B_s}} \right]\\
    &\le \frac{C}{t} \oinorm{\nabla f} \left(\E  \left(1+\abs{X_{\frac{t}{2}}^x}^{r+1} \right)^{4}  \right)^{\frac{1}{4}} \left( \E\abs{ V(X_{\frac{t}{2}}^x)}^{2}\right)^{\frac{1}{4}} \\
    &\quad \times\left(\E  \abs{\int_0^{\frac{t}{2}} \sca{\sigma^{-1} (X_s^x) R_s^w, \dif B_s}}^{2} \right)^{\frac{1}{2}}\\
    &\le \frac{C}{\sqrt{t}} V (x)^{\frac{3}{2}} \oinorm{\nabla f}.
\end{align*}
For $I_{2}$ and $I_{3}$, it follows from the Cauchy-Schwarz inequality, and \eqref{eq:le6pr2} that
\begin{align*}
    \abs{I_2} + \abs{I_3} 
    \leq& \frac{2}{t} \E \left[ \abs{P_{\frac{t}{2}} f \left( X_{\frac{t}{2}}^x \right) -P_{\frac{t}{2}}f(x)} \abs{\int_0^{\frac{t}{2}} \sca{\nabla_{R_s^v} (\sigma^{-1}) (X_s^x) R_s^w, \dif B_s}} \right] \\
    &  + \frac{2}{t} \E \left[  \abs{P_{\frac{t}{2}} f \left( X_{\frac{t}{2}}^x \right) -P_{\frac{t}{2}}f(x)} \abs{\int_0^{\frac{t}{2}} \sca{\sigma^{-1} (X_s^x) K_s^{v,w}, \dif B_s}} \right]\\
    \le &\frac{2}{t}\E \left[ \left(\int_0^1 \opnorm{\nabla P_{\frac{t}{2}} f((1-r)X_{\frac{t}{2}}^x+rx)} \dif r \right)\abs{ X_{\frac{t}{2}}^x-x } \abs{\int_0^{\frac{t}{2}} \sca{\nabla_{R_s^v} (\sigma^{-1}) (X_s^x) R_s^w, \dif B_s}} \right] \\
    &  + \frac{2}{t} \E \left[\left(\int_0^1 \opnorm{\nabla P_{\frac{t}{2}} f((1-r)X_{\frac{t}{2}}^x+rx)} \dif r\right) \abs{ X_{\frac{t}{2}}^x-x } \abs{\int_0^{\frac{t}{2}} \sca{\sigma^{-1} (X_s^x) K_s^{v,w}, \dif B_s}} \right]\\
    \le &\frac{2}{t} \left( \E\left(\int_0^1 \opnorm{\nabla P_{\frac{t}{2}} f((1-r)X_{\frac{t}{2}}^x+rx)} \dif r \right)^4\right)^{\frac14} \left(\E\abs{ X_{\frac{t}{2}}^x-x }^4  \right)^{\frac14} \\
    & \cdot\left[\left(\E \abs{\int_0^{\frac{t}{2}} \sca{\nabla_{R_s^v} (\sigma^{-1}) (X_s^x) R_s^w, \dif B_s}}^2 \right)^{\frac12}+\left(\E \abs{\int_0^{\frac{t}{2}} \sca{\sigma^{-1} (X_s^x) K_s^{v,w}, \dif B_s}}^2 \right)^{\frac12}\right]\\
     \leq& C (1 + \abs{x}^{r+1}) V (x) \oinorm{\nabla f},
\end{align*}
where the last inequality comes from Lemma \ref{le:Xmoment}, \ref{le:onestep} and \ref{le:BEL1}.

Combining above estimates of $I_1$, $I_2$, and $I_3$ derives
\begin{align} \label{eq:le6pr5}
    \abs{\nabla_v \nabla_w P_t f (x)}
    \leq \frac{C}{\sqrt{t}} V (x)^{\frac{3}{2}} \oinorm{\nabla f}.
\end{align}

Then we turn to the case $t \geq 1$. We still have
\begin{align*}
    \nabla_v \nabla_w P_t f (x)
    &= 2 \E \left[ \nabla_{R_{\frac{1}{2}}^v} P_{t - \frac{1}{2}} f \left( X_{\frac{1}{2}}^x \right) \int_0^{\frac{1}{2}} \sca{\sigma^{-1} (X_s^x) R_s^w, \dif B_s} \right] \\
    &\quad + 2 \E \left[ \left( P_{t - \frac{1}{2}} f \left( X_{\frac{1}{2}}^x \right) - \int_{\R^d} f (y) \mu (\dif y) \right) \int_0^{\frac{1}{2}} \sca{\nabla_{R_s^v} (\sigma^{-1}) (X_s^x) R_s^w, \dif B_s} \right] \\
    &\quad + 2 \E \left[ \left( P_{t - \frac{1}{2}} f \left( X_{\frac{1}{2}}^x \right) - \int_{\R^d} f (y) \mu (\dif y) \right) \int_0^{\frac{1}{2}} \sca{\sigma^{-1} (X_s^x) K_s^{v,w}, \dif B_s} \right] \\
    &=: I_4 + I_5 + I_6.
\end{align*}
By \eqref{eq:gra1_tot1} and \eqref{eq:gra1_tot2}, we have
\begin{align*}
    \abs{\nabla_v P_{t - \frac{1}{2}} f \left( X_{\frac{1}{2}}^x \right)}
    \leq C \eup^{-ct} |v|V \left( X_{\frac{1}{2}}^x \right) \left( \norm{f}_\infty \land \oinorm{\nabla f} \right),
\end{align*}
and according to Lemma \ref{ergodic} and \cite[Theorem 2.5 (a)]{BB2006}, it can be shown that
\begin{align*}
    \abs{P_{t - \frac{1}{2}} f \left( X_{\frac{1}{2}}^x \right) - \int_{\R^d} f (y) \mu (\dif y)}
    \leq C \eup^{-ct} \left( 1 + \abs{X_{\frac{1}{2}}^x}^2 \right) \left( \norm{f}_\infty \land \oinorm{\nabla f} \right),
\end{align*}
which implies
\begin{align*}
    \abs{I_4}
    &\leq C \eup^{-ct} V (x)^{\frac{3}{2}} \left( \norm{f}_\infty \land \oinorm{\nabla f} \right), \\
    \abs{I_5}
    &\leq C \eup^{-ct} (1 + \abs{x}^2) \sqrt{V (x)} \left( \norm{f}_\infty \land \oinorm{\nabla f} \right), \\
    \abs{I_6}
    &\leq C \eup^{-ct} (1 + \abs{x}^2) V (x) \left( \norm{f}_\infty \land \oinorm{\nabla f} \right).
\end{align*}
So we get
\begin{align} \label{eq:le6pr6}
    \abs{\nabla_v \nabla_w P_t f (x)}
    \leq C \eup^{-ct} V (x)^{\frac{3}{2}} \left( \norm{f}_\infty \land \oinorm{\nabla f} \right),
\end{align}
for any $v, w \in \R^d$, $\abs{v}, \abs{w} \leq 1$ and $t \geq 1$. 

The desired result follows from \eqref{eq:le6pr4}, \eqref{eq:le6pr5}, and \eqref{eq:le6pr6}.
\end{proof}

\section{Proof of Main Results}{\label{proof}}
In main theorems of this article, i.e. Theorem \ref{thm1} and \ref{thm2}, our goal is to prove that for any $\alpha \in (0, 1 / 2)$, there exists the constant $C  > 0$ such that,
\begin{align*}
        \W_1 (\LL (X_{t_n}), \LL (Y_{t_n})) &\leq C \eta_n^\alpha, \quad \forall n \geq 1, \\
        \dtv (\LL (X_{t_n}), \LL (Y_{t_n})) &\leq C \eta_n^\alpha, \quad \forall n \geq 1.
    \end{align*}

By the Kantorovich-Rubinstein theorem \cite{Villani2003} and a standard approximation method, it is sufficient to show that,
\begin{align*}
    \abs{\E f(X_{t_n}) - \E f(Y_{t_n})} \leq C \eta_n^\alpha \left( \norm{f}_\infty \land \oinorm{\nabla f} \right), \quad \forall n \geq 1, \; f \in \mathcal{C}_{b}^2 (\R^d).
\end{align*}
For fixed $n \geq 1$ and $f \in \mathcal{C}_{b}^2 (\R^d)$, by the domino decomposition, we have 
\begin{align} \label{eq:thm1pr1}
    \begin{split}
    \E f(X_{t_n}) - \E f(Y_{t_n})
    &= P_{0, t_n} f (x_0) - Q_{0, t_n} f (x_0) \\
    &= \sum_{k = 1}^{n} Q_{0, t_{k-1}} (P_{t_{k-1}, t_k} - Q_{t_{k-1}, t_k}) P_{t_k, t_n} f (x_0) \\
    &= \sum_{k = 1}^{n} \E \left[ (P_{t_{k-1}, t_k} - Q_{t_{k-1}, t_k}) P_{t_k, t_n} f (Y_{t_{k-1}}) \right].
    \end{split}
\end{align}

Based on \eqref{eq:thm1pr1}, we provide an estimate for the final step (i.e., $\lvert (P_{t_{n-1}, t_n} - Q_{t_{n-1}, t_n}) f (x) \rvert$) first, which shows in Lemma \ref{le:laststep}, and then provide the complete proof.
\subsection{The estimate of the last step}
\begin{lemma} \label{le:laststep}
    Suppose Assumption \ref{A1} and \ref{A2} hold. There exists a constant $C > 0$ such that for any $x\in \R^{d}$, $n\ge 1$ and $f \in \mathcal{C}_b^2 (\R^d)$
    \begin{align*}
        \abs{(P_{t_{n-1}, t_n} - Q_{t_{n-1}, t_n}) f (x)}
        \leq C \sqrt{\eta_n}(1+|x|^{2r+1})V(x)  \norm{f}_\infty.
    \end{align*}
     where $V(x)$ is  a smooth function defined in \eqref{Lyapu}. 
\end{lemma}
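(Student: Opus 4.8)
The plan is to reduce the bound to a weak (Kolmogorov-type) comparison and then exploit the explicit Gaussian law of the one-step scheme. First, we may assume $\eta_n\le\eta_0$ for a suitably small $\eta_0>0$ (to be fixed along the way): for $\eta_n>\eta_0$ the bound is trivial, since $\abs{(P_{t_{n-1},t_n}-Q_{t_{n-1},t_n})f(x)}\le2\norm f_\infty\le2\eta_0^{-1/2}\sqrt{\eta_n}\,(1+\abs x^{2r+1})V(x)\norm f_\infty$. Set $u(s,y):=P_{s,t_n}f(y)=P_{t_n-s}f(y)$ for $s\in[t_{n-1},t_n]$; then $\abs u\le\norm f_\infty$, $u(s,\cdot)\to f$ locally uniformly as $s\uparrow t_n$ (because $\abs{P_tg(y)-g(y)}\le\oinorm{\nabla g}\,\E\abs{X_t^{y}-y}$, which tends to $0$ uniformly on compacts), and by standard interior parabolic regularity — using $b\in\mathcal C^1$, $\sigma\in\mathcal C^2$ and the uniform ellipticity in Assumption \ref{A2} — $u\in\mathcal C^{1,2}$ on $[t_{n-1},t_n)\times\R^d$, solving $\partial_s u+\mathcal L u=0$ with $\mathcal L g=\sca{b,\nabla g}+\tfrac12\sca{\sigma\sigma^{T},\nabla^2 g}_{\mathrm{HS}}$. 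Write $Y_s:=Y_{t_{n-1},s}^x$; by \eqref{oneY} it is Gaussian, $Y_s\sim\mathcal N(\mu_s,\Sigma_s)$ with $\mu_s=x+\tilde b\,(s-t_{n-1})$, $\Sigma_s=(s-t_{n-1})\,a(x)$, where $\tilde b:=b(x)/(1+\eta_n^\alpha\opnorm{\nabla b(x)})$ and $a(\cdot):=\sigma(\cdot)\sigma(\cdot)^{T}$, and its generator is the frozen operator $\tilde{\mathcal L}g=\sca{\tilde b,\nabla g}+\tfrac12\sca{a(x),\nabla^2 g}_{\mathrm{HS}}$.

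Applying It\^o's formula to $s\mapsto u(s,Y_s)$ on $[t_{n-1},t_n-\epsilon]$, taking expectations (the martingale part has zero mean on this subinterval), using $\partial_s u=-\mathcal L u$, and letting $\epsilon\downarrow0$ (justified by $\abs u\le\norm f_\infty$ and the local uniform convergence of $u(s,\cdot)$), I obtain
\begin{align*}
&Q_{t_{n-1},t_n}f(x)-P_{t_{n-1},t_n}f(x)\\
&\qquad=\lim_{\epsilon\downarrow0}\int_{t_{n-1}}^{t_n-\epsilon}\E\Big[\sca{\tilde b-b(Y_s),\nabla u(s,Y_s)}+\tfrac12\sca{a(x)-a(Y_s),\nabla^2 u(s,Y_s)}_{\mathrm{HS}}\Big]\dif s=:\mathrm I+\mathrm{II},
\end{align*}
where $\mathrm I$ and $\mathrm{II}$ collect the first- and second-order contributions.

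For $\mathrm I$ I would bound $\abs{\tilde b-b(Y_s)}\le\tfrac{\eta_n^\alpha\opnorm{\nabla b(x)}}{1+\eta_n^\alpha\opnorm{\nabla b(x)}}\abs{b(x)}+\abs{b(x)-b(Y_s)}\lesssim\eta_n^\alpha(1+\abs x^{2r+1})+(1+\abs x^r+\abs{Y_s}^r)\abs{Y_s-x}$ using \eqref{eq:A12}, \eqref{eq:A13} and $\opnorm{\nabla b(x)}\le2L_1(1+\abs x^r)$, then combine this with the gradient bound \eqref{eq:gra1_tot1}, the one-step moment bound Lemma \ref{le:onestep}(i), the moment estimates $\E\abs{Y_s}^{pr}\lesssim1+\abs x^{pr}$ and $\E V(Y_s)^p\lesssim V(x)^p$ (valid for $\eta_n$ small, exactly as in the proof of Lemma \ref{le:Ymoment}), and H\"older's inequality. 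Since $\int_{t_{n-1}}^{t_n}(t_n-s)^{-1/2}\dif s=2\sqrt{\eta_n}$ and $\int_{t_{n-1}}^{t_n}(s-t_{n-1})^{1/2}(t_n-s)^{-1/2}\dif s=\tfrac\pi2\eta_n$, this gives $\abs{\mathrm I}\lesssim(\eta_n^{\alpha+1/2}+\eta_n)(1+\abs x^{2r+1})V(x)\norm f_\infty\lesssim\sqrt{\eta_n}\,(1+\abs x^{2r+1})V(x)\norm f_\infty$.

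The main obstacle is the second-order term $\mathrm{II}$: a naive estimate diverges, since $\opnorm{\nabla^2 u(s,\cdot)}$ is of size $(t_n-s)^{-1}$ (cf.\ \eqref{eq:gra21}) while $\E\abs{a(x)-a(Y_s)}\lesssim(s-t_{n-1})^{1/2}$, and $\int_{t_{n-1}}^{t_n}(s-t_{n-1})^{1/2}(t_n-s)^{-1}\dif s=+\infty$. To avoid differentiating $u$ twice I would instead integrate by parts against the Gaussian: for each fixed $s<t_n$,
\begin{align*}
&\E\sca{a(x)-a(Y_s),\nabla^2 u(s,Y_s)}_{\mathrm{HS}}=\int_{\R^d}\sca{a(x)-a(y),\nabla^2 u(s,y)}_{\mathrm{HS}}\gamma_s(y)\dif y\\
&\qquad=\int_{\R^d}u(s,y)\sum_{i,j}\partial_i\partial_j\big[(a(x)-a(y))_{ij}\,\gamma_s(y)\big]\dif y,
\end{align*}
where $\gamma_s$ is the (non-degenerate, by Assumption \ref{A2}) Gaussian density of $Y_s$ and the boundary terms vanish because $\gamma_s$ and its derivatives decay faster than the at-most-exponential growth of $u$ and $\nabla u$. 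Using $\abs u\le\norm f_\infty$ and the Leibniz rule: the term with both derivatives on $a$ contributes $\lesssim\int\opnorm{\nabla^2 a}\,\gamma_s\lesssim1$; a term with one derivative on $a$ and one on $\gamma_s$ contributes $\lesssim\int\abs{\nabla\gamma_s}\lesssim(s-t_{n-1})^{-1/2}$; and the term $(a(x)-a(y))_{ij}\,\partial_i\partial_j\gamma_s(y)$ contributes $\lesssim\int\abs{y-x}\,\abs{\nabla^2\gamma_s(y)}\dif y\lesssim(s-t_{n-1})^{-1/2}+\abs{\tilde b}(s-t_{n-1})\cdot(s-t_{n-1})^{-1}\lesssim(s-t_{n-1})^{-1/2}+(1+\abs x^{r+1})$, all the Gaussian estimates using only $\opnorm{a(x)}\vee\opnorm{a(x)^{-1}}\le L_2^2$, the boundedness of $\opnorm{\nabla a}$ and $\opnorm{\nabla^2 a}$, the Lipschitz bound $\abs{a(x)-a(y)}\le2L_2^2\abs{x-y}$ (all from Assumption \ref{A2}), and $\abs{\tilde b}\le\abs{b(x)}\lesssim1+\abs x^{r+1}$. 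After this integration by parts the $s$-integrand is absolutely integrable up to $t_n$, so the $\epsilon\downarrow0$ limit is harmless; integrating the dominant term $(s-t_{n-1})^{-1/2}$ over $[t_{n-1},t_n]$ produces $2\sqrt{\eta_n}$, whence $\abs{\mathrm{II}}\lesssim\big(\sqrt{\eta_n}+(1+\abs x^{r+1})\eta_n\big)\norm f_\infty\lesssim\sqrt{\eta_n}\,(1+\abs x^{r+1})\norm f_\infty$. Adding the estimates for $\mathrm I$ and $\mathrm{II}$ and using $V(x)\ge1$, $1+\abs x^{2r+1}\ge1+\abs x^{r+1}$ yields the claim; the delicate point is precisely the $\mathrm{II}$-step, where the double integration by parts trades the non-integrable $(t_n-s)^{-1}$ singularity of $\nabla^2 P_{t_n-s}f$ for the integrable $(s-t_{n-1})^{-1/2}$ cost of differentiating the short-time Gaussian density.
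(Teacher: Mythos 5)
Your argument is correct and follows essentially the same strategy as the paper's proof: both derive the exact identity $P_{\eta_n}f(x)-\tilde Q_{\eta_n}f(x)=\int_0^{\eta_n}\E(\mathcal A^{P}-\mathcal A^{\tilde Q})(P_sf)(\tilde Y_{\eta_n-s}^x)\,\dif s$ (you via It\^o applied to $u(s,y)=P_{t_n-s}f(y)$ along the frozen-coefficient process, the paper via Duhamel's principle --- these are the same computation), both bound the drift term with $\nabla P_sf\lesssim s^{-1/2}V\norm f_\infty$ from \eqref{eq:gra1_tot1}, and both handle the second-order diffusion term by integrating by parts against the explicit Gaussian density of the one-step scheme. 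The one place you deviate is in the diffusion term: the paper moves just one derivative onto $\tilde p_{t-s}$ and retains $\nabla P_sf$, paying $s^{-1/2}V(y)$ inside the Gaussian integral, whereas you move both derivatives onto $\gamma_s$ and use only $\abs u\le\norm f_\infty$, so the resulting integrand $\bigl[\oinorm{\nabla^2 a}+\oinorm{\nabla a}\abs{\nabla\gamma_s}+\abs{a(x)-a(y)}\abs{\nabla^2\gamma_s}\bigr]$ is bounded without any $V(y)$ weight. This small variation is valid (the boundary terms vanish since $u$ is bounded and $\nabla u$ has at most exponential growth by \eqref{eq:gra1_tot1}, while the Gaussian decays super-exponentially) and even yields a slightly cleaner estimate for the second-order piece, at the modest cost of introducing the second Gaussian derivative $\nabla^2\gamma_s$ into the book-keeping.
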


\begin{proof}
    Let $\{\Tilde{Q}_{t}\}_{\{t\ge 0\}}$ be the semigroup defined by
    \begin{align*}
        \Tilde{Q}_{t}f(x):=\E \left[f(\Tilde{Y}_{t}^{x}) \right], \quad \forall f\in \mathcal{C}_b^2 (\R^d),
    \end{align*}
    where $\Tilde{Y}_{t}^{x}$ is the stochastic process given by the following time-homogeneous SDE
    \begin{align*}
        \dif \Tilde{Y}_{t}^{x}=\frac{b (x)}{1 + \eta_{n}^\alpha \opnorm{\nabla b (x)}} \, \dif t+\sigma(x) \, \dif B_t, \quad \Tilde{Y}_{0}^{x}=x.
    \end{align*}
    The desired result is equivalent to 
    \begin{align*}
        \abs{\left(P_{\eta_{n}} -\Tilde{Q}_{\eta_{n}}\right)f(x)}\le C \sqrt{\eta_n}(1+|x|^{2r+1})V(x) \norm{f}_\infty, \quad \forall x\in \R^{d}, \; f\in \mathcal{C}_b^2 (\R^d).
    \end{align*}
    
    By the Duhamel's principle, for any $t\ge 0$,
    \begin{equation} \label{eq:Duhamel}
        \begin{aligned}
            P_{t}f(x)-\Tilde{Q}_{t}f(x)
        &=\int_{0}^{t}  \frac{\dif}{\dif s}\Tilde Q_{t-s}(P_{s}f)(x) \, \dif s\\
        &=\int_{0}^{t} \Tilde Q_{t-s}\left(\mathcal{A}^{P}-\mathcal{A}^{\Tilde{Q}} \right)(P_{s}f)(x) \, \dif s\\
        &=\int_{0}^{t}  \E\left(\mathcal{A}^{P}-\mathcal{A}^{\Tilde{Q}} \right)(P_{s}f)(\Tilde{Y}_{t-s}^{x}) \, \dif s,
        \end{aligned}
    \end{equation}
    with $\mathcal{A}^{P}$ and $\mathcal{A}^{\Tilde{Q}}$ being the corresponding infinitesimal generator of $P_{t}$ and $\Tilde{Q}_{t}$, i.e., for any $h\in \mathcal{C}_b^2 (\R^d)$,
    \begin{align*}
        \mathcal{A}^{P}h(\cdot)&:=\lim_{t\downarrow 0}\frac{P_{t}h(\cdot)-h(\cdot)}{t}=\langle \nabla h(\cdot),b(\cdot) \rangle+\frac{1}{2}\langle \nabla^{2}h(\cdot), \sigma(\cdot)\sigma(\cdot)^{T} \rangle_{\mathrm{HS}}, \\
        \mathcal{A}^{\Tilde{Q}}h(\cdot) &:=\lim_{t\downarrow 0}\frac{\Tilde{Q}_{t}h(\cdot)-h(\cdot)}{t}=\left\langle \nabla h(\cdot), \frac{b(x)}{1+\eta_{n}^{\alpha}\opnorm{\nabla b(x)}} \right\rangle+\frac{1}{2}\langle \nabla^{2}h(\cdot), \sigma(x)\sigma(x)^{T} \rangle_{\mathrm{HS}}.
    \end{align*}
    
    We now provide the estimate of $\left|\E(\mathcal{A}^{P}-\mathcal{A}^{\Tilde{Q}} )(P_{s}f)(\Tilde{Y}_{t-s}^{x})\right|$. It follows from Lemma \ref{le:gradient}, \ref{le:onestep}, \ref{le:Ymoment} and \eqref{eq:drift} that, for any $s<t\le \eta_{n}$,
    \begin{align} \label{eq:generator1}
    \begin{split}
    &\mathrel{\phantom{=}} \abs{\E\left\langle \nabla P_{s}f(\Tilde{Y}_{t-s}^{x}),b(\Tilde{Y}_{t-s}^{x})- \frac{b(x)}{1+\eta_{n}^{\alpha}\opnorm{\nabla b(x)}} \right\rangle}\\
    &\le C\sqrt{\E \opnorm{\nabla P_{s}f(\Tilde{Y}_{t-s}^{x})}^{2} }\sqrt{ \E\left[\left( 1+|x|^{2r} + \lvert \Tilde{Y}_{t-s}^{x} \rvert^{2} \right) \lvert \Tilde{Y}_{t-s}^{x}-x \rvert^{2}\right]+ \eta_{n}^{2\alpha}(1+|x|^{2r+1})^{2}}\\
    &\le C\frac{\norm{f}_{\infty}}{\sqrt{s}}(1+|x|^{2r+1})\sqrt{\E [V(\Tilde{Y}_{t-s}^{x})^{2}]} \sqrt{\eta_{n}+\eta_{n}^{2\alpha}}\\
        &\le C\eta_{n}^{\alpha} (1+|x|^{2r+1})V(x)\frac{\|f\|_{\infty}}{\sqrt{s}}.
    \end{split}
    \end{align}
    What's more, notice that the distribution of $\Tilde{Y}_{t-s}^{x}$ is 
    \begin{align*}
        \Tilde{Y}_{t-s}^{x}\sim \mathcal{N}\left(\Tilde{\mu}_{t-s}, \Tilde{\Sigma}_{t-s}\right),
    \end{align*}
    with
    \begin{align*}
        \Tilde{\mu}_{t-s}:=x+\frac{(t-s)b(x)}{1 + \eta_{n}^\alpha \opnorm{\nabla b (x)}},\quad \text{and} \quad \Tilde{\Sigma}_{t-s}:=(t-s)\sigma(x)\sigma(x)^{T},
    \end{align*}
    and denote its probability density function by $\Tilde{p}_{t-s}$. It can be easily verified that
    \begin{align*}
        \nabla \Tilde{p}_{t-s}(y)=-\Tilde{\Sigma}_{t-s}^{-1} (y-\Tilde{\mu}_{t-s})\Tilde{p}_{t-s}(y).
    \end{align*}
    So it follows from the integration by part formula, Cauchy-Schwarz inequality, Assumption \ref{A2} and Lemma \ref{le:gradient} that for any $0< s\le t\le \eta_{n}$,
    \begin{align} \label{eq:generator2}
        \begin{split}
            &\mathrel{\phantom{=}} \abs{\E\left\langle \nabla^{2}P_{s}f(\Tilde{Y}_{t-s}^{x}), \sigma(\Tilde{Y}_{t-s}^{x})\sigma(\Tilde{Y}_{t-s}^{x})^{T}-\sigma(x)\sigma(x)^{T}  \right\rangle_{\mathrm{HS}} }\\
            &=\abs{\int_{\R^{d}} \left\langle \nabla^{2}P_{s}f(y), \sigma(y)\sigma(y)^{T}-\sigma(x)\sigma(x)^{T}  \right\rangle_{\mathrm{HS}}  \Tilde{p}_{t-s}(y) \, \dif y}\\
            &\le \abs{\int_{\R^{d}} \sum_{i,j=1}^{d} \partial_{i}P_{s}f(y) \left[\sigma(y)\sigma(y)^{T}-\sigma(x)\sigma(x)^{T}\right]_{ij}  \partial_{j}\Tilde{p}_{t-s}(y) \, \dif y }\\
            &\quad + \abs{\int_{\R^{d}} \sum_{i,j=1}^{d} \partial_{i}P_{s}f(y) \partial_{j}\left[\sigma(y)\sigma(y)^{T}-\sigma(x)\sigma(x)^{T}\right]_{ij}  \Tilde{p}_{t-s}(y) \, \dif y }\\
            &\le \int_{\R^{d}}  \abs{\nabla P_{s}f(y)} \abs{\nabla \Tilde{p}_{t-s}(y)} \oinorm{\sigma(y)\sigma(y)^{T}-\sigma(x)\sigma(x)^{T}} \dif y\\
            &\quad +\int_{\R^{d}}\abs{\nabla P_{s}f(y)} \sqrt{\sum_{i=1}^{d}\left( \sum_{j=1}^{d} \partial_{j}\left[\sigma(y)\sigma(y)^{T}\right]_{ij}\right)^{2} } \Tilde{p}_{t-s}(y) \, \dif y\\
            &\le C\frac{\|f\|_{\infty}}{\sqrt{s}}\int_{\R^{d}}  V(y)\left(\frac{|y-\Tilde{\mu}_{t-s}|\abs{y-x}}{t-s}+1 \right) \Tilde{p}_{t-s}(y) \, \dif y\\
            &\le C(1+|x|^{2r+1})V(x)\frac{\|f\|_{\infty}}{\sqrt{s}},
        \end{split}
    \end{align}
    where $[A]_{ij}$ denotes the $i$-th row and $j$-th column element of matrix $A$.
    
    Now, combining \eqref{eq:generator1} and \eqref{eq:generator2} together with \eqref{eq:Duhamel} gives us
    \begin{align*}
        \abs{P_{\eta_{n}}f(x)-\Tilde{Q}_{\eta_{n}}f(x)}
        \le C\|f\|_{\infty}\int_{0}^{\eta_{n}}  \frac{\eta_{n}^{\alpha}+1}{\sqrt{s}} \, \dif s
        \le C\sqrt{\eta_{n}}(1+|x|^{2r+1})V(x)\|f\|_{\infty},
    \end{align*}
    and the desired result follows.
\end{proof}

\subsection{Proof of main results}
Before providing the proof of main results, we first state the following technical lemma, which will be proved in Appendix \ref{appendix}.
 
\begin{lemma} \label{le:A1}
    For any $\beta \in (0, 1 / 2]$ and $c > 0$, there exists a constant $C > 0$ such that, if Assumption \ref{A3} holds with $\eta_1 < 1$ and $\theta < c \eup^{-c} / \beta$, we have
    \begin{align*}
        \sum_{k = 1}^n \eta_k^{1 + \beta} \eup^{-c (t_n - t_k)}
        \leq C \eta_n^\beta, \qquad
        \sum_{k = K_n}^{n-1} \frac{\eta_k^{1 + \beta}}{\sqrt{t_n - t_k}}
        \leq C \eta_n^\beta, \qquad
        \sum_{k = K_n}^{n-1} \frac{\eta_k^{1 + \beta}}{t_n - t_k}
        \leq C \eta_n^\beta \abs{\ln \eta_n},
    \end{align*}
    where $t_k = \sum_{i = 1}^k \eta_i$, $K_n := \min \{ k \geq 1 \colon t_n - t_k \leq 1 \}$, and $C$ depends on $\beta$, $c$, $\eta_1$, and $\theta$.
\end{lemma}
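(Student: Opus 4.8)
The plan is to reduce everything to one elementary consequence of Assumption \ref{A3}: under a non-increasing step sequence with $\eta_{i-1}-\eta_i \leq \theta\eta_i^2$ the steps cannot decay too fast, in the quantitative form
\begin{align*}
    \eta_k \leq \eta_n \eup^{\theta (t_n - t_k)}, \qquad \forall\, n \geq k \geq 1 .
\end{align*}
This follows from $\ln(\eta_{i-1}/\eta_i) = \ln\bigl(1 + (\eta_{i-1}-\eta_i)/\eta_i\bigr) \leq (\eta_{i-1}-\eta_i)/\eta_i \leq \theta\eta_i$ summed over $i = k+1,\dots,n$. Two corollaries will be used throughout: raising to the power $\beta \in (0,1/2]$ gives $\eta_k^\beta \leq \eta_n^\beta \eup^{\beta\theta(t_n-t_k)}$, and the case $n = k+1$ together with $\eta_1 < 1$ gives $\eta_k \leq \eup^\theta \eta_{k+1}$. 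Given these, each of the three bounds is a sum-to-integral comparison: by monotonicity of the relevant integrand $g$ one has $\eta_k\, g(t_n - t_k) \leq \eup^\theta \int_{t_k}^{t_{k+1}} g(t_n - s)\,\dif s$ for increasing $g$, and $\eta_k\, g(t_n - t_k) \leq \eup^{c\eta_1}\int_{t_{k-1}}^{t_k} g(t_n - s)\,\dif s$ for $g = \eup^{-c\,\cdot}$.

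For the first bound I would write $\eta_k^{1+\beta}\eup^{-c(t_n-t_k)} \leq \eta_n^\beta\, \eta_k\eup^{-c'(t_n-t_k)}$ with $c' := c - \beta\theta$, which is positive (indeed $c' > c(1-\eup^{-c})$) precisely because $\theta < c\eup^{-c}/\beta$. Using $1 - \eup^{-x} \geq x\eup^{-x}$ one checks $\eta_k\eup^{-c'(t_n-t_k)} \leq \eup^{c'\eta_1}\int_{t_{k-1}}^{t_k}\eup^{-c'(t_n-s)}\,\dif s$, and summing over $k = 1,\dots,n$ (with $t_0 = 0$) gives $\sum_{k=1}^n \eta_k^{1+\beta}\eup^{-c(t_n-t_k)} \leq \eta_n^\beta\eup^{c'\eta_1}\int_0^{t_n}\eup^{-c'(t_n-s)}\,\dif s \leq (\eup^{c'\eta_1}/c')\,\eta_n^\beta$, which is the claim with $C = \eup^{c'\eta_1}/c'$.

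For the second and third bounds, note that $k \geq K_n$ forces $t_n - t_k \leq 1$, so $\eta_k^\beta \leq \eup^{\beta\theta}\eta_n^\beta$; combined with $\eta_k(t_n - t_k)^{-1/2} \leq \eup^\theta\int_{t_k}^{t_{k+1}}(t_n-s)^{-1/2}\,\dif s$ (valid for $k \leq n-1$ by monotonicity), summation gives $\sum_{k=K_n}^{n-1}\eta_k^{1+\beta}(t_n-t_k)^{-1/2} \leq \eup^{(1+\beta)\theta}\eta_n^\beta\int_{t_{K_n}}^{t_n}(t_n-s)^{-1/2}\,\dif s = 2\eup^{(1+\beta)\theta}\sqrt{t_n - t_{K_n}}\,\eta_n^\beta \leq 2\eup^{(1+\beta)\theta}\eta_n^\beta$. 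For the third bound the corresponding integral $\int_{t_{K_n}}^{t_n}(t_n-s)^{-1}\,\dif s$ diverges, so I would split off the term $k = n-1$, which is at most $\eup^{(1+\beta)\theta}\eta_n^\beta$ since $\eta_{n-1}/\eta_n \leq \eup^\theta$, and bound the rest by $\eup^{(1+\beta)\theta}\eta_n^\beta\int_{t_{K_n}}^{t_{n-1}}(t_n-s)^{-1}\,\dif s = \eup^{(1+\beta)\theta}\eta_n^\beta\ln\bigl((t_n-t_{K_n})/\eta_n\bigr) \leq \eup^{(1+\beta)\theta}\eta_n^\beta\abs{\ln\eta_n}$; adding the two pieces and using $\abs{\ln\eta_n} \geq \abs{\ln\eta_1} > 0$ (which holds because $\eta_n \leq \eta_1 < 1$) absorbs the bounded term into $\abs{\ln\eta_n}$. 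The cases $n = 1$ are trivial since both truncated sums are then empty.

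I do not expect any genuine obstacle here; the one point needing care is the logarithmic third sum, where the singular integrand forces peeling off the final term and where uniformity of the constant relies on $\eta_1 < 1$ keeping $\abs{\ln\eta_n}$ bounded away from zero. Everything else is bookkeeping built on the single growth estimate $\eta_k \leq \eta_n\eup^{\theta(t_n-t_k)}$, so the constant $C$ indeed depends only on $\beta$, $c$, $\eta_1$, and $\theta$.
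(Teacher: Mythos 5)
Your proof is correct. For the second and third sums it follows essentially the paper's route: for $k\geq K_n$ one has $t_n-t_k\leq 1$, hence $\eta_k\leq \eup^{\theta}\eta_n$, and the sums are then controlled by a sum-to-integral comparison plus the $|\ln\eta_n|\geq|\ln\eta_1|$ trick to absorb the constant; the only cosmetic difference is that you integrate in time, $\int_{t_{K_n}}^{t_n}(t_n-s)^{-1/2}\,\dif s$ and $\int_{t_{K_n}}^{t_{n-1}}(t_n-s)^{-1}\,\dif s$ with the $k=n-1$ term peeled off, whereas the paper first uses $t_n-t_k\geq (n-k)\eta_n$ and compares with integrals in the index variable. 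For the first sum, however, your argument is genuinely different from the paper's. The paper writes $\eta_k^{1+\beta}\eup^{-c(t_n-t_k)}\leq \frac{\eup^c}{c}\eta_k^\beta[\eup^{-c(t_n-t_k)}-\eup^{-c(t_n-t_{k-1})}]$, performs a summation by parts using $\eta_{k-1}^\beta-\eta_k^\beta\leq\beta\theta\eta_k^{1+\beta}$, and closes a self-referential inequality $S\leq\frac{\eup^c}{c}(\eta_n^\beta+\beta\theta S)$, which is where the precise threshold $\theta<c\eup^{-c}/\beta$ enters. You instead absorb the factor $\eta_k^\beta$ directly through the backward growth bound $\eta_k\leq\eta_n\eup^{\theta(t_n-t_k)}$ (the same inequality the paper only uses in part (ii)), reducing the sum to $\eta_n^\beta\sum_k\eta_k\eup^{-c'(t_n-t_k)}$ with $c'=c-\beta\theta>0$, and bound that by an integral. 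This is more elementary (no bootstrap), unifies the three estimates around the single inequality $\eta_k\leq\eta_n\eup^{\theta(t_n-t_k)}$, and in fact only needs $\beta\theta<c$, a weaker hypothesis than the stated $\theta<c\eup^{-c}/\beta$; the paper's telescoping argument, by contrast, keeps the original rate $c$ throughout and never introduces the modified rate $c'$, at the price of the slightly stronger threshold. Both yield constants depending only on $\beta$, $c$, $\eta_1$, $\theta$, as required.
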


Now, we present the proofs of the main theorems of this paper,
\begin{proof}[Proof of Theorem \ref{thm1}]
To reach the desired result, we only need to prove
\begin{align*}
    \abs{\E f(X_{t_n}) - \E f(Y_{t_n})} \leq C \eta_n^\alpha \left( \norm{f}_\infty \land \oinorm{\nabla f} \right), \quad \forall n \geq 1, \; f \in \mathcal{C}_{b}^2 (\R^d).
\end{align*}
By \eqref{eq:thm1pr1}, for fixed $n \geq 1$ and $f \in \mathcal{C}_{b}^2 (\R^d)$, we have 
\begin{align*} 
    \E f(X_{t_n}) - \E f(Y_{t_n})=\sum_{k = 1}^{n} \E \left[ (P_{t_{k-1}, t_k} - Q_{t_{k-1}, t_k}) P_{t_k, t_n} f (Y_{t_{k-1}}) \right].
\end{align*}

For $k = 1, \dots, n-1$ and $g \in \mathcal{C}_{b}^2 (\R^d)$, notice that
\begin{align} \label{eq:thm1pr2}
    \begin{split}
    &\mathrel{\phantom{=}} (P_{t_{k-1}, t_k} - Q_{t_{k-1}, t_k}) g (x) \\
    &= \E \left[ g (X_{t_{k-1}, t_k}^x) - g (Y_{t_{k-1}, t_k}^x) \right] \\
    &= \E \sca{\nabla g (x), X_{t_{k-1}, t_k}^x - Y_{t_{k-1}, t_k}^x} \\
    &\quad + \E \int_0^1 \sca{\nabla g (r X_{t_{k-1}, t_k}^x + (1-r) Y_{t_{k-1}, t_k}^x) - \nabla g (x), X_{t_{k-1}, t_k}^x - Y_{t_{k-1}, t_k}^x} \dif r \\
    &= \sca{\nabla g (x), \E \Delta_{t_k}^x} + \int_0^1 \dif r \int_0^1 \E \left[ \nabla_{(\Xi_{t_k}^{x,r} - x)} \nabla_{\Delta_{t_k}^x} g ( s \Xi_{t_k}^{x,r} + (1-s) x ) \right] \dif s,
    \end{split}
\end{align}
where $\Delta_{t_k}^x := X_{t_{k-1}, t_k}^x - Y_{t_{k-1}, t_k}^x$, $\Xi_{t_k}^{x,r} := r X_{t_{k-1}, t_k}^x + (1-r) Y_{t_{k-1}, t_k}^x$, and $\{ X_{t_{k-1}, t}^x \}_{t \in [t_{k-1}, t_k]}$ and $\{ Y_{t_{k-1}, t}^x \}_{t \in [t_{k-1}, t_k]}$ satisfy
\begin{align*}
    \dif X_{t_{k-1}, t}^x &= b (X_{t_{k-1}, t}^x) \, \dif t + \sigma (X_{t_{k-1}, t}^x) \, \dif B_t, & X_{t_{k-1}, t_{k-1}}^x &= x, \\
    \dif Y_{t_{k-1}, t}^x &= \frac{b (x)}{1 + \eta_k^\alpha \opnorm{\nabla b (x)}} \, \dif t + \sigma (x) \, \dif B_t, & Y_{t_{k-1}, t_{k-1}}^x &= x.
\end{align*}

Combining Lemma \ref{le:Xmoment} and \ref{le:onestep}, we have the following estimate of $\E \abs{\Delta_{t_k}^x}^4$, $\E \abs{\Xi_{t_k}^{x,r} - x}^4$ and $\abs{\E \Delta_{t_k}^x}$, i.e.
\begin{align*}
    \E \abs{\Delta_{t_k}^x}^4
    &\leq C \eta_k^4 (1 + \abs{x}^{2r+1})^4, \\
    \E \abs{\Xi_{t_k}^{x,r} - x}^4
    &\leq C \left( \E \abs{X_{t_{k-1}, t_k}^x - x}^4 + \E \abs{Y_{t_{k-1}, t_k}^x - x}^4 \right)
    \leq C \eta_k^2 (1 + \abs{x}^{r+1})^4,
\intertext{and}
    \abs{\E \Delta_{t_k}^x}
    &= \abs{\E \int_{t_{k-1}}^{t_k} \frac{b (X_{t_{k-1}, t}^x) - b (x) + \eta_k^\alpha \opnorm{\nabla b (x)} b (X_{t_{k-1}, t}^x)}{1 + \eta_k^\alpha \opnorm{\nabla b (x)}} \, \dif t} \\
    &\leq \int_{t_{k-1}}^{t_k} \left( \E \abs{b (X_{t_{k-1}, t}^x) - b (x)} + \eta_k^\alpha \opnorm{\nabla b (x)} \E \abs{b (X_{t_{k-1}, t}^x)} \right) \dif t \\
    &\leq C \eta_k^{1 + \alpha} (1 + \abs{x}^{2r+1}).
\end{align*}
Together with Lemma \ref{le:gradient}, taking $g = P_{t_k, t_n} f$ in \eqref{eq:thm1pr2} derives that
\begin{align*}
    &\mathrel{\phantom{=}} \abs{(P_{t_{k-1}, t_k} - Q_{t_{k-1}, t_k}) P_{t_k, t_n} f (x)} \\
    &\leq \opnorm{\nabla P_{t_k, t_n} f (x)} \abs{\E \Delta_{t_k}^x} \\
    &\quad + \int_0^1 \dif r \int_0^1 \left( \E \abs{\Xi_{t_k}^{x,r} - x}^4 \right)^{\frac{1}{4}} \left( \E \abs{\Delta_{t_k}^x}^4 \right)^{\frac{1}{4}} \left( \E \opnorm{\nabla^2 P_{t_k, t_n} f (s \Xi_{t_k}^{x,r} + (1-s) x)}^2 \right)^{\frac{1}{2}} \dif s \\
    &\leq \left( \norm{f}_\infty \land \oinorm{\nabla f} \right) \left[ \frac{C \eta_k^{1 + \alpha} \eup^{-c (t_n - t_k)}}{\sqrt{(t_n - t_k) \land 1}} (1 + \abs{x}^{2r+1}) V (x) \right. \\
    &\qquad \qquad \qquad \left. + \frac{C \eta_k^{\frac{3}{2}} \eup^{-c (t_n - t_k)}}{(t_n - t_k) \land 1} (1 + \abs{x}^{3r+2}) \int_0^1 \dif r \int_0^1 \sqrt{\E \left[ V (s \Xi_{t_k}^{x,r} + (1-s) x)^3 \right]} \, \dif s \right].
\end{align*}
For $0 \leq r, s \leq 1$, Hölder's inequality and Lemma \ref{le:Xmoment}, \ref{le:Ymoment} imply
\begin{align*}
    \E \left[ V (s \Xi_{t_k}^{x,r} + (1-s) x)^3 \right]
    &\leq C V (x)^{3 (1-s)} \left\{ \E \left[ V (X_{t_{k-1}, t_k}^x)^3 \right] \right\}^{sr} \left\{ \E \left[ V (Y_{t_{k-1}, t_k}^x)^3 \right] \right\}^{s (1-r)} \\
    &\leq C V (x)^3,
\end{align*}
so we have, for $k = 1, \dots, n-1$, 
\begin{align*}
    &\mathrel{\phantom{=}} \abs{(P_{t_{k-1}, t_k} - Q_{t_{k-1}, t_k}) P_{t_k, t_n} f (x)} \\
    &\leq \left[ \frac{C \eta_k^{1 + \alpha} \eup^{-c (t_n - t_k)}}{\sqrt{(t_n - t_k) \land 1}} + \frac{C \eta_k^{\frac{3}{2}} \eup^{-c (t_n - t_k)}}{(t_n - t_k) \land 1} \right] V (x)^2 \left( \norm{f}_\infty \land \oinorm{\nabla f} \right),
\end{align*}
Since Lemma \ref{le:Ymoment} shows $\sup_{k \geq 1} \E [ V (Y_{t_{k-1}})^2 ] < + \infty$, it follows from Assumption \ref{A3} and Lemma \ref{le:A1} that
\begin{align} \label{eq:thm1pr3}
    \begin{split}
    &\mathrel{\phantom{=}} \sum_{k = 1}^{n-1} \E \abs{(P_{t_{k-1}, t_k} - Q_{t_{k-1}, t_k}) P_{t_k, t_n} f (Y_{t_{k-1}})} \\
    &\leq C \left( \norm{f}_\infty \land \oinorm{\nabla f} \right) \sum_{k = 1}^{n-1} \left[ \frac{\eta_k^{1 + \alpha} \eup^{-c (t_n - t_k)}}{\sqrt{(t_n - t_k) \land 1}} + \frac{\eta_k^{\frac{3}{2}} \eup^{-c (t_n - t_k)}}{(t_n - t_k) \land 1} \right] \E \left[ V (Y_{t_{k-1}})^2 \right] \\
    &\leq C \left( \eta_n^\alpha + \eta_n^{\frac{1}{2}} \abs{\ln \eta_n} \right) \left( \norm{f}_\infty \land \oinorm{\nabla f} \right) \\
    &\leq C \eta_n^\alpha \left( \norm{f}_\infty \land \oinorm{\nabla f} \right).
    \end{split}
\end{align}

For $k = n$, Lemma \ref{le:onestep} shows
\begin{align*}
    \abs{(P_{t_{n-1}, t_n} - Q_{t_{n-1}, t_n}) f (x)}
    \leq \oinorm{\nabla f} \E \abs{X_{t_{n-1}, t_n}^x - Y_{t_{n-1}, t_n}^x}
    \leq C \eta_n (1 + \abs{x}^{2r+1}) \oinorm{\nabla f}.
\end{align*}
Together with Lemma \ref{le:laststep} and \ref{le:Ymoment}, we have
\begin{align} \label{eq:thm1pr4}
\begin{split}
    &\E \abs{(P_{t_{n-1}, t_n} - Q_{t_{n-1}, t_n}) f (Y_{t_{n-1}})}\\
    \leq& C \eta_n^\alpha \E\left[(1+|Y_{t_{n-1}}|^{2r+1})V(Y_{t_{n-1}})\right]\left( \norm{f}_\infty \land \oinorm{\nabla f} \right)\\
    \leq& C \eta_n^\alpha \left( \norm{f}_\infty \land \oinorm{\nabla f} \right).
\end{split}
\end{align}
where the second inequality comes from the fact that $|x|^{2r+1}\le C e^{|x|}+1$.

Combining \eqref{eq:thm1pr1}, \eqref{eq:thm1pr3}, and \eqref{eq:thm1pr4}, we have
\begin{align}
|\E f(X_{t_n}) - \E f(Y_{t_n})|\leq C \eta_n^\alpha \left( \norm{f}_\infty \land \oinorm{\nabla f} \right).
\end{align}
so we have proved the desired result.
\end{proof}

\begin{proof}[Proof of Theorem \ref{thm2}]
For $k = 1, \dots, n-1$, we have
\begin{align*}
    &\mathrel{\phantom{=}} \abs{(P_{t_{k-1}, t_k} - Q_{t_{k-1}, t_k}) P_{t_k, t_n} f (x)} \\
    &= \abs{\E \left[ P_{t_k, t_n} f (X_{t_{k-1}, t_k}^x) - P_{t_k, t_n} f (Y_{t_{k-1}, t_k}^x) \right]} \\
    &= \abs{\int_0^1 \E \sca{\nabla P_{t_k, t_n} f (r X_{t_{k-1}, t_k}^x + (1-r) Y_{t_{k-1}, t_k}^x), X_{t_{k-1}, t_k}^x - Y_{t_{k-1}, t_k}^x} \dif r} \\
    &\leq \int_0^1 \sqrt{\E \opnorm{\nabla P_{t_k, t_n} f (r X_{t_{k-1}, t_k}^x + (1-r) Y_{t_{k-1}, t_k}^x)}^2 \E \abs{X_{t_{k-1}, t_k}^x - Y_{t_{k-1}, t_k}^x}^2} \, \dif r.
\end{align*}
Since $\sigma \equiv \sigma_0 \in \R^{d \times d}$, Lemma \ref{le:onestep} and \ref{le:gradient} show that
\begin{gather*}
    \E \abs{X_{t_{k-1}, t_k}^x - Y_{t_{k-1}, t_k}^x}^2
    \leq C \eta_k^{2 + 2 \alpha} ( 1 + \abs{x}^{4r+2} ), \\
    \opnorm{\nabla P_{t_k, t_n} f (\Xi_{t_k}^{x,r})}
    \leq \frac{C \eup^{-c (t_n - t_k)}}{\sqrt{(t_n - t_k) \land 1}} V (\Xi_{t_k}^{x,r}) \left( \norm{f}_\infty \land \oinorm{\nabla f} \right),
\end{gather*}
where $\Xi_{t_k}^{x,r} = r X_{t_{k-1}, t_k}^x + (1-r) Y_{t_{k-1}, t_k}^x$. So we have
\begin{align*}
    &\mathrel{\phantom{=}} \abs{(P_{t_{k-1}, t_k} - Q_{t_{k-1}, t_k}) P_{t_k, t_n} f (x)} \\
    &\leq \frac{C \eta_k^{1 + \alpha} \eup^{-c (t_n - t_k)}}{\sqrt{(t_n - t_k) \land 1}} ( 1 + \abs{x}^{r+1} ) \left( \norm{f}_\infty \land \oinorm{\nabla f} \right) \int_0^1 \sqrt{\E \left[ V (\Xi_{t_k}^{x,r})^2 \right]} \, \dif r \\
    &\leq \frac{C \eta_k^{1 + \alpha} \eup^{-c (t_n - t_k)}}{\sqrt{(t_n - t_k) \land 1}} V (x)^2 \left( \norm{f}_\infty \land \oinorm{\nabla f} \right),
\end{align*}
where in the last inequality we use the estimates in Lemma \ref{le:Xmoment} and \ref{le:Ymoment}. 

Since Lemma \ref{le:Ymoment} shows $\sup_{k \geq 1} \E [ V (Y_{t_{k-1}})^2 ] < + \infty$, it follows from Lemma \ref{le:A1} that
\begin{align} \label{eq:thm2pr2}
    \begin{split}
    &\mathrel{\phantom{=}} \sum_{k = 1}^{n-1} \E \abs{(P_{t_{k-1}, t_k} - Q_{t_{k-1}, t_k}) P_{t_k, t_n} f (Y_{t_{k-1}})} \\
    &\leq C \left( \norm{f}_\infty \land \oinorm{\nabla f} \right) \sum_{k = 1}^{n-1} \frac{\eta_k^{1 + \alpha} \eup^{-c (t_n - t_k)}}{\sqrt{(t_n - t_k) \land 1}} \E \left[ V (Y_{t_{k-1}})^2 \right] \\
    &\leq C \eta_n^\alpha \left( \norm{f}_\infty \land \oinorm{\nabla f} \right).
    \end{split}
\end{align}

For $k = n$, Lemma \ref{le:onestep} shows
\begin{align*}
    \abs{(P_{t_{n-1}, t_n} - Q_{t_{n-1}, t_n}) f (x)}
    \leq \oinorm{\nabla f} \E \abs{X_{t_{n-1}, t_n}^x - Y_{t_{n-1}, t_n}^x}
    \leq C \eta_n (1 + \abs{x}^{2r+1}) \oinorm{\nabla f}.
\end{align*}
Together with Lemma \ref{le:laststep} and \ref{le:Ymoment}, we have
\begin{align} \label{eq:thm2pr3}
\begin{split}
    &\E \abs{(P_{t_{n-1}, t_n} - Q_{t_{n-1}, t_n}) f (Y_{t_{n-1}})}\\
    \leq &C \eta_n^\alpha \E\left[(1+|Y_{t_{n-1}}|^{2r+1})V(Y_{t_{n-1}})\right]\left( \norm{f}_\infty \land \oinorm{\nabla f} \right)\\
    \leq& C \eta_n^\alpha \left( \norm{f}_\infty \land \oinorm{\nabla f} \right).
\end{split}
\end{align}
The desired result follows from \eqref{eq:thm1pr1}, \eqref{eq:thm2pr2}, and \eqref{eq:thm2pr3}.
\end{proof}

\appendix

\section{Technical lemmas}{\label{appendix}}
\begin{proof}[Proof of Lemma \ref{le:A2}]
    (i) Since $\xi \sim \mathcal{N} (\mu, \eta \Sigma)$, straightforward calculations show that
    \begin{align*}
        &\E \left[ \exp(\abs{\xi}) \mathbf{1}_{\R^d \setminus B (\mu, 1 / 3)} (\xi) \right]\\
        =& \int_{\R^d \setminus B (\mu, 1 / 3)} (2 \pi \eta)^{-\frac{d}{2}} (\det \Sigma)^{-\frac{1}{2}} \exp\left\{ \abs{x} - \frac{1}{2 \eta} \abs{ \Sigma^{-1 / 2} (x - \mu)}^2\right\} \dif x \\
        \leq& (2 \pi)^{-\frac{d}{2}} \eup^{\abs{\mu}} \int_{\R^d \setminus B (\mathbf{0}, 1 / (3\sqrt \eta \opnorm{\Sigma}^{1 / 2}))} \exp\left\{ \opnorm{\Sigma}^{1 / 2}\sqrt  \eta \abs{y} - \frac{1}{2} \abs{y}^2\right\} \dif y,\\
        =& (2 \pi)^{-\frac{d}{2}} \eup^{\abs{\mu}} \int_{\R^d \setminus B (\mathbf{0}, 1 / (3\sqrt \eta \opnorm{\Sigma}^{1 / 2}))} \exp\left\{- \frac{1}{4} (\abs{y}-2\opnorm{\Sigma}^{1 / 2}\sqrt \eta)^2+ \eta \opnorm{\Sigma}- \frac{1}{4}\abs{y}^2\right\} \dif y\\
        \leq& (2 \pi )^{-\frac{d}{2}} \eup^{\abs{\mu}+\eta \opnorm{\Sigma}} \int_{\R^d \setminus B (\mathbf{0}, 1 / (3\sqrt \eta \opnorm{\Sigma}^{1 / 2}))} \exp\left\{ -\frac14|y|^2\right\} \dif y\\
        \leq& C \exp\left\{ \abs{\mu}+\eta \opnorm{\Sigma}-C/(\eta \opnorm{\Sigma}) \right\}, 
    \end{align*}
    where in the first inequality we use the variable substitution $x = \sqrt{\eta}(\Sigma^{1/2} y + \mu)$ and the last inequality we use the formula of the tail probability of Gaussian distributions. 
    
    Since $\eta \opnorm{\Sigma} \leq 1 / 6$ and $\eup^{-C/(\eta \opnorm{\Sigma})}\leq C\eta$, we can get 
    \begin{align*}
        \E \left[ \eup^{\abs{\xi}} \mathbf{1}_{\R^d \setminus B (\mu, 1 / 3)} (\xi) \right] \leq C \eta \eup^{\abs{\mu}}.
    \end{align*} 

 (ii) Notice that
    \begin{align} \label{eq:ximu}
    \begin{split}
        \abs{\xi}^2 \abs{\mu}^2 - \sca{\xi, \mu}^2
        &= (\abs{\xi - \mu}^2 + 2 \sca{\xi - \mu, \mu} + \abs{\mu}^2) \abs{\mu}^2 - (\sca{\xi - \mu, \mu} + \abs{\mu}^2)^2\\
        &\leq \abs{\xi - \mu}^2 \abs{\mu}^2.
        \end{split}
    \end{align}
    
    Combining $\abs{\mu} \geq 2 / 3$ and $\abs{\xi - \mu} < 1 / 3$ derives that $\abs{\xi} \geq \abs{\mu}-\abs{\xi - \mu}\geq 1 / 3$ and
    \begin{align*}
        \sca{\xi, \mu}
        = \abs{\mu}^2 + \sca{\xi - \mu, \mu}
        \geq (\abs{\mu} - \abs{\xi - \mu}) \abs{\mu}
        \geq (1 / 3){\abs{\mu}} ,
    \end{align*}
    which implies that 
    \begin{align}\label{eq:ximu1}
    \abs{\xi} \abs{\mu} + \sca{\xi, \mu} \geq (2 / 3) \abs{\mu}.
    \end{align}
    
    By \eqref{eq:ximu} and \eqref{eq:ximu1}, we have
    \begin{align*}
        \abs{\xi} \abs{\mu} - \sca{\xi, \mu}
        = \frac{\abs{\xi}^2 \abs{\mu}^2 - \sca{\xi, \mu}^2}{\abs{\xi} \abs{\mu} + \sca{\xi, \mu}}
        \leq (3 /2)\abs{\xi - \mu}^2 \abs{\mu},
    \end{align*}
    which implies $\abs{\xi} \leq \sca{\xi, \mu} / \abs{\mu} + (3 / 2) \abs{\xi - \mu}^2$. 
    
    It follows that
    \begin{align} \label{eq:ximu2}
    \begin{split}
        &\E \left[ \eup^{\abs{\xi}} \mathbf{1}_{B (\mu, 1 / 3)} (\xi) \right]\\
        \leq& \E \exp \left\{ \langle \xi, \frac{\mu}{\abs{\mu}} \rangle + \frac{3}{2} \abs{\xi - \mu}^2\right\} \\
        = &\int_{\R^d} (2 \pi \eta)^{-\frac{d}{2}} (\det \Sigma)^{-\frac{1}{2}} \exp \left\{ \frac{\sca{x, \mu}}{\abs{\mu}} + \frac{3}{2} \abs{x - \mu}^2 - \frac{1}{2 \eta} \abs{\Sigma^{-\frac12} (x - \mu)}^2 \right\} \dif x \\
        \leq& \int_{\R^d} (2 \pi \eta)^{-\frac{d}{2}} \exp \left\{ - \frac{1 - 3 \opnorm{\Sigma} \eta}{2 \eta} \abs{y}^2 + \frac{\sca{y, \Sigma^{\frac12} \mu}}{\abs{\mu}} + \abs{\mu} \right\} \dif y \\
        =& (1 - 3 \opnorm{\Sigma} \eta)^{-\frac{d}{2}} \exp \left\{ \frac{\abs{\Sigma^{\frac12} \mu}^2}{2 (1 - 3 \opnorm{\Sigma} \eta) \abs{\mu}^2} \eta + \abs{\mu} \right\},
    \end{split}
    \end{align}
    where in the second inequality we use the variable substitution $x = \Sigma^{\frac12} y + \mu$. 
    
    By the fact that $\eta \opnorm{\Sigma} \leq 1 / 6$, we have 
    \begin{align*}
    1 - 3 \opnorm{\Sigma} \eta \geq 1 / 2, \text{ and } 1 - 3 \opnorm{\Sigma} \eta \geq \exp \left\{- 6 \opnorm{\Sigma} \eta \right\},
    \end{align*} 
    so combining \eqref{eq:ximu2}, we can get
    \begin{align*}
        \E \left[ \eup^{\abs{\xi}} \mathbf{1}_{B (\mu, 1 / 3)} (\xi) \right]
        \leq \exp \left\{ \abs{\mu} + (3 d + 1) \opnorm{\Sigma}\eta\right\} \leq \eup^{\abs{\mu} + C \eta}. 
    \end{align*}
So the desired result follows.
\end{proof}

\begin{proof}[Proof of Lemma \ref{ergodic}]
    (i) Since $f \in \mathcal{B}_{b} (\R^d)$, it is clear that $\|P_{t}f\|_{\infty}\le \|f\|_{\infty}$. By the fact that any $f \in \mathcal{B}_{b} (\R^d)$ can be approximated almost everywhere by a sequence of $f_i \in \mathcal{C}_{b}^{1}(\R^d)$ satisfying $\norm{f_i}_\infty \leq 2 \norm{f}_\infty$ (see, for instance \cite[Theorem 7.10, 8.14]{Folland1999}), it suffices to show that for any $t>0$ there exists a constant $C_{t}$ such that 
    \begin{align*}
        \abs{\nabla_{v} P_{t}f(x)}\le C_{t}\norm{f}_{\infty}|v|V(x),\quad \forall x,v\in \R^{d},\quad  \forall f\in \mathcal{C}_{b}^{1}(\R^d).
    \end{align*}
    As a consequence of Lemma \ref{le:BEL}, \ref{le:BEL1} and Assumption \ref{A2}, we have
    \begin{align*}
        \abs{\nabla_{v}P_{t}f(x)}
        \le \frac{1}{t} \|f\|_{\infty} \sqrt{\int_{0}^{t}\E\abs{\sigma^{-1} (X_s^x) R_s^v}^{2} \dif s }
        \le \frac{1}{\sqrt{t}}e^{Ct}\|f\|_{\infty}|v|V(x),
    \end{align*}
    which implies the continuity of $P_{t}f$.

    \noindent (ii) By the definition of the irreducibility, it suffices to show that for any $x,y\in \R^{d}$ and $T>0$, $\delta>0$, 
    \begin{align*}
        \mathbb{P}\left(|X_{T}^{x}-y|<\delta  \right)>0.
    \end{align*}
    For any fixed $\epsilon>0$ and $t_{0}\in (0,T)$, set
    \begin{align} \label{eq:defeX}
        X^{\epsilon,x}_{t_{0}}:=X_{t_{0}}^{x} \mathbf{1}_{\left\{\abs{X_{t_{0}}^{x}}\le \epsilon^{-1}\right\}}.
    \end{align}
    Since Lemma \ref{le:Xmoment} shows that $\E|X_{t_{0}}^{x}|^{2}\le e^{-\lambda t_{0}}|x|^{2}+C$, it follows from dominated convergence theorem that
    \begin{align*}
        \lim_{\epsilon \downarrow 0}\E \abs{X_{t_{0}}^{x}-X^{\epsilon,x}_{t_{0}}}^{2}=\lim_{\epsilon \downarrow 0}\E \left[\abs{X_{t_{0}}^{x}}^{2} \mathbf{1}_{\left\{ \abs{X_{t_{0}}^{x}}>\epsilon^{-1} \right\}}\right]=0.
    \end{align*}
    For $t\in [t_{0},T]$, further denote
    \begin{align} \label{eq:defbarX}
        \Bar{X}_{t}^{\epsilon,x}:=\frac{T-t}{T-t_{0}}X^{\epsilon,x}_{t_{0}} +\frac{t-t_{0}}{T-t_{0}}y,\quad \text{and} \quad 
        \Bar{b}^{\epsilon}_{t}:=\frac{y-X^{\epsilon,x}_{t_{0}}}{T-t_{0}}-b(\Bar{X}_{t}^{\epsilon,x}).
    \end{align}
    It can be easily verified that
    \begin{align*}
        \Bar{X}_{t_{0}}^{\epsilon,x}=X^{\epsilon,x}_{t_{0}}, \quad  \Bar{X}_{T}^{\epsilon,x}=y,
    \end{align*}
    and
    \begin{align*}
        \Bar{X}_{t}^{\epsilon,x} = X^{\epsilon,x}_{t_{0}}+\int_{t_{0}}^{t} \left( b(\Bar{X}_{s}^{\epsilon,x})+\Bar{b}^{\epsilon}_{s} \right) \dif s.
    \end{align*}
    Now, consider the following SDE on $[0,T]$,
    \begin{align} \label{eq:defbarY}
    \begin{split}
        \Bar{Y}_{t}^{\epsilon,x}
        &:=x+\int_{0}^{t} \left( b(\Bar{Y}_{s}^{\epsilon,x})+\Bar{b}^{\epsilon}_{s} \mathbf{1}_{\{s>t_{0}\}} \right) \dif s+ \int_{0}^{t} \sigma(\Bar{Y}_{s}^{\epsilon,x}) \dif B_{s}\\
        &=x+\int_{0}^{t} b(\Bar{Y}_{s}^{\epsilon,x}) \dif s+ \int_{0}^{t} \sigma(\Bar{Y}_{s}^{\epsilon,x}) \dif \Tilde{B}_{s},
    \end{split}
    \end{align}
    where 
    \begin{align*}
        \Tilde{B}^{\epsilon}_{t}:=B_{t}+\int_{0}^{t} \sigma^{-1} (\Bar{Y}_{s}^{\epsilon,x}) \Bar{b}^{\epsilon}_{s} \mathbf{1}_{\{s>t_{0}\}} \dif s.
    \end{align*}
    By \eqref{eq:defeX}, \eqref{eq:defbarX} and Assumption \ref{A1}, \ref{A2}, $\abs{\sigma^{-1} (\Bar{Y}_{s}^{\epsilon,x}) \Bar{b}^{\epsilon}_{s}} \le C_{\epsilon,t_{0}}$, $\forall s \in (t_0, T)$ holds for some constant $C_{\epsilon,t_{0}}$ depending on $\epsilon$ and $t_{0}$. Hence, 
    \begin{align*}
        R^{\epsilon} := \exp \left\{ \int_{0}^{T} \left\langle \sigma^{-1} (\Bar{Y}_{t}^{\epsilon,x}) \Bar{b}^{\epsilon}_{t} \mathbf{1}_{\{t>t_{0}\}}, \dif B_{s} \right\rangle -\frac{1}{2}\int_{0}^{T}  \abs{\sigma^{-1} (\Bar{Y}_{t}^{\epsilon,x}) \Bar{b}^{\epsilon}_{t} \mathbf{1}_{\{t>t_{0}\}}}^{2}\dif s \right\},
    \end{align*}
    is a martingale and $\E R^{\epsilon}=1$. It then follows from the Girsanov's theorem that
    $(\Tilde{B}^{\epsilon}_{t})_{t\in [0,T]}$ is a Brownian motion under the probability measure $R^{\epsilon}\dif \PP$ with $\PP$ denoting the probability measure corresponding to $(B_{t})_{t\in [0,T]}$. Hence, $\Bar{Y}_{t}^{\epsilon,x}$ has the same law as $X_{t}^{x}$ under $R^{\epsilon}\dif \PP$ and to prove the desired result, it suffices to show that there exist a $t_{0}$ such that
    \begin{align*}
        \mathbb{P}\left( \abs{\Bar{Y}^{\epsilon,x}_{T}-y} <\delta \right)>0.
    \end{align*}

    According to Assumption \ref{A1} and Young's inequality,
    \begin{align*}
        \sca{y - x, b (y) - b (x)}
        \leq C (1 + \abs{x}^{r + 1} \abs{y} + \abs{x} \abs{y}^{r + 1}) - \lambda (\abs{x}^{r + 2} + \abs{y}^{r + 2})
        \leq C (1 + \abs{x}^{r + 2}).
    \end{align*}
    Together with It\^o's formula and Assumption \ref{A2}, we have
    \begin{align*}
        \frac{\dif}{\dif t}\E\abs{\Bar{Y}^{\epsilon,x}_{t}-\Bar{X}^{\epsilon,x}_{t}}^{2}
        &=2\E\left\langle \Bar{Y}^{\epsilon,x}_{t}-\Bar{X}^{\epsilon,x}_{t}, b(\Bar{Y}^{\epsilon,x}_{t})-b(\Bar{X}^{\epsilon,x}_{t})  \right\rangle+ \E\|\sigma(\Bar{Y}^{\epsilon,x}_{t})\|^{2}_{\mathrm{HS}} \\
        &\le C \left( 1 + \E\abs{\Bar{X}^{\epsilon,x}_{t}}^{r+2} \right).
    \end{align*}
    It follows from \eqref{eq:defbarX} and Lemma \ref{le:Xmoment} that $\E \lvert \Bar{X}^{\epsilon,x}_{t} \rvert^{r + 2} \le \E [ ( \lvert X_{t_{0}}^{x} \rvert + \abs{y} )^{r + 2} ] \le C$, which implies
    \begin{align*}
        \E\abs{\Bar{Y}^{\epsilon,x}_{T}-\Bar{X}^{\epsilon,x}_{T}}^{2}
        \le  \E\abs{\Bar{Y}^{\epsilon,x}_{t_{0}}-\Bar{X}^{\epsilon,x}_{t_{0}}}^{2}+C(T-t_{0})
        =\E\abs{X^{x}_{t_{0}}-X^{\epsilon,x}_{t_{0}}}^{2}+C(T-t_{0}).
    \end{align*}
    Hence
    \begin{align*}
        \mathbb{P}\left(\abs{\Bar{Y}^{\epsilon,x}_{T}-y} \geq \delta  \right)
        =\mathbb{P}\left(\abs{\Bar{Y}^{\epsilon,x}_{T}-\Bar{X}^{\epsilon,x}_{T}} \geq \delta  \right)
        \le \frac{ \E\abs{X^{x}_{t_{0}}-X^{\epsilon,x}_{t_{0}}}^{2}+C(T-t_{0})}{\delta^{2}},
    \end{align*}
    where the constant $C$ does not depend on $\epsilon$ and $t_0$. Choosing $t_{0}$ sufficiently close to $T$ and $\epsilon$ sufficiently small yields that
    \begin{align*}
        \mathbb{P} \left( \abs{\Bar{Y}^{\epsilon,x}_{T}-y} \geq \delta \right) < 1.
    \end{align*}
    So the desired result follows.
\end{proof}

\begin{proof}[Proof of Lemma \ref{le:A1}]
(i) 
By simple calculation, we can obtain
\begin{align*}
    \eta_k^{1 + \beta} \eup^{-c (t_n - t_k)}
    &\leq  \eta_k^\beta \eup^{-c (t_n - t_{k})} ((\eup^{c \eta_k} - 1)/{c}) \\
    &\leq \frac{\eup^c}{c} \eta_k^\beta \eup^{-c (t_n - t_{k-1})} (\eup^{c \eta_k} - 1)\\
    &= \frac{\eup^c}{c} \eta_k^\beta \left[ \eup^{-c (t_n - t_k)} - \eup^{-c (t_n - t_{k-1})} \right],
\end{align*}
where the first inequality comes from $\eta_k \leq (\eup^{c \eta_k} - 1) / c$, and the second inequality comes from $\eup^{-c (t_n - t_k)} \leq \eup^{c - c (t_n - t_{k-1})}$.

Since $\eta_{k-1}^\beta - \eta_k^\beta\leq \beta \eta_k^{\beta - 1} (\eta_{k-1} - \eta_k)\leq \beta \theta \eta_k^{1 + \beta}$ by Assumption \ref{A3}, we have
\begin{align*}
    &\mathrel{\phantom{=}} \sum_{k = 1}^n \eta_k^{1 + \beta} \eup^{-c (t_n - t_k)} 
    \leq \frac{\eup^c}{c} \sum_{k = 1}^n \eta_k^\beta \left[ \eup^{-c (t_n - t_k)} - \eup^{-c (t_n - t_{k-1})} \right] \\
    &\qquad\qquad= \frac{\eup^c}{c} \left[\sum_{k = 1}^n \left( \eta_k^\beta \eup^{-c (t_n - t_k)} - \eta_{k-1}^\beta \eup^{-c (t_n - t_{k-1})} \right) +  \sum_{k = 1}^n \left( \eta_{k-1}^\beta - \eta_k^\beta \right) \eup^{-c (t_n - t_{k-1})}\right] \\
    &\qquad\qquad\leq \frac{\eup^c}{c} \left(\eta_n^\beta +  \beta \theta    \sum_{k = 1}^n \eta_k^{1 + \beta} \eup^{-c (t_n - t_k)}\right).
\end{align*}
Then, it follows from $\theta < c \eup^{-c} / \beta$ that
\begin{align*}
    \sum_{k = 1}^n \eta_k^{1 + \beta} \eup^{-c (t_n - t_k)}
    \leq \frac{\eta_n^\beta}{c \eup^{-c} - \beta \theta}\leq C \eta_n^\beta.
\end{align*}

(ii) By $\eta_{k-1} \leq \eta_k (1 + \theta \eta_k)$ from Assumption \ref{A3}, we know that for $K_n \leq m \leq n-1$,
\begin{align*}
    \frac{\eta_{m}}{\eta_n}
    = \prod_{k = m + 1}^n \frac{\eta_{k-1}}{\eta_k}
    \leq \prod_{k = m + 1}^n (1 + \theta \eta_k)
    \leq \prod_{k = m + 1}^n \eup^{\theta \eta_k}
    = \eup^{\theta (t_n - t_{m})}
    \leq \eup^\theta.
\end{align*}
Combining the fact that $t_n - t_k \geq (n-k) \eta_n$, we have
\begin{align*}
    \sum_{k = K_n}^{n-1} \frac{\eta_k^{1 + \beta}}{\sqrt{t_n - t_k}}
    &\leq \sum_{k = K_n}^{n-1} \frac{\eup^{\theta (1 + \beta)} \eta_n^{\frac{1}{2} + \beta}}{\sqrt{n - k}} \\
    &\leq \eup^{\theta (1 + \beta)} \eta_n^{\frac{1}{2} + \beta} \int_0^{n - K_n} \frac{1}{\sqrt{x}} \, \dif x \\
    &= 2 \eup^{\theta (1 + \beta)} \eta_n^\beta \sqrt{(n - K_n) \eta_n} \\
    &\leq 2 \eup^{\theta (1 + \beta)} \eta_n^\beta\leq C \eta_n^\beta,
\intertext{and}
    \sum_{k = K_n}^{n-1} \frac{\eta_k^{1 + \beta}}{t_n - t_k}
    &\leq \sum_{k = K_n}^{n-1} \frac{\eup^{\theta (1 + \beta)} \eta_n^\beta}{n - k} \\
    &\leq \eup^{\theta (1 + \beta)} \eta_n^\beta \left( 1 + \int_1^{n - K_n} \frac{1}{x} \, \dif x \right) \\
    &= \eup^{\theta (1 + \beta)} \eta_n^\beta \left\{ 1 + \ln [(n - K_n) \eta_n] - \ln \eta_n \right\} \\
    &\leq \eup^{\theta (1 + \beta)} \left( \frac{1}{\abs{\ln \eta_1}} + 1 \right) \eta_n^\beta \abs{\ln \eta_n}\leq C\eta_n^\beta \abs{\ln \eta_n}.  
\end{align*}
So the desired result follows.
\end{proof}
\bibliographystyle{amsplain}
\bibliography{Tamed_Euler-Maruyama_Method_for_SDEs_with_Non-globally_Lipschitz_Drift_and_multiplicative_Noise}

\end{document}